\theoremstyle{plain}
\newtheorem{thm}{Thm}[section]
\newtheorem{claim}[thm]{Claim}
\newtheorem{theorem}[thm]{Theorem}
\newtheorem{lemma}[thm]{Lemma}
\newtheorem{corollary}[thm]{Corollary}
\newtheorem{proposition}[thm]{Proposition}
\newtheorem{conjecture}[thm]{Conjecture}
\newtheorem{observation}[thm]{Observation}
\newenvironment{proof*}{\noindent\emph{Proof of the claim:}}{\hfill$\Diamond$}
\newcommand{\spto}{\ensuremath{\stackrel{s.p.}{\longrightarrow}}}
\newcommand{\Four}{\!\scriptscriptstyle -4}
\newcommand{\CK}{\!\scriptscriptstyle -k}
\newcommand{\CL}{\!\scriptscriptstyle -l}
\tikzstyle{vertex}=[circle, draw, fill=black!50,
\tikzset{->-/.style={decoration={
			markings,
			mark=at position .5 with {\arrow{>}}},postaction={decorate}}}
\tikzstyle{bigblue}=[color=blue, very thick, >=stealth]
\tikzstyle{lightblue}=[color=blue, thin, >=stealth]
\tikzstyle{bigred}=[color=red, very thick, >=stealth]
\tikzstyle{lightred}=[color=red, thin, >=stealth]
\tikzstyle{biggreen}=[color=black!30!green, very thick, >=stealth]
\tikzstyle{lightgreen}=[color=black!30!green,  thin, >=stealth]
\title{Density of $C_{\Four}$-critical signed graphs}
\date{\today} 
\author{Reza Naserasr} 
\author{Lan Anh Pham}
\author{Zhouningxin Wang}
\affil{Universit\'{e} de Paris, IRIF, CNRS, F-75006, Paris, France. 
	
	Email addresses: \{reza, lananh, wangzhou4\}@irif.fr.}
\begin{document}
\baselineskip 0.65cm

\maketitle

\abstract{ A signed bipartite (simple) graph $(G, \sigma)$ is said to be $C_{\Four}$-critical if it admits no homomorphism to $C_{\Four}$ (a negative 4-cycle) but each of its proper subgraphs does. To motivate the study of $C_{\Four}$-critical signed graphs, we show that the notion of 4-coloring of graphs and signed graphs is captured, through simple graph operations, by the notion of homomorphism to $C_{\Four}$. In particular, the 4-color theorem is equivalent to: Given a planar graph $G$, the signed bipartite graph obtained from $G$ by replacing each edge with a negative path of length 2 maps to $C_{\Four}$. 

We prove that, except for one particular signed bipartite graph on 7 vertices and 9 edges, any $C_{\Four}$-critical signed graph on $n$ vertices must have at least $\lceil\frac{4n}{3}\rceil$ edges. Moreover, we show that for each value of $n\geq 9$ there exists a $C_{\Four}$-critical signed graph on $n$ vertices with either $\lceil\frac{4n}{3}\rceil$ or $\lceil\frac{4n}{3}\rceil+1$ many edges.

As an application, we conclude that all signed bipartite planar graphs of negative girth at least $8$ map to $C_{\Four}$. Furthermore, we show that there exists an example of a signed bipartite planar graph of girth $6$ which does not map to $C_{\Four}$, showing $8$ is the best possible and disproving a conjecture of Naserasr, Rollova and Sopena.}


\section{Introduction}

A \emph{homomorphism} of a graph $G$ to a graph $H$ is a mapping of the vertices of $G$ to the vertices of $H$ such that adjacencies are preserved. The theory of graph homomorphism is a natural extension of the notion of \emph{proper coloring} where a proper $k$-coloring (of a graph $G$) can be viewed as a homomorphism (of $G$) to $K_k$. One of the key concepts in the study of proper coloring is the notion of \emph{$k$-critical graphs}. A $k$-critical graph is defined as a graph of chromatic number $k$ all whose proper subgraphs are $(k-1)$-colorable. An extension of the notion to homomorphism was proposed in 1980's by Catlin~\cite{Ca88}, but the concept had not drawn much attention until recently. Given a graph $H$, a graph $G$ is said to be \emph{$H$-critical}, if $G$ does not admit a homomorphism to $H$ but each of its proper subgraphs does.

Next to the complete graphs, the most studied graphs in the theory of homomorphism are odd cycles. It is a folklore fact that the $C_{\!\scriptscriptstyle \, 2k+1}$-coloring problem captures the $(2k+1)$-coloring problem via a basic graph operation: Given a graph $G$, let $T'_{2k-1}(G)$ be the graph obtained from $G$ by subdividing each edge into a path of length $2k-1$. Then $T'_{2k-1}(G)$ admits a homomorphism to $C_{\!\scriptscriptstyle \, 2k+1}$ if and only if $G$ is properly $(2k+1)$-colorable (see \cite{HN90}). 

One of the key directions of the study of $k$-critical graphs is to bound from below the number of edges as a function of $k$ and $n$ (the number of vertices).  Kostochka and Yancey gave a nearly tight lower bound in \cite{KY14}, almost settling a conjecture of Gallai. Observing that being a $4$-critical graph is the same as being a $C_{\!\scriptscriptstyle \, 3}$-critical graph, it follows from the special case presented in \cite{KY14C} that any $C_{\!\scriptscriptstyle \, 3}$-critical graph on $n$ vertices has at least $\lceil\frac{5n-2}{3}\rceil$ edges. Their approach is extended to the study of $C_{\!\scriptscriptstyle \, 5}$-critical graphs in \cite{DP17} and to $C_{\!\scriptscriptstyle \, 7}$-critical graphs in \cite{PS19}. In \cite{DP17}, it is proved that any $C_{\!\scriptscriptstyle \, 5}$-critical graph on $n$ vertices has at least $\lceil \frac{5n-2}{4} \rceil$ edges and they conjecture that the bound can be improved to $\lceil \frac{14n-9}{11} \rceil$. Similarly, in \cite{PS19}, it is proved that any $C_{\!\scriptscriptstyle \, 7}$-critical graph on $n$ vertices has at least $\lceil \frac{17n-2}{15} \rceil$ edges and they conjecture that the bound can be improved to $\lceil \frac{27n-20}{23} \rceil$. 

In this work, based on recent development of the theory of homomorphisms of signed graphs, we show that by replacing odd cycles with negative cycles we can fill the parity gap in this study. Then focusing on $C_{\Four}$-critical signed graphs, we show that any such signed graph on $n$ vertices must have at least $\lceil\frac{4n}{3}\rceil$ edges with a sole exception of a signed bipartite graph on 7 vertices which has only 9 edges.  

In the next section, we present the necessary terminology and the relation between colorings of graphs and homomorphisms of signed graphs to negative cycles. In Section~\ref{sec:C4Critical}, we prove our main result which is on the minimum number of edges of $C_{\Four}$-critical signed graphs. In Section~\ref{sec:construction} we introduce some techniques to build $C_{\Four}$-critical signed graphs of low edge-density which we use to conclude the tightness of our bound. Finally, in Section~\ref{sec:Planar}, we consider applications to the planar case and the relation to a bipartite analogue of Jaeger-Zhang conjecture, and discuss further directions of study.

\section{Signed graphs and homomorphisms}\label{sec:Hom}
A \emph{signed graph} $(G, \sigma)$ is a graph $G$ together with an assignment $\sigma$ of signs (i.e. $+$ or $-$) to the edges of $G$. The assignment $\sigma$ is referred to as the \emph{signature}.  When the signature is not of high importance, we may write $\hat{G}$ in place of $(G, \sigma)$. When all edges are positive (resp. negative) we write $(G, +)$ (respectively $(G,-)$). When drawing a signed graph, we use solid or blue lines to represent positive edges and dashed or red lines to represent negative edges. For underlying graphs (with no signature) we use the color gray. A signed graph $(H, \sigma')$ is said to be a \emph{subgraph} (an \emph{induced subgraph}) of $(G, \sigma)$ if $H$ is a subgraph (an induced subgraph) of $G$ and $\sigma'$ is a signature on $H$ such that for every $e \in E(H)$, we have that $\sigma'(e)=\sigma(e)$. For simplicity and with a little abuse of notation, we may write $(H, \sigma)$ in place of $(H, \sigma')$.

A \emph{switching} of a signed graph $(G, \sigma)$ at a vertex $x$ is the operation of multiplying the signs of all edges incident to $x$ by a $-$.
A \emph{switching} of $(G, \sigma)$ is a collection of switchings at each of the elements of a given set $X$ of vertices. That is equivalent to switching the signs of all edges in the edge-cut $(X, V\setminus X)$. Two signatures $\sigma_1$ and $\sigma_2$ on a graph G are said to be \emph{equivalent} if one can be obtained from the other by a switching, in which case we say $(G, \sigma_1)$ is {\em switching equivalent} to $(G, \sigma_2)$.

The sign of a structure in $(G, \sigma)$ is the product of the signs of the edges in the given structure, counting multiplicity. The sign of some structures (such as a cycle or a closed walk) is invariant under a switching, while for some other structures, such as a path, the sign may change (e.g., if a switching is done in one of the two ends of a path). Thus we may relax or restrict our use accordingly. For example, when speaking of sign of a cycle, we may refer to any equivalent signature, but when speaking of sign of a path, we are restricted to the signature at hand.
In particular, any signed cycle of length $l$ with an even number of negative edges will simply be referred to as $C_{\!\scriptscriptstyle +l}$ and when there are an odd number of negative edges it will be denoted by $C_{\CL}$. As $C_{\Four}$ is the primary subject of this work, we will use the labeling of Figure~\ref{fig:C_4} when referring to this signed graph on its own, but as a subgraph of another signed graph it will have an induced labeling.

\begin{figure}[htbp]
\centering
\begin{minipage}{.35\textwidth}
		\centering
		\begin{tikzpicture}
		[scale=.25]
			\foreach \i in {4,3,2,1}
		{
			\draw[rotate=-90*(\i-2)+45] (0, 4) node[circle, draw=black!80, inner sep=0mm, minimum size=2mm] (u_\i){\scriptsize ${u_{_{\i}}}$};
		}

		\foreach \i/\j in {1/2,2/3,3/4}
		{
			\draw  [line width=0.5mm, blue] (u_\i) -- (u_\j);
		}
		\draw  [dashed, line width=0.5mm, red] (u_4) -- (u_1);
          	\end{tikzpicture}
		\caption{$C_{\Four}$}
		\label{fig:C_4}
\end{minipage}
\end{figure}

One of the preliminary facts in the study of signed graphs is that two signatures on a graph $G$ are equivalent if and only if they induce the same set of negative cycles (see \cite{Z82}). Thus, when a class of switching equivalent signed graphs on a graph $G$ is to be considered, one may refer to a partition of cycles, or, more generally, closed walks, of $G$ into two sets: positive and negative (see \cite{NSZ21} for more). Thus we have two natural definitions of homomorphisms of signed graphs.

A (switching) \emph{homomorphism} of a signed graph $(G,\sigma)$ to a signed graph $(H,\pi)$ is a mapping of $V(G)$ and $E(G)$ respectively to $V(H)$ and $E(H)$ that preserves the adjacencies, the incidences and the signs of closed walks. When there exists a homomorphism of $(G,\sigma)$ to $(H,\pi)$, we may write $(G,\sigma) \to (H,\pi)$. We may also, equivalently, say $(G, \sigma)$ is {\em $(H, \pi)$-colorable}.

An \emph{edge-sign preserving homomorphism} of a signed graph $(G,\sigma)$ to a signed graph $(H,\pi)$ is a mapping of $V(G)$ and $E(G)$ respectively to $V(H)$ and $E(H)$ that preserves the adjacencies, the incidences, and the signs of edges. When there exists an edge-sign preserving homomorphism of $(G,\sigma)$ to $(H,\pi)$, we may write $(G,\sigma) \spto (H,\pi)$.

We note that an edge-sign preserving homomorphism is equivalent to what is known as a homomorphism of 2-edge-colored graphs in the literature.
 
The two notions of homomorphisms are connected through the following observation.

\begin{observation}\label{obs:HomDef}
Given signed graphs $(G,\sigma)$ and $(H,\pi)$, we have  $(G,\sigma) \to (H,\pi)$ if and only if there exists an equivalent signature $\sigma'$ of $\sigma$ such that $(G,\sigma') \spto (H,\pi)$.
\end{observation}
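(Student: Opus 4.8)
The plan is to prove the two implications separately, dispatching the easy direction first. Throughout I will rely on the fact quoted just above the statement, that two signatures on $G$ are equivalent if and only if they induce the same set of negative cycles (Zaslavsky~\cite{Z82}), together with the basic observation that the sign of any closed walk is the product of the signs of its edges and is therefore invariant under switching.

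For the backward direction, suppose $\sigma'$ is equivalent to $\sigma$ and let $\varphi$ be an edge-sign preserving homomorphism witnessing $(G,\sigma')\spto (H,\pi)$. Since $\varphi$ preserves the sign of each individual edge, it automatically preserves the sign of every closed walk, because that sign is just the product of the edge signs along the walk; hence $\varphi$ is already a (switching) homomorphism of $(G,\sigma')$ to $(H,\pi)$. Now because $\sigma$ and $\sigma'$ are switching equivalent, they assign the same sign to every closed walk of $G$, so $\varphi$ equally preserves the signs of closed walks of $(G,\sigma)$. Thus $(G,\sigma)\to (H,\pi)$.

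For the forward direction, let $\varphi$ be a (switching) homomorphism of $(G,\sigma)$ to $(H,\pi)$. The idea is to use $\varphi$ itself to pull back the signature of $H$: define a signature $\tau$ on $G$ by $\tau(e)=\pi(\varphi(e))$ for each edge $e$ of $G$. By construction $\varphi$ preserves edge signs from $(G,\tau)$ to $(H,\pi)$, so it suffices to show that $\tau$ is equivalent to $\sigma$, after which one takes $\sigma'=\tau$. I would verify equivalence through the negative-cycle criterion: for any cycle $C$ of $G$, the product of the $\tau$-signs along $C$ equals the product of the $\pi$-signs of the image edges $\varphi(e)$, which is precisely the sign of the closed walk $\varphi(C)$ in $(H,\pi)$; and since $\varphi$ preserves signs of closed walks, this equals the $\sigma$-sign of $C$. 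Hence $\sigma$ and $\tau$ induce the same signs on all cycles, so by Zaslavsky's characterization they are equivalent.

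The argument is in the end routine, and I do not expect a genuine obstacle: each step is a matter of unwinding the two definitions of homomorphism and using that edge-sign preservation is the stronger notion. The single point that requires care, and the only place where the structure theory of signed graphs genuinely enters, is the passage from \emph{equal sign on every cycle} to \emph{switching equivalent}, which is exactly the content of Zaslavsky's theorem invoked above.
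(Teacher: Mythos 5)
Your proof is correct. The paper itself offers no proof of this observation: it is stated as an immediate consequence of the definitions, and the authors even remark afterwards that ``in practice, we will take the condition of Observation~2.\getrefnumber{obs:HomDef} as the definition.'' So there is no paper argument to compare against; what you have done is supply the verification the paper leaves implicit, and your two steps are exactly the right ones. The backward direction rests on the fact (noted in the paper) that the sign of a closed walk is switching-invariant, so an edge-sign preserving homomorphism from any equivalent signature already preserves closed-walk signs of $(G,\sigma)$. The forward direction is the only place with real content: pulling back $\pi$ along $\varphi$ to get $\tau(e)=\pi(\varphi(e))$, and then invoking Zaslavsky's characterization (equivalence of signatures $\iff$ equal sets of negative cycles) to identify $\tau$ with a switching of $\sigma$, since $\varphi$ preserves the signs of cycles viewed as closed walks. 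That is precisely where the structure theory enters, as you say, and the reduction from ``same sign on every closed walk'' to ``same sign on every cycle'' is legitimate because cycles are closed walks. One could quibble that you should note Zaslavsky's criterion applies componentwise when $G$ is disconnected, but switching acts independently on components, so this is harmless.
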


While closely related, the two notions are also fundamentally different. In particular, for our main target $C_{\Four}$, while deciding if a signed graph $(G, \sigma)$ admits a homomorphism to it is an NP-complete problem \cite{DFMOP20}, the analogue edge-sign preserving problem becomes polynomial time through a duality presented in Theorem~\ref{thm:C4dual}.

In practice, we will take the condition of Observation~\ref{obs:HomDef} as the definition. Thus a homomorphism $\phi$ of $(G,\sigma)$ to $(H,\pi)$ consists of three parts: $\phi_1: V(G)\to \{+,-\}$, which decides for each vertex $v$ whether a switching is done at $v$, $\phi_2: V(G)\to V(H)$ which decides to which vertex of $H$ the vertex $v$ is mapped to, and $\phi_3: E(G)\to E(H)$ which decides the image of each edge. However, as we will consider only simple graphs in this work, $\phi_3$ is induced by $\phi_2$ and, therefore, the mapping $\phi$ is composed of $\phi_1$ and $\phi_2$, i.e. $\phi=(\phi_1, \phi_2)$. We note that since switching at $X$ is the same as switching at $V\setminus X$, the two mappings $(\phi_1, \phi_2)$ and $(-\phi_1, \phi_2)$ are identical. 

Each of these notions leads to a corresponding notion of \emph{isomorphism}, that is a homomorphism $\phi$ where $\phi_2$ and $\phi_3$ are one-to-one and onto. This, furthermore, leads to two notions of automorphism. Thus, for example, the signed graph $C_{\Four}$, as a $2$-edge-colored graph, has only one non-trivial automorphism which is $u_1 \leftrightarrow u_4$, $u_2 \leftrightarrow u_3$. However, it is both vertex-transitive and edge-transitive with respect to the notion of  (switching) homomorphism. It will be clear from the context which notion of isomorphism or automorphism we refer to. 
Following this notion of (switching) isomorphism, if $(H, \sigma')$ is a subgraph of $(G, \sigma')$,  then we may also refer to $(H, \sigma')$ as a subgraph of $(G, \sigma)$ whenever $(G, \sigma)$ is equivalent to $(G, \sigma')$.

Given a signed graph $(G, \sigma)$ and an element $ij\in \mathbb{Z}^2_2$, we define $g_{ij}(G,\sigma)$ to be the length of a shortest closed walk $W$ whose number of negative edges modulo $2$ is $i$  and whose length modulo $2$ is $j$. When there exists no such closed walk, we define $g_{ij}(G, \sigma)=\infty$. The smaller of $g_{10}(G, \sigma)$ and $g_{11}(G, \sigma)$ is the length of a shortest negative cycle of $(G, \sigma)$ and is called the \emph{negative girth} of $(G, \sigma)$. 

By the definition of homomorphisms of signed graphs, we have the following no-homomorphism lemma.
 
\begin{lemma}\label{lem:no-hom}{\rm [The no-homomorphism lemma]}
If $(G, \sigma)\to (H, \pi)$, then $$g_{ij}(G, \sigma)\geq g_{ij}(H, \pi)$$ for each $ij\in \mathbb{Z}^2_2$. 
\end{lemma}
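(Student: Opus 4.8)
The plan is to reduce to the edge-sign preserving setting via Observation~\ref{obs:HomDef}, where the statement becomes transparent, and then transfer back using the switching-invariance of the parameters $g_{ij}$.

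First I would observe that each $g_{ij}$ is a switching invariant of the signed graph. Indeed, $g_{ij}(G,\sigma)$ is determined entirely by the lengths and signs of the closed walks of $(G,\sigma)$: the index $j$ records the length modulo $2$, which is clearly unaffected by switching, while the index $i$ records the number of negative edges modulo $2$, which is precisely the sign of the closed walk in the $\mathbb{Z}_2$ sense. Since the sign of a closed walk is invariant under switching, for any signature $\sigma'$ equivalent to $\sigma$ we have $g_{ij}(G,\sigma')=g_{ij}(G,\sigma)$ for every $ij\in\mathbb{Z}^2_2$.

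Next, by Observation~\ref{obs:HomDef}, the hypothesis $(G,\sigma)\to(H,\pi)$ provides a signature $\sigma'$ equivalent to $\sigma$ together with an edge-sign preserving homomorphism $\varphi\colon (G,\sigma')\spto(H,\pi)$. I would fix an index $ij$ and assume $g_{ij}(G,\sigma')<\infty$, since otherwise the inequality holds trivially. Let $W=v_0e_1v_1\cdots e_\ell v_0$ be a shortest closed walk of $(G,\sigma')$ attaining this value, so that $\ell=g_{ij}(G,\sigma')$, the parity of its length is $j$, and the parity of its number of negative edges is $i$. Applying $\varphi$ termwise produces $\varphi(v_0)\varphi(e_1)\varphi(v_1)\cdots\varphi(e_\ell)\varphi(v_0)$, which is again a closed walk of $(H,\pi)$; because $\varphi$ preserves incidences it has the same length $\ell$, and because $\varphi$ preserves the signs of edges it has the same number of negative edges. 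Hence the image is a closed walk of $(H,\pi)$ with length parity $j$ and negative-edge parity $i$, giving $g_{ij}(H,\pi)\le \ell = g_{ij}(G,\sigma')$.

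Combining the two steps yields $g_{ij}(H,\pi)\le g_{ij}(G,\sigma')=g_{ij}(G,\sigma)$, as required. There is no serious obstacle here; the only point demanding care is the bookkeeping of the two invariants under switching, so that the parameters computed from the working signature $\sigma'$ genuinely coincide with those of the original $\sigma$. I would also make the elementary but essential remark explicit, namely that the image of a closed walk under a homomorphism is once more a closed walk of the same length, which is the single place where the incidence-preserving (rather than merely adjacency-preserving) nature of a homomorphism is invoked.
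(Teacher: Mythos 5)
Your proof is correct, and it is essentially the argument the paper intends: the paper states the lemma as an immediate consequence of the definition of homomorphism (which preserves lengths and signs of closed walks), and your write-up---reducing to an edge-sign preserving map via Observation~\ref{obs:HomDef} and checking that each $g_{ij}$ is a switching invariant---is just the careful expansion of that one-liner using the paper's own ``practical'' definition. The forward image of a shortest closed walk, preserving length and negative-edge parity, is exactly the intended mechanism.
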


We note that, algorithmically, it is not difficult to determine $g_{ij}(G, \sigma)$. We refer to \cite{NSZ21} and \cite{CNS20} for more on this.

We may now define the main notion of study in this work.

\subsection{$(H, \pi)$-critical signed graphs}

Given a signed graph $(H, \pi)$, a signed graph $(G, \sigma)$ is said to be \emph{$(H, \pi)$-critical} if the followings are satisfied:

\begin{itemize}
\item $g_{ij}(G, \sigma)\geq g_{ij}(H, \pi)$ for each $ij\in \mathbb{Z}^2_2$, (conditions of the no-homomorphism lemma), 
\item $(G,\sigma) \not \to (H,\pi)$,
\item $(G',\sigma) \to (H,\pi)$ for every proper subgraph $G'$ of $G$. 
\end{itemize}

This notion captures and extends the notion of $k$-critical graphs as follows: a graph $G$ is $k$-critical if the signed graph $(G, -)$ is $(K_{k-1},-)$-critical. Here the condition of the no-homomorphism lemma implies that $G$ has no loop. The notion of $H$-critical graph is also captured by viewing $H$ as the signed graph $(H,-)$ but with a minor revision.  If $G$ is an $H$-critical graph in the sense of \cite{Ca88} and it has an odd cycle $C_{\!\scriptscriptstyle \, 2k+1}$, where $\text{odd-girth}(H) > 2k+1$, then $G$ is the odd cycle $C_{\!\scriptscriptstyle \, 2k+1}$. Our first condition then eliminates these trivial cases.

For the particular case when $(H, \pi)= C_{\CL}$, we identify two cases based on the parity of $l$:

\begin{itemize}
\item $l=2k+1$. We recall that $g_{_{10}}(C_{\CL})=g_{_{01}}(C_{\CL})=\infty$. Thus if a signed graph $(G, \sigma)$ satisfies the conditions of the no-homomorphism lemma, then it must be switching equivalent to $(G, -)$. After a switching of $(G, \sigma)$ to $(G, -)$ and $C_{\CL}$ to $(C_{\!\scriptscriptstyle \, 2k+1}, -)$, the problem is then reduced to the study of $C_{\!\scriptscriptstyle \, 2k+1}$-critical graphs (of odd-girth at least $2k+1$).

\item $l=2k$. As $g_{_{01}}(C_{\CL})=g_{_{11}}(C_{\CL})=\infty$, in order for $(G, \sigma)$ to satisfy the conditions of the no-homomorphism lemma, $G$ must, in particular, be bipartite. This is the case of main interest in this work. 
\end{itemize}

We note that in the first case, to determine if $(G, \sigma)$ is switching equivalent to $(G, -)$ can be done in polynomial time and quite efficiently, but to determine if $G\to C_{\!\scriptscriptstyle \, 2k+1}$ is an NP-complete problem. In contrast, in the second case, to find an equivalent signature under which we can map $(G, \sigma)$ to $C_{\!\scriptscriptstyle \, -2k}$ is the hard part, and given a fixed signature, we can determine, in polynomial time, if there exists an edge-sign preserving homomorphism (see Theorem~\ref{thm:C4dual} and \cite{CNS20}).

\subsection{$l$-coloring and $C_{\CL}$-coloring}\label{sec:CL}

Given a signed graph $(G, \sigma)$, we define $T_{l}(G, \sigma)$ to be the signed graph $(G_l, \pi)$ where $G_l$ is obtained from $G$ by subdividing each edge so to become a path of length $l$ and $\pi$ is an assignment of signs on the edges of $G_l$ so that the sign of the $u-v$ path, corresponding to the edge $uv\in E(G)$, is the same as $-\sigma(uv)$.

The following lemma then shows the importance of the study of $C_{\CL}$-coloring.

\begin{lemma}\label{lem:T_k-2}
A graph $G$ is $l$-colorable if and only if $T_{l-2}(G, +)$ is $C_{\CL}$-colorable. Moreover, $G$ is $(l+1)$-critical if and only if $T_{l-2}(G, +)$ is $C_{\CL}$-critical.
\end{lemma}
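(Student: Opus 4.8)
The plan is to prove the two assertions separately, reducing both to a single routing fact about walks in the cycle $C_l$ underlying $C_{\CL}$. Throughout, realise $C_{\CL}$ on vertex set $\mathbb{Z}_l$ with consecutive adjacencies and exactly one negative edge, say $\{l-1,0\}$, and write a homomorphism to $C_{\CL}$ as $\phi=(\phi_1,\phi_2)$, with $\phi_1$ recording the switch at each vertex and $\phi_2$ the position in $\mathbb{Z}_l$. The \emph{odd case} $l=2k+1$ I would dispose of immediately: as noted in Section~\ref{sec:CL}, switching $C_{\CL}$ to $(C_l,-)$ and $T_{l-2}(G,+)$ to $(T'_{l-2}(G),-)$ turns the problem into ordinary graph homomorphism, so the first assertion becomes exactly the folklore equivalence ``$T'_{l-2}(G)\to C_l$ iff $G$ is $l$-colourable'' of \cite{HN90}. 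Hence I focus on the even case $l=2k$.

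The technical core is the following routing fact for a single subdivision path $P_{xy}$ (which carries sign $-$, since $\sigma=+$). The product of the image-edge signs along $P_{xy}$ equals $-\phi_1(x)\phi_1(y)$ (the path-sign $-$ times the endpoint switches, the internal switches cancelling in pairs); thus an assignment of the endpoints extends over the internal vertices if and only if there is a walk $W$ of length $l-2$ in $C_l$ from $\phi_2(x)$ to $\phi_2(y)$ with $\mathrm{sign}(W)=-\phi_1(x)\phi_1(y)$, where $\mathrm{sign}(W)=(-1)^{\text{winding}}$ and the internal switches can then be chosen freely on this tree-path to match. A short displacement/parity analysis of length-$(l-2)$ walks in $C_l$ (net displacement $D$ with $|D|\le l-2$, $D\equiv l\pmod 2$, $D\equiv\phi_2(y)-\phi_2(x)\pmod l$, realised by the ``short'' and the ``long'' way around the cycle) then shows that such a $W$ exists precisely when the endpoints receive \emph{different colours}, where the colour of a branch vertex $x$ is defined to be the pair $(\phi_1(x),\phi_2(x))$. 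Since $l-2$ is even, all branch vertices of $T_{l-2}(G,+)$ lie in one part of its bipartition, so $\phi_2$ sends them into a single part of $C_l$ of size $l/2$; the pair $(\phi_1(x),\phi_2(x))$ therefore takes exactly $2\cdot(l/2)=l$ values.

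Granting this, the first assertion follows with no global compatibility condition, because distinct subdivision paths share only branch vertices: a homomorphism restricts to an $l$-colouring of the branch vertices in which adjacent vertices differ, and conversely, from any proper $l$-colouring one fixes $(\phi_1,\phi_2)$ on the branch vertices and routes each path independently by the routing fact. This gives $G$ is $l$-colourable $\iff T_{l-2}(G,+)\to C_{\CL}$. For the \emph{moreover} part I would use this equivalence to equate ``$T_{l-2}(G,+)\not\to C_{\CL}$'' with ``$G$ not $l$-colourable,'' and then match the two minimality conditions through the observation that deleting one edge $\epsilon$ of a subdivision path $P_{xy}$ decomposes $T_{l-2}(G,+)-\epsilon$ into $T_{l-2}(G-xy,+)$ together with one or two pendant paths. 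As $C_{\CL}$ has minimum degree $2$, a homomorphism always extends along a pendant (tree) path, so $T_{l-2}(G,+)-\epsilon\to C_{\CL}$ iff $G-xy$ is $l$-colourable. Since every proper subgraph of $T_{l-2}(G,+)$ omits some such $\epsilon$, while every proper subgraph of $G$ sits inside some $G-xy$ (vertex deletions reducing to edge deletions as usual), the two criticality conditions coincide.

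It remains to check, for the direction ``$G$ is $(l+1)$-critical $\Rightarrow T_{l-2}(G,+)$ is $C_{\CL}$-critical,'' that $T_{l-2}(G,+)$ satisfies the no-homomorphism-lemma inequalities. A cycle of length $g$ in $G$ lifts to a closed walk of length $g(l-2)$ and sign $(-1)^g$; from this one reads off that $T_{l-2}(G,+)$ is bipartite when $l$ is even and switching-equivalent to the all-negative signature when $l$ is odd, so in each case the two ``forbidden'' parities give $g_{ij}=\infty$ exactly as for $C_{\CL}$, while the shortest negative cycle has length $\text{odd-girth}(G)\cdot(l-2)\ge 3(l-2)\ge l$, matching the negative girth $l$ of $C_{\CL}$. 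I expect the main obstacle to be the routing fact of the second paragraph: the colour encoding is genuinely parity-dependent---for even $l$ it must fuse the switch bit $\phi_1$ with the halved position, a feature invisible in the classical odd case---and the winding-number bookkeeping must be carried out carefully so that ``joinable by a length-$(l-2)$ walk of the prescribed sign'' matches ``different colour'' exactly.
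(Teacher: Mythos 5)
Your proposal is correct and follows essentially the same route as the paper's proof: the odd case is delegated to the folklore reduction of \cite{HN90}, the even case encodes colours as pairs $(\phi_1(x),\phi_2(x))$ of switch and position (giving exactly $l$ colours on the branch vertices) and shows that a subdivision path extends if and only if its endpoints receive distinct colours, and the moreover part matches criticality via edge deletion together with extension along pendant paths. Your ``routing fact'' merely unifies the paper's two-case analysis (same position with opposite switch, versus distinct positions) into a single walk-sign argument, and you are in fact slightly more thorough than the paper in explicitly verifying the $g_{ij}$ no-homomorphism conditions required by the definition of $C_{\CL}$-critical.
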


\begin{proof}
For the first part we consider two cases based on the parity of $l$. If $l$ is an odd number, then in $T_{l-2}(G, +)$ a cycle is negative if and only if it is of odd length. Thus, this signed graph is switching equivalent to $(G_{l-2}, -)$. Then, the problem of mapping $T_{l-2}(G, +)$ to $C_{\CL}$ is reduced to a graph homomorphism problem of mapping $G_{l-2}$ to $C_{\!\scriptscriptstyle \, l}$. The equivalence then can be easily checked and we refer to \cite{HN90} for a proof.

We now assume that $l=2k$ is an even number, in which case $T_{l-2}(G, +)$ is a signed bipartite graph. 

We first show that if $T_{l-2}(G, +)\to C_{\CL}$, then $G$ is $l$-colorable. Since this can be done independently on each connected component of $G$, we may assume $G$ is connected. Observe that, as a signed graph equipped with switching, $C_{\CL}$ is both vertex-transitive and edge-transitive.
Let $x_1,x_2, \ldots x_{2k}$ be the vertices of $C_{\CL}$ in the cyclic order. Let $X_1=\{x_1, x_3, \ldots x_{2k-1}\}$ and $X_2=\{x_2, x_4, \ldots x_{2k}\}$ be the two parts of $C_{\CL}$. Let $\phi$ be a homomorphism of $T_{l-2}(G, +)$ to $C_{\CL}$. Observe that as $G$ (and therefore $T_{l-2}(G, +)$) is connected, the mapping $\phi$ preserves the bipartition of $T_{l-2}(G, +)$. Thus we may assume, without loss of generality, that the vertices of $T_{l-2}(G, +)$ which correspond to the vertices of $G$ map to the vertices in $X_1$.  Furthermore, recall that the homomorphism $\phi$ consists of two components $\phi_1:V(T_{l-2}(G, +)) \to \{+,-\}$, and $\phi_2: V(T_{l-2}(G, +)) \to X_1\cup X_2$. Thus the restriction of $\phi$ onto $V(G)$ is a mapping to the set $\{+,-\}\times X_1$ which is of order $2k$. We claim that $\phi$ is a proper coloring of $G$. That is simply because if $\phi$ maps two adjacent vertices to the same element of $\{+,-\}\times X_1$, the negative $(l-2)$-path  that is connecting them in $T_{l-2}(G, +)$ is mapped to a negative closed walk of length at most $l-2$, but that contradicts the no-homomorphism lemma.

The converse then is easier. Assume $\chi(G)\leq 2k$ and let $\psi$ be a $2k$-coloring of $G$ where $\{+,-\}\times X_1$ is the color set. Therefore the coloring $\psi$ can be viewed as $\psi=(\psi_1,\psi_2)$ where $\psi_1: V(G)\to \{+,-\}$ and $\psi_2: V(G)\to X_1$. We claim that $\psi$ can be extended as a homomorphism of $T_{l-2}(G, +)$ to $C_{\CL}$. For any edge $uv$ in $G$, noting that $\psi(u)=\psi(v)$ is not possible because $\psi$ is a proper coloring, we consider two possibilities: 
\begin{itemize}
\item  $(\psi_1(u), \psi_2(u))=(-\psi_1(v), \psi_2(v))$. The mapping $\psi$ then has applied a switching only in one end of the $u-v$ path, and thus switches it to a positive (even) path. After identifying its end points the resulting positive even cycle can be mapped to just an edge of any sign.

\item $\psi_2(u)\neq \psi_2(v)$. The two $\psi_2(u)-\psi_2(v)$ paths in $C_{\CL}$ are even, exactly one is negative, and each has length at most $l-2$. The $u-v$ path then can be mapped to the path of the same sign where the sign is taken after applying possible switching by $\psi_1$ at its end points.
\end{itemize}
For the moreover part, first we assume $G$ is $(l+1)$-critical. We need to show that $T_{l-2}(G, +)$ is $C_{\CL}$-critical. Let $e$ be an edge of  $T_{l-2}(G, +)$ and assume it is on the path corresponding to the edge $uv$ of $G$. Then since $G$ is critical, $G-uv$ admits an $l$-coloring which can be transformed into a mapping of  $T_{l-2}(G-uv, +)$ to $C_{\CL}$. This mapping could then be extended to the remaining vertices of the corresponding $uv$-path. Conversely, assuming  $T_{l-2}(G, +)$ is $C_{\CL}$-critical, we need to show that $G$ is $(l+1)$-critical. This follows from the fact that $T_{l-2}(G-uv, +)$ is a proper subgraph of $T_{l-2}(G, +)$ for any edge $uv$ and the first part of the theorem.
\end{proof}

\begin{corollary}\label{cor:T_2}
A graph $G$ is 4-colorable if and only if $T_2(G, +)$ maps to $C_{\Four}$.  
\end{corollary}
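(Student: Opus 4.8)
The plan is to derive this statement as the special case $l=4$ of Lemma~\ref{lem:T_k-2}. Setting $l=4$ we have $l-2=2$, so the subdivision operator $T_{l-2}$ is precisely $T_2$; and since the even case of the lemma writes $l=2k$, here $k=2$, so the target $C_{\CL}=C_{\scriptscriptstyle -l}$ becomes the negative $4$-cycle, which is exactly $C_{\Four}$. The first assertion of the lemma, ``$G$ is $l$-colorable if and only if $T_{l-2}(G,+)$ is $C_{\CL}$-colorable,'' then reads verbatim as ``$G$ is $4$-colorable if and only if $T_2(G,+)$ is $C_{\Four}$-colorable,'' and by definition being $C_{\Four}$-colorable is the same as admitting a homomorphism to $C_{\Four}$. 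Thus nothing beyond invoking the lemma is required.

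Should one prefer to see the argument unwound in this concrete case, I would simply specialize the two directions of the lemma's even-case proof. Since $\sigma\equiv+$, the construction $T_2(G,+)$ replaces each edge $uv$ of $G$ by a \emph{negative} path of length $2$, yielding a signed bipartite graph. In the forward direction, a proper $4$-coloring of $G$ is interpreted as a map into the $4$-element set $\{+,-\}\times X_1$, where $X_1=\{x_1,x_3\}$ is one part of $C_{\Four}$, and each negative $2$-path is routed to an appropriate positive or negative connection in $C_{\Four}$ as in the lemma. Conversely, a homomorphism $\phi=(\phi_1,\phi_2)$, restricted to $V(G)$ (after normalizing so that the branch vertices map into $X_1$), takes values in $\{+,-\}\times X_1$; this restriction is a proper coloring because two adjacent vertices receiving the same value would force their connecting negative $2$-path onto a negative closed walk of length at most $2$, contradicting Lemma~\ref{lem:no-hom}.

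Because the statement is a direct instance of Lemma~\ref{lem:T_k-2}, there is no genuine obstacle here; the only ``work'' is the notational check that $l=4$ produces the objects $T_2$ and $C_{\Four}$, which is recorded above. I would therefore expect the published proof to consist of a single sentence pointing to the lemma.
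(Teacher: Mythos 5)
Your proposal is correct and matches the paper exactly: the corollary is stated immediately after Lemma~\ref{lem:T_k-2} with no separate proof, being precisely its instantiation at $l=4$ (so $T_{l-2}=T_2$ and $C_{\CL}=C_{\Four}$). The notational check you record is all that is needed.
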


In particular, the 4-Color Theorem can be restated as:

\begin{theorem}\label{thm:4CT-restated}{\rm [The 4CT restated]} 
	For any planar graph $G$, the signed bipartite planar graph $T_2(G, +)$ maps to $C_{\Four}$. 
\end{theorem}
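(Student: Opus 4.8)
The plan is to deduce this statement immediately from Corollary~\ref{cor:T_2} together with the classical 4-Color Theorem, and then to observe that the implication in fact runs in both directions, which is what justifies calling the statement a restatement of the 4CT.

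First I would dispose of the structural claims packaged in the phrase ``signed bipartite planar graph''. Since $T_2(G,+)$ is obtained from $G$ by subdividing each edge exactly once (replacing each $uv$ by a path of length $2$), planarity is preserved: any planar embedding of $G$ extends to one of $T_2(G,+)$ simply by placing the new subdivision vertex on the corresponding arc. Moreover $T_2(G,+)$ is bipartite, with one part consisting of the vertices inherited from $G$ and the other of the newly introduced subdivision vertices; every edge of $T_2(G,+)$ joins a vertex of $V(G)$ to a subdivision vertex, so the underlying graph has no odd closed walk. (Bipartiteness is in any case forced, since $g_{01}(C_{\Four})=g_{11}(C_{\Four})=\infty$, so by the no-homomorphism Lemma~\ref{lem:no-hom} any signed graph admitting a homomorphism to $C_{\Four}$ must be bipartite.)

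For the homomorphism itself, I would invoke the 4-Color Theorem, which asserts that every planar graph $G$ is $4$-colorable. Corollary~\ref{cor:T_2}---the instance $l=4$, $k=2$ of Lemma~\ref{lem:T_k-2}---states that $G$ is $4$-colorable if and only if $T_2(G,+)$ maps to $C_{\Four}$. Applying its forward direction to the $4$-colorability of $G$ yields $T_2(G,+)\to C_{\Four}$, as required. To make the equivalence explicit I would then note the converse: granting this theorem, for every planar $G$ one has $T_2(G,+)\to C_{\Four}$, and the backward direction of Corollary~\ref{cor:T_2} recovers a $4$-coloring of $G$, so the theorem implies the 4CT. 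I expect no genuine obstacle in this argument: the entire difficulty is absorbed into the 4-Color Theorem, which is used here as a black box, and the only mathematical content supplied by us is Corollary~\ref{cor:T_2}. The interest of the statement is therefore conceptual rather than technical, recasting the 4CT as a homomorphism (equivalently, a signed-graph coloring) problem and thereby motivating the study of $C_{\Four}$-critical signed graphs.
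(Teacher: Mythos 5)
Your proposal is correct and matches the paper's intent exactly: the paper presents Theorem~\ref{thm:4CT-restated} as an immediate consequence of Corollary~\ref{cor:T_2} (the $l=4$ case of Lemma~\ref{lem:T_k-2}) combined with the 4-Color Theorem, which is precisely your argument. Your additional remarks on planarity, bipartiteness, and the converse direction (justifying the word ``restated'') are accurate and consistent with the paper's framing.
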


Observing that, for a graph $G$, the shortest (negative) cycle in $T_2(G, +)$ is of length at least $6$, (corresponding to a triangle of $G$), and introducing the bipartite analogue of Jaeger-Zhang conjecture, Naserasr, Rollova and Sopena \cite{NRS15} conjectured that any signed bipartite planar graph whose negative girth is at least 6 maps to $C_{\Four}$. In section \ref{sec:Planar}, we disprove this conjecture. However, as an application of our work we prove that if the condition on negative girth is increased to 8, then the result holds.

\section{$C_{\Four}$-critical signed graphs}\label{sec:C4Critical}

It follows from Corollary~\ref{cor:T_2} that the $C_{\Four}$-coloring problem is an NP-complete problem (see \cite{BFHN17}, and \cite{BS18}, for more on this subject). However, when edge-sign preserving homomorphisms are considered, we have a simple duality theorem (given in \cite{CNS20} and based on  Figure~\ref{fig:C4dual}) that makes it rather easy to determine the existence of an edge-sign preserving homomorphism to $C_{\Four}$. This duality notion will be used in our proofs.

\begin{figure}[htbp]
\centering

\begin{minipage}{.25\textwidth}
		\centering
		\begin{tikzpicture}
		[scale=.25]
			\foreach \i in {4,3,2,1}
		{
			\draw[rotate=-90*(\i-2)+45] (0, 4) node[circle, draw=black!80, inner sep=0mm, minimum size=2.4mm] (v_\i){};
		}

		\draw  [line width=0.5mm, blue] (v_1) -- (v_4);
	
		\draw  [dashed, line width=0.5mm, red] (v_1) -- (v_2);

		\draw  [dashed, line width=0.5mm, red] (v_3) -- (v_4);
          	\end{tikzpicture}
		
\end{minipage}
\begin{minipage}{.1\textwidth}
		\centering
		\begin{tikzpicture}
		[scale=.25]
		\draw [very thick, -> ] (0,-2) --(5,-2);
        \draw [very thick ] (3,-1.5) --(2,-2.5);
        \draw (3, -4) node {\tiny Sign Preserving};      
         \end{tikzpicture}
\end{minipage}
\begin{minipage}{.25\textwidth}
		\centering
		\begin{tikzpicture}
		[scale=.25]
			\foreach \i in {4,3,2,1}
		{
			\draw[rotate=-90*(\i-2)+45] (0, 4) node[circle, draw=black!80, inner sep=0mm, minimum size=2mm] (u_\i){\scriptsize $u_{_{\i}}$};
		}

		\foreach \i/\j in {1/2,2/3,3/4}
		{
			\draw  [line width=0.5mm, blue] (u_\i) -- (u_\j);
		}
		\draw  [dashed, line width=0.5mm, red] (u_4) -- (u_1);
          	\end{tikzpicture}
		
\end{minipage}
\caption{$C_{\Four}$ and its edge-sign preserving dual}
		\label{fig:C4dual}
\end{figure}

\begin{theorem}\label{thm:C4dual}\cite{CNS20}
Given a signed bipartite graph $(G, \sigma)$, we have $(G, \sigma) \spto C_{\Four}$ if and only if $(P_4, \pi) \stackrel{s.p.}{\centernot\longrightarrow} (G,\sigma)$ where $(P_4, \pi)$ is the signed path of length 3 given in Figure~\ref{fig:C4dual}.
\end{theorem}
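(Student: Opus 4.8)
The plan is to prove the two implications separately, treating the ``only if'' direction as routine and the ``if'' direction as the substantive one. Throughout I use the labeling of Figure~\ref{fig:C_4}, so that the positive edges of $C_{\Four}$ are $u_1u_2$, $u_2u_3$, $u_3u_4$ and the unique negative edge is $u_1u_4$.

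For the ``only if'' direction I would argue the contrapositive: if $(P_4,\pi)\spto(G,\sigma)$, then $(G,\sigma)$ does not map to $C_{\Four}$. First I would check by a short case analysis that $(P_4,\pi)$ itself admits no edge-sign preserving homomorphism to $C_{\Four}$. Reading the edge-signs of $(P_4,\pi)$ along the path as negative, positive, negative, any edge-sign preserving map must send the first (negative) edge to $u_1u_4$; the forced positive step from $u_1$ (resp.\ $u_4$) then lands at $u_2$ (resp.\ $u_3$), a vertex incident to no negative edge, so the third (negative) edge has nowhere to go. Hence $(P_4,\pi)$ does not map to $C_{\Four}$, and since edge-sign preserving homomorphisms compose, no $(G,\sigma)$ admitting $(P_4,\pi)\spto(G,\sigma)$ can satisfy $(G,\sigma)\spto C_{\Four}$.

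For the ``if'' direction the key step is to reformulate the hypothesis combinatorially. Let $N$ denote the set of vertices of $G$ incident to at least one negative edge. I claim that $(P_4,\pi)\spto(G,\sigma)$ holds precisely when some positive edge of $(G,\sigma)$ has both endpoints in $N$: a homomorphic image of $(P_4,\pi)$ is exactly a walk $p$--$q$--$r$--$s$ with edge-signs negative, positive, negative, which exists if and only if there is a positive edge $qr$ with $q,r\in N$ (take $p,s$ to be negative-neighbours of $q,r$). Thus the assumption that $(P_4,\pi)$ has no edge-sign preserving homomorphism to $(G,\sigma)$ says exactly that $N$ is independent in the positive subgraph of $(G,\sigma)$. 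I would then exhibit the homomorphism explicitly: working on one connected component at a time and fixing a bipartition $(A,B)$ of $G$, set $\phi(v)=u_1$ for $v\in A\cap N$, $\phi(v)=u_3$ for $v\in A\setminus N$, $\phi(v)=u_2$ for $v\in B\setminus N$, and $\phi(v)=u_4$ for $v\in B\cap N$. Every negative edge joins $A\cap N$ to $B\cap N$ and is sent to $u_1u_4$; every positive edge joins $A$ to $B$, and since no positive edge meets $N$ in both endpoints, its image is one of $u_1u_2$, $u_3u_2$, $u_3u_4$, each a positive edge of $C_{\Four}$. Hence $\phi$ is the desired edge-sign preserving homomorphism.

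The main obstacle is the reformulation of the hypothesis: recognising that ``no homomorphic image of $(P_4,\pi)$'' is equivalent to the clean condition that $N$ is independent in the positive subgraph is what makes the four-level assignment go through without any further work. The only remaining care is bookkeeping --- verifying that each sign class of edges lands where intended, and observing that the per-component choice of which side of the bipartition plays the role of $A$ is immaterial, since the two choices yield maps differing by the automorphism $u_1\leftrightarrow u_4$, $u_2\leftrightarrow u_3$ of $C_{\Four}$.
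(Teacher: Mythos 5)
Your proposal is correct. There is, however, no internal proof to compare it against: the paper states this theorem with a citation to \cite{CNS20} and gives no proof of its own, so your argument has to stand on its own merits, and it does. Its pivotal step --- recognising that the non-existence of an edge-sign preserving image of $(P_4,\pi)$ is equivalent to the set $N$ of vertices incident to negative edges being independent in the positive subgraph --- is precisely the reading the paper itself records immediately after the statement (a signature works if and only if no positive edge is adjacent to a negative edge at each of its ends). Both directions check out. For necessity, the composition argument is sound, and your case analysis that $(P_4,\pi) \stackrel{s.p.}{\centernot\longrightarrow} C_{\Four}$ is right: $u_1u_4$ is the only negative edge of $C_{\Four}$, and $u_2$, $u_3$ meet no negative edge. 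For sufficiency, the assignment $A\cap N\mapsto u_1$, $B\setminus N\mapsto u_2$, $A\setminus N\mapsto u_3$, $B\cap N\mapsto u_4$ sends every negative edge to $u_1u_4$ and every positive edge to one of $u_1u_2$, $u_2u_3$, $u_3u_4$, using exactly bipartiteness and the independence of $N$ in the positive subgraph; your walk formulation of homomorphic images of $(P_4,\pi)$ also correctly accounts for non-injective images, so no cases are missed.
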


Combined with Observation~\ref{obs:HomDef}, this theorem says that in order to map a signed bipartite graph $(G,\sigma)$ to $C_{\Four}$ it is necessary and sufficient to find a switching $(G, \sigma')$ of $(G, \sigma)$ where no positive edge is adjacent to a negative edge at each end of it.
 
It can be easily verified that any signed bipartite graph with at most two vertices on one of the two parts maps to $C_{\Four}$. Thus the first example of $C_{\Four}$-critical signed graph must have at least six vertices. Let $\Gamma$ be the signed graph obtained from $K_4$ by subdividing two nonadjacent edges, each once, with a signature assigned in such a way that each triangle of the $K_4$ become a negative 4-cycle (see Figure~\ref{fig:Gamma}). It is not hard to see that $\Gamma$ is an example of a $C_{\Four}$-critical signed graph on six vertices. In fact, up to switching, it is the unique $C_{\Four}$-critical signed graph on six vertices. We further note that $\Gamma$ has $8=\frac{4}{3}\times 6$ edges. 

\begin{figure}[htbp]
	\centering
	\begin{minipage}[t]{.4\textwidth}
		\centering
		\begin{tikzpicture}
			[scale=.21]
					\draw (0, 0) node[circle, draw=black!80, inner sep=0mm, minimum size=2.4mm] (v_4){};
					
					\draw (0, 3.5) node[circle, draw=black!80, inner sep=0mm, minimum size=2.4mm] (v_0){};
					
					\draw (0, -3.3) node[circle, draw=black!80, inner sep=0mm, minimum size=2.4mm] (v_5){};
					
					\foreach \i in {1,2,3}
					{
						\draw[rotate=120*(\i+1)] (0,7) node[circle, draw=black!80, inner sep=0mm, minimum size=2.4mm] (v_\i){};
					}
				
					\foreach \i/\j in {1/4,1/2,1/5,0/4,3/4,2/3}
				{
					\draw[line width=0.5mm, blue] (v_\i) -- (v_\j);
				}
						\foreach \i/\j in {0/2, 3/5}
		     	{
			    	\draw[dashed, line width=0.5mm, red] (v_\i) -- (v_\j);
		     	}
		\end{tikzpicture}
	\end{minipage}
		\caption{The smallest $C_{\Four}$-critical signed graph $\Gamma$}
      \label{fig:Gamma}
\end{figure}

An example of higher interest, which is also a signed graph on a subdivision of $K_4$, is the signed graph $\hat{W}$ of Figure~\ref{fig:W} which is depicted in two different ways. This signed graph is proved in \cite{CNS20} to have smallest maximum average degree among all signed bipartite graphs that does not map to $C_{\Four}$, that is an average degree of $\frac{18}{7}$. Using the extended notion of critical signed graphs we introduced here, we will prove $\hat{W}$ to be the sole exception among the signed bipartite graphs of average degree less than $\frac{8}{3}$.

\begin{figure}[htbp]
\centering
\begin{minipage}{.4\textwidth}
		\centering
		\begin{tikzpicture}
		[scale=.24]
			\foreach \i in {2,3,4}
		{
			\draw[rotate=-120*(\i+1)+60] (0, 4.5) node[circle, draw=black!80, inner sep=0mm, minimum size=4mm] (x_\i){\scriptsize$x_{_\i}$};
		}

	\foreach \i in {1,2,3}
		{
			\draw[rotate=-120*(\i+1)] (0, 4.5) node[circle, draw=black!80, inner sep=0mm, minimum size=4mm] (y_\i){\scriptsize$y_{_\i}$};
		}

		\foreach \i in {1}
		{
			\draw[rotate=60*(\i+1)] (0, 0) node[circle, draw=black!80, inner sep=0mm, minimum size=4mm] (x_\i){\scriptsize $x_{_{\i}}$};
		}

		\foreach \i/\j in {1/1, 1/2, 1/3}
		{
			\draw  [line width=0.5mm, blue] (x_\i) -- (y_\j);
		}
		\foreach \i/\j in {2/1,3/2,4/3}
		{
			\draw  [dashed, line width=0.5mm, red] (x_\i) -- (y_\j);
		}
		\foreach \i/\j in {2/2,3/3,4/1}
		{
			\draw  [line width=0.5mm, blue] (x_\i) -- (y_\j);
		}
          	\end{tikzpicture}
		\end{minipage}
\begin{minipage}{.4\textwidth}
		\centering
		\begin{tikzpicture}
		[scale=.2]
		\foreach \i in{1,2,3,4}
		{
		\draw(4*\i, 0) node[circle, draw=black!80, inner sep=0mm, minimum size=4mm] (x_\i){\scriptsize $x_{_{\i}}$};
		}
		\foreach \i in{1,2,3}
		{
		\draw(4*\i, -8) node[circle, draw=black!80, inner sep=0mm, minimum size=4mm] (y_\i){\scriptsize $y_{_{\i}}$};
		}
		
		\foreach \i/\j in {1/1, 1/2, 1/3, 2/2, 3/3, 4/1}
		{
			\draw  [bend left=18, line width=0.5mm, blue] (x_\i) -- (y_\j);
		}
		
		\foreach \i/\j in {2/1, 3/2, 4/3}
		{
			\draw  [bend left=18, dashed, line width=0.5mm, red] (x_\i) -- (y_\j);
		}	
          	\end{tikzpicture}
	\end{minipage}
\caption{$C_{\Four}$-critical signed graph $\hat{W}$ depicted in two ways}
		\label{fig:W}
\end{figure}

We give two different proofs for the fact that $\hat{W}$ does not map to $C_{\Four}$. Each proof takes advantage of one of the presentations in Figure~\ref{fig:W}, and leads to different development of ideas. 

\begin{proposition}\label{prop:W^NoMapC4}
The signed graph $\hat{W}$ of Figure~\ref{fig:W} does not map to $C_{\Four}$. Moreover, up to a switching equivalence, this is the only signature on this graph with this property.  
\end{proposition}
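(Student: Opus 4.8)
The plan is to combine the edge-sign preserving duality of Theorem~\ref{thm:C4dual} with Observation~\ref{obs:HomDef}. Together these say that $\hat{W}\to C_{\Four}$ precisely when some signature switching-equivalent to that of $\hat{W}$ has no positive edge both of whose endpoints are incident to a negative edge. Writing $N$ for the set of vertices incident to a negative edge of the signature at hand, the obstruction to a homomorphism is exactly a positive edge joining two vertices of $N$. Thus, to prove the first assertion, I would show that \emph{every} switching of $\hat{W}$ produces such a positive edge.

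I would first record the structure visible in Figure~\ref{fig:W}: $\hat{W}$ is the negative $6$-cycle $C=x_2y_1x_4y_3x_3y_2$ together with a hub $x_1$ joined by originally positive edges to $y_1,y_2,y_3$. Normalizing so that no switch is performed at $x_1$, the spokes $x_1y_j$ acquire signs $b_1,b_2,b_3$, and since each of $x_2,x_3,x_4$ has one originally positive and one originally negative edge, its two edges receive opposite signs exactly when the two relevant $b_j$ agree. The analysis is then organized by $t:=|\{j:b_j=-1\}|$. When $t\ge 1$ we have $x_1\in N$, so every positive spoke $x_1y_j$ forces its endpoint $y_j$ to avoid $N$; for $t=1$ the spoke joining the two positive endpoints carries a negative edge and so places one of them in $N$, for $t=2$ the spoke joining the two negative endpoints carries a positive edge with both ends in $N$, and for $t=3$ all seven vertices lie in $N$. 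The subtle case is $t=0$: then $x_1$ is white, but each spoke contributes exactly one negative edge to $C$, and I would argue by how these three negative edges are distributed among $y_1,y_2,y_3$ --- if all three $y_j$ are hit then all of $C$ lies in $N$, and otherwise the $y_j$ that is missed forces its antipodal spoke on $C$ to carry a positive edge inside $N$. This $t=0$ case, handled through the negative $6$-cycle, is where I expect the main work to lie.

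For the \emph{moreover} statement I would classify all signatures on the underlying graph $W$, which is $K_4$ with the triangle $y_1y_2y_3$ subdivided once on each edge. Its cycle space has dimension $3$ and is spanned by the three $4$-cycles through $x_1$, so there are exactly $2^3=8$ switching-equivalence classes, indexed by the signs of these three $4$-cycles, with $\hat{W}$ being the all-negative class. The order-$3$ automorphism of $\hat{W}$ cyclically permutes these $4$-cycles, leaving four orbits of classes: all positive, exactly one negative, exactly two negative, and all negative. To finish, I would exhibit for a representative of each of the first three orbits an explicit switching in which $N$ induces only negative edges: the all-positive signature is immediate, one negative $4$-cycle is realized by a single negative edge of $C$, and two negative $4$-cycles by two opposite edges of $C$ made negative. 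By the symmetry this shows that all seven classes other than $\hat{W}$ map to $C_{\Four}$, which together with the first part gives that $\hat{W}$ is, up to switching, the unique signature on $W$ admitting no homomorphism to $C_{\Four}$.
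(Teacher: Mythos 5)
Your proposal is correct, but it reaches the result by a genuinely different route than the paper. For the impossibility part, the paper argues structurally: in its main proof, the negative outer $6$-cycle must map onto all of $C_{\Four}$, which forces $x_1$ to be identified with one of $x_2,x_3,x_4$ and thus creates a negative digon, contradicting the no-homomorphism lemma (Lemma~\ref{lem:no-hom}); its alternative proof instead notes that $y_1,y_2,y_3$ are pairwise joined by a positive and a negative $2$-path, so they would need three distinct images in a side of $C_{\Four}$ that has only two vertices. You instead run the edge-sign preserving duality of Theorem~\ref{thm:C4dual} over all switchings, parametrized by the spoke signs $b_1,b_2,b_3$; this is more computational, but your observation that the two edges at each degree-$2$ vertex $x_i$ get opposite signs exactly when the two relevant $b_j$ agree is what keeps the case analysis finite and clean, and the analysis is exhaustive (your cases $t\geq 1$ are insensitive to switchings at the $x_i$, and your $t=0$ case covers every possible distribution of the three negative cycle-edges among the $y_j$). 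For the uniqueness part, both you and the paper index the $8$ switching classes by the signs of the three facial $4$-cycles; the paper then folds a degree-$2$ vertex of a positive facial $4$-cycle onto $x_1$ and asserts that the resulting $6$-vertex image ``easily maps to $C_{\Four}$'' (its alternative proof invokes the uniqueness of $\Gamma$ together with an edge count), whereas you exhibit, for a representative of each orbit under the order-$3$ automorphism, an explicit signature in which no positive edge joins two vertices incident to negative edges. What your approach buys is concreteness: every positive claim comes with a certificate checkable against Theorem~\ref{thm:C4dual}, and the orbit reduction limits this to three checks, making precise the step the paper leaves to the reader. What the paper's approach buys is brevity and portability: the surjectivity-of-negative-cycles argument and the no-homomorphism lemma generalize beyond this particular graph, while a switching-by-switching analysis does not scale as gracefully.
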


\begin{proof}
 Based on the presentation on the left side, if $\hat{W}$ maps to $C_{\Four}$, then the outer 6-cycle, as it is a negative cycle, must map surjectively to $C_{\Four}$. It then follows that $x_1$ must be identified with one of $x_2, x_3, x_4$, thus creating a negative cycle of length 2 and, therefore, contradicting the no-homomorphism lemma. 

The equivalence class of signatures on this graph is determined by the signs of its three facial 4-cycles as depicted in the left side of the figure. If one of these facial 4-cycles is positive, then a degree 2 vertex on this face can be mapped to $x_1$, after a switching if needed. The resulting image then easily maps to $C_{\Four}$.  
\end{proof}

{\bf An alternative proof.} Based on the presentation on the right side, observe that each pair among $y_1, y_2, y_3$ is connected by a positive $2$-path (through $x_1$) and by a negative 2-path. Thus identifying any two of them would create $C_{\!\scriptscriptstyle -2}$. In other words, in any homomorphic image of $\hat{W}$ which is a signed simple graph, the vertices $y_1$, $y_2$ and $y_3$ must have distinct images. 

For the moreover part, given a signature on $W$ we may switch it so that $x_1y_1$, $x_1y_2$, and $x_1y_3$ are positive edges. After such a switching, if each of $x_2$, $x_3$, and $x_4$ is incident to one positive and one negative edge, then we have a (switching) isomorphic copy of $\hat{W}$. Otherwise, one of the vertices $x_2$, $x_3$, and $x_4$ can map to $x_1$. After such a mapping, we have a signed  bipartite graph on six vertices. If this signed graph does not map to $C_{\Four}$, then it must contain $\Gamma$ as a subgraph. However, $\Gamma$ has eight edges while our six-vertex graph has only seven edges. $\hfill \Box$

\medskip

Our goal here is to study the edge-density of $C_{\Four}$-critical signed graphs. We show that, with the exception of $\hat{W}$, every such signed graph has edge density at least $\frac{4}{3}$. In our proof then not only $\hat{W}$ but some constructions based on $\hat{W}$ will be of importance. 

Automorphisms of $\hat{W}$ split its vertices to three orbits: $\{x_1\}$, $\{y_1, y_2, y_3 \}$ and $\{x_2, x_3, x_4\}$ and split its edges to two orbits: those incident to $x_1$ and those on the outer 6-cycle. We will need to consider two signed graphs obtained from $\hat{W}$ by subdividing one of its edges twice and then assigning a signature on the edges of this path so that the sign of the path is the same as the sign of the edge it has replaced. Since there are two orbits of the edges on $\hat{W}$, essentially we have only two signed graphs obtained in this way. Presentations of these two signed graphs, each after a  switching, are given in Figures~\ref{fig:Omega1} and \ref{fig:Omega2}. The signed graph of Figure~\ref{fig:Omega1}, $\Omega_1$, is obtained from $\hat{W}$ by subdividing the edge $x_1y_3$ twice (where all three edges are assigned positive signs) and then switching at the vertex set $\{x_2, x_3, y_3\}$. The signed graph of Figure~\ref{fig:Omega2}, $\Omega_2$, is obtained from $\hat{W}$ by subdividing the edge $x_4y_1$ twice  (where all three edges are assigned positive signs) and then switching at the vertex set $\{x_2, x_4, y_2\}$.

\begin{figure}[htbp]
\centering
\begin{minipage}{.4\textwidth}
		\centering
		\begin{tikzpicture}
		[scale=.2]
		\foreach \i in {1,2,3,4}
		{
		\draw(4*\i, 0) node[circle, draw=black!80, inner sep=0mm, minimum size=4mm] (x_\i){\scriptsize $x_{_\i}$};
		}
		\draw(0, 0) node[circle, draw=black!80, inner sep=0mm, minimum size=4mm] (x_0){\scriptsize $x_{_0}$};
		
		\foreach \i in {1,2,3}
		{
		\draw(4*\i, -8) node[circle, draw=black!80, inner sep=0mm, minimum size=4mm] (y_\i){\scriptsize $y_{_\i}$};
		}
		
		\draw(0, -8) node[circle, draw=black!80, inner sep=0mm, minimum size=4mm] (y_0){\scriptsize$y_{_0}$};
		
		\foreach \i/\j in {0/0, 1/0, 1/1, 1/2, 2/1, 3/2, 3/3, 4/1, 4/3}
		{
			\draw  [bend left=18, line width=0.5mm, blue] (x_\i) -- (y_\j);
		}
		
		\foreach \i/\j in {2/2, 0/3}
		{
			\draw  [bend left=18, dashed, line width=0.5mm, red] (x_\i) -- (y_\j);
		}	
          	\end{tikzpicture}
		\caption{$\Omega_1$}
		\label{fig:Omega1}
\end{minipage}
\begin{minipage}{.4\textwidth}
		\centering
		\begin{tikzpicture}
		[scale=.2]
		\foreach \i in{0,1,2,3,4}
		{
		\draw(4*\i, 0) node[circle, draw=black!80, inner sep=0mm, minimum size=4mm] (x_\i){\scriptsize $x_{_\i}$};
		}
		\foreach \i in{0,1,2,3}
		{
		\draw(4*\i, -8) node[circle, draw=black!80, inner sep=0mm, minimum size=4mm] (y_\i){\scriptsize $y_{_\i}$};
		}
		
		\foreach \i/\j in {0/0,0/1, 1/1, 1/3, 2/1, 2/2, 3/2, 3/3, 4/3}
		{
			\draw  [bend left=18, line width=0.5mm, blue] (x_\i) -- (y_\j);
		}
		
		\foreach \i/\j in {4/0, 1/2}
		{
			\draw  [bend left=18, dashed, line width=0.5mm, red] (x_\i) -- (y_\j);
		}	
          	\end{tikzpicture}
		\caption{$\Omega_2$}
		\label{fig:Omega2}
\end{minipage}
\end{figure}

It is easily observed that each of the two signed graphs with the signature presented in the Figures~\ref{fig:Omega1} and \ref{fig:Omega2} satisfies the conditions of Theorem~\ref{thm:C4dual}, and, therefore, each of them maps to $C_{\Four}$. 
In the next two lemmas, we show that one cannot make either of these two signed graphs $C_{\Four}$-critical by only adding a vertex of degree $2$.

\begin{lemma}\label{lem:Omega1}
Let $\Omega_1$ be the signed graph of Figure~\ref{fig:Omega1}. If we add a vertex $v$ to one part of $\Omega_1$ and connect it with two vertices in the other part (with any signature), the resulting signed graph admits a homomorphism to $C_{\Four}$.
\end{lemma}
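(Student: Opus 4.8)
The plan is to combine the duality of Theorem~\ref{thm:C4dual} with an extension viewpoint. Since $\Omega_1 \to C_{\Four}$ already holds, it suffices to exhibit \emph{some} homomorphism $\phi\colon \Omega_1 \to C_{\Four}$ that extends to the new degree-$2$ vertex $v$. Write $a,b$ for the two neighbours of $v$; they lie in the part of $\Omega_1$ opposite to $v$. An extension amounts to choosing for $v$ a vertex $v'$ of $C_{\Four}$ that is a common neighbour of $\phi(a)$ and $\phi(b)$ realising the prescribed signs of $va$ and $vb$. Because we are free to switch at $v$, only the sign-pair up to a simultaneous flip matters, i.e.\ only the sign of the $2$-path $a\text{-}v\text{-}b$ is relevant.

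First I would record the needed local property of $C_{\Four}$, labelled as in Figure~\ref{fig:C_4}. Inspecting the four vertices shows that any two \emph{distinct} vertices in the same part of $C_{\Four}$ have a common neighbour joined to them by edges of \emph{either} prescribed sign-pattern (up to switching at that neighbour), whereas a single vertex admits common neighbours realising only equal signs on the two edges. Consequently, if $a\text{-}v\text{-}b$ is a positive path (the two new edges have equal sign), then \emph{every} homomorphism of $\Omega_1$ extends: when $\phi(a)=\phi(b)$ the forced equal signs are exactly what is required, and when $\phi(a)\neq\phi(b)$ any signs can be realised. So in the positive case the lemma is immediate.

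It remains to treat a negative path $a\text{-}v\text{-}b$ (the two new edges of opposite sign), which by the property above reduces to finding a homomorphism $\phi\colon \Omega_1 \to C_{\Four}$ with $\phi(a)\neq\phi(b)$. Thus the whole lemma comes down to the \emph{separation claim}: every pair of vertices lying in a common part of $\Omega_1$ can be mapped to two distinct vertices of $C_{\Four}$ by some homomorphism. To prove this I would first shrink the work using the automorphisms of $\Omega_1$ (those surviving the double subdivision of the edge $x_1y_3$). Some pairs, such as $y_1,y_2$, are joined by a negative $2$-path inside $\Omega_1$ and hence receive distinct images under \emph{every} homomorphism; these are separated for free. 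For each remaining orbit representative I would exhibit an explicit switching of $\Omega_1$ which, in the sense of Theorem~\ref{thm:C4dual}, has no positive edge carrying a negative edge at both of its ends, and which keeps the two chosen vertices apart; each such check is a short verification on the signature drawn in Figure~\ref{fig:Omega1}.

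The main obstacle is precisely this finite but delicate case analysis: one must ensure that no same-part pair of $\Omega_1$ is rigidly forced onto a common image. The sensitive cases are pairs joined by a positive $2$-path (allowed, but not required, to coincide) and the degree-$2$ subdivision vertices $x_0,y_0$, whose small neighbourhoods leave little room; here one must use the extra flexibility created by subdividing $x_1y_3$ twice to produce a separating signature. Once the separation claim is confirmed on all orbit representatives, combining it with the two reductions above yields the lemma in both the positive- and negative-path cases.
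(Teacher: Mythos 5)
Your reduction of the opposite-signs case to the \emph{separation claim} is where the proof breaks, because the separation claim is false for $\Omega_1$. Consider the pair $\{x_0,x_2\}$ (so $v$ is added to the $Y$ part and joined to $x_0$ and $x_2$ by edges of opposite signs): \emph{every} homomorphism $\phi=(\phi_1,\phi_2)$ of $\Omega_1$ to $C_{\Four}$ satisfies $\phi_2(x_0)=\phi_2(x_2)$. Indeed, the unique $4$-cycle $x_1y_1x_2y_2$ of $\Omega_1$ is negative, hence maps bijectively onto $C_{\Four}$; composing with an automorphism of $C_{\Four}$ and a global switch, we may assume $\phi(x_1)=(+,u_1)$, $\phi(y_1)=(+,u_2)$, $\phi(x_2)=(+,u_3)$, $\phi(y_2)=(-,u_4)$. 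The positive edge $x_4y_1$ forces $\phi_1(x_4)=+$, since both edges of $C_{\Four}$ at $u_2$ are positive. Now suppose $\phi_2(x_0)=u_1$. Comparing the positive edges $x_0y_0$ and $x_1y_0$, whose images both lie on the single edge $u_1\phi_2(y_0)$, gives $\phi_1(x_0)=\phi_1(x_1)=+$. If $\phi_2(y_3)=u_2$, the negative edge $x_0y_3$ forces $\phi_1(y_3)=-$ while the positive edge $x_4y_3$ forces $\phi_1(y_3)=+$, a contradiction. If $\phi_2(y_3)=u_4$, then $x_0y_3$ forces $\phi_1(y_3)=+$, so $y_2$ and $y_3$ are identified with opposite switch states; this is impossible because $x_3$ is joined to both by positive edges whose images must both coincide with the single edge $\phi_2(x_3)u_4$. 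Hence $\phi_2(x_0)=u_3=\phi_2(x_2)$ in every homomorphism, no separating homomorphism exists, and your argument has nothing left to say in exactly one of the cases the lemma must cover.

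The lemma is nonetheless true for this pair, and the reason exposes the root flaw: you track only the vertex images $\phi_2(a),\phi_2(b)$ and the sign of the path $avb$ in the fixed signature, discarding the switching components. The correct extension criterion is: $\phi$ extends to $v$ if and only if $\phi_2(a)\neq\phi_2(b)$, or $\phi_2(a)=\phi_2(b)$ and $\phi_1(a)\phi_1(b)\,\sigma(va)\sigma(vb)=+$. For $\{x_0,x_2\}$ there is a homomorphism with $\phi_2(x_0)=\phi_2(x_2)$ but $\phi_1(x_0)\phi_1(x_2)=-$, and it extends over the negative path even though it separates nothing; this is precisely what the paper's proof produces by switching at $x_0$ and invoking Theorem~\ref{thm:C4dual}. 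The same oversight makes your positive-case claim that ``every homomorphism of $\Omega_1$ extends'' false: a homomorphism identifying $a$ and $b$ with opposite switch states does not extend over a positive path. There the conclusion is easily repaired (use the edge-sign preserving homomorphism supplied by Theorem~\ref{thm:C4dual} for the signature of Figure~\ref{fig:Omega1}, which involves no switchings at all), but in the negative case no repair based on separation alone is possible: one must carry the switch components $\phi_1$ through the argument, which is what the paper's signature-by-signature case analysis does implicitly.
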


\begin{proof}
Let $\Omega_1$ be the signed bipartite graph of Figure~\ref{fig:Omega1} consisting of a bipartition $(X, Y)$ where $X=\{x_0, x_1, x_2, x_3, x_4\}$ and $Y=\{y_0, y_1, y_2, y_3\}$.

If the two edges incident to the new vertex $v$ are of the same sign, by switching at that new vertex, if needed, we consider them both positive. The resulting signed graph has a signature satisfying Theorem~\ref{thm:C4dual}, therefore, maps to $C_{\Four}$. Hence we assume that the two edges incident to $v$ are of different signs and consider two cases depending on to which part the vertex $v$ belongs.

\noindent
\textbf{Case 1.}  $v$ is added to the $X$ part. We consider three possibilities.

\begin{itemize}
\item $v$ is adjacent to $y_3$. By switching at $v$, if necessary, we assume that $vy_3$ is negative. The only possible problem against Theorem~\ref{thm:C4dual} is by the positive edge $vy_2$. In that case, to resolve the issue, we apply a switching at $x_2$. 

\item $v$ is not adjacent to $y_3$ but $v$ is adjacent to $y_2$. We consider $vy_2$ to be negative and we are done.

\item $v$ is adjacent to both of $y_0$ and $y_1$.  We take $vy_1$ as a negative edge and we are done after a switching at $x_2$.
\end{itemize}

\noindent
\textbf{Case 2.} $v$ is added to the $Y$ part. We consider three possibilities.
\begin{itemize}

\item $v$ is adjacent to one or both of $x_0$ and $x_2$. We switch at one of $x_0$ and $x_2$, one which is adjacent to $v$. Then by a switching at $v$ (if needed) we have both edges incident to $v$ of positive signs. The resulting signed graph satisfies the conditions of Theorem~\ref{thm:C4dual}.

\item $v$ is adjacent to $x_3$ but not adjacent to $x_0$ and $x_2$. We assume $vx_3$ is a negative edge. We switch at $x_3$ and the conditions of Theorem~\ref{thm:C4dual} are satisfied.

\item $v$ is adjacent to both of $x_1$ and $x_4$. Similarly, we assume $vx_4$ is negative. We switch at $x_0$ and we are done. \qedhere
\end{itemize}

\end{proof}

\begin{lemma}\label{lem:Omega2}
Let $\Omega_2$ be the signed graph of Figure~\ref{fig:Omega2}. If we add a vertex $v$ to one part of $\Omega_2$ and connect it with two vertices in the other part (with any signature), the resulting signed graph either contains $\hat{W}$ and maps to it or admits a homomorphism to $C_{\Four}$.
\end{lemma}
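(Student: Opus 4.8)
The plan is to mirror the proof of Lemma~\ref{lem:Omega1}, the one genuinely new phenomenon being that a few attachments of $v$ produce a copy of $\hat{W}$ instead of a signed graph that maps to $C_{\Four}$. Write $X=\{x_0,\ldots,x_4\}$ and $Y=\{y_0,y_1,y_2,y_3\}$ for the two parts of $\Omega_2$. Exactly as in the previous lemma, if the two edges incident to $v$ receive the same sign, then after switching at $v$ so that both become positive the two new positive edges meet only at $v$, which is then incident to no negative edge; since $\Omega_2$ itself satisfies the condition of Theorem~\ref{thm:C4dual}, this condition survives and the resulting signed graph maps to $C_{\Four}$. So I may assume the two edges at $v$ have opposite signs.

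To locate the copies of $\hat{W}$, I would use the description of $\hat{W}$ from the alternative proof of Proposition~\ref{prop:W^NoMapC4}: up to switching, $\hat{W}$ consists of three vertices of one part that pairwise carry both a positive and a negative $2$-path, where the three positive $2$-paths share a common middle vertex and the three negative $2$-paths use three further distinct middle vertices. Since $v$ has degree $2$, it can only occupy one of the three degree-$2$ middle vertices of a negative $2$-path, so any copy of $\hat{W}$ meeting $v$ must have its other six vertices inside $\Omega_2$ and must use $v$ to supply a single missing negative $2$-path. Inspecting $\Omega_2$ shows that among the triples of one part having a common neighbour, only $\{y_1,y_2,y_3\}$ (centre $x_1$) and $\{x_1,x_2,x_3\}$ (centre $y_2$) already realise every required $2$-path but for a single missing negative $2$-path — between $y_1$ and $y_3$, respectively between $x_2$ and $x_3$ — while the remaining such triples each lack two negative $2$-paths and so cannot be completed by the single degree-$2$ vertex $v$. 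Hence the only attachments creating $\hat{W}$ are $v\in X$ joined to $y_1$ and $y_3$, and $v\in Y$ joined to $x_2$ and $x_3$, in each case with the two edges of opposite sign. In these cases I would check that the three defining $4$-cycles are negative, so the subgraph is switching equivalent to $\hat{W}$, and for the \emph{maps to it} part fold the subdivided path $x_4y_0x_0y_1$ back onto the edge it replaced and send $v$ to the appropriate common neighbour of its two images in $\hat{W}$; this exhibits a homomorphism of the whole graph onto $\hat{W}$.

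It remains to handle every other opposite-sign attachment: the five remaining pairs for $v\in X$ and the nine remaining pairs for $v\in Y$. For each I would first use the switch at $v$ to decide which of its two edges is the negative one, and then apply one switching at a suitably chosen vertex of $\Omega_2$ to restore the condition of Theorem~\ref{thm:C4dual}. The right vertex is not always adjacent to $v$: for instance, when $v$ is joined to $x_1$ and $x_4$ one switches at $y_2$, in order to dissolve the conflict forced by the two negative edges $x_1y_2$ and $x_4y_0$ of $\Omega_2$. Each resulting configuration is then a short, finite verification that no positive edge is flanked by two negative edges.

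The conceptual heart of the argument, and its only non-mechanical step, is the observation that $\Omega_2$ fails to contain $\hat{W}$ solely because of two specific absent negative $2$-paths; this simultaneously explains why $\Omega_2$ maps to $C_{\Four}$ and pinpoints the exactly two attachments of $v$ that destroy this property. The main practical obstacle is then the bookkeeping in the opposite-sign cases: the repairing switch must be chosen with care, since a careless choice (for example switching at $x_1$ or $x_4$ in the configuration above) merely moves the violation of Theorem~\ref{thm:C4dual} to another positive edge rather than removing it.
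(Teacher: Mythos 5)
Your overall strategy matches the paper's: dispose of the equal-sign case by switching at $v$, identify the two opposite-sign attachments ($v$ joined to $y_1,y_3$, and $v$ joined to $x_2,x_3$) that create a copy of $\hat{W}$, and repair every other opposite-sign attachment by one further switching so that the condition of Theorem~\ref{thm:C4dual} holds. Your structural argument for why exactly these two attachments yield $\hat{W}$ (a degree-$2$ vertex can only play a rim role, and only the triples centred at $x_1$ and at $y_2$ are missing a single rim vertex) is correct, and it makes explicit an exhaustiveness claim that the paper only realizes implicitly through its case analysis.

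However, your recipe for the ``maps to it'' part fails in the second $\hat{W}$-case, $v$ adjacent to $x_2$ and $x_3$. Folding $x_4y_0x_0y_1$ back onto a negative edge $x_4y_1$ does recover a switching of $\hat{W}$ on $\{x_1,\dots,x_4,y_1,y_2,y_3\}$, but then $v$ must be sent to a common neighbour of the images of $x_2$ and $x_3$, and in $W$ the \emph{only} common neighbour of $x_2$ and $x_3$ is $y_2$. The $2$-path $x_2y_2x_3$ is positive (both of its edges are positive, and the fold leaves them untouched), while $x_2vx_3$ is negative; equivalently, the negative $4$-cycle $x_2vx_3y_2$ would have to map onto the positive closed walk $x_2y_2x_3y_2x_2$, which no switching at $v$ can repair, since switching at $v$ preserves the sign of the $2$-path $x_2vx_3$. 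So in this case there is no ``appropriate common neighbour'' and your map simply does not exist. (In the first case, $v\sim y_1,y_3$, your recipe does work, because $y_1$ and $y_3$ have two common neighbours, $x_1$ and $x_4$, carrying $2$-paths of both signs.) The correct move --- and what the paper does --- is the opposite one: keep the copy of $\hat{W}$ containing $v$ (its centre is $y_2$, with $x_1,x_2,x_3$ playing the role of the degree-$3$ part) and map the three outside vertices into it; for instance the outside path $y_1x_0y_0x_4y_3$, which has length $4$ and negative sign, can be sent onto the walk $y_1x_1y_2x_3y_3$ of the copy via $x_0\mapsto x_1$, $y_0\mapsto y_2$, $x_4\mapsto x_3$ with one switching at $y_0$. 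A secondary issue: your fourteen remaining opposite-sign attachments are asserted rather than checked (the paper compresses them into a few bullets, each with an explicit repairing switch), and your one worked example ($v\sim x_1,x_4$, switch at $y_2$) succeeds only if the negative edge at $v$ is chosen to be $vx_4$; with $vx_1$ negative, the positive edge $vx_4$ still sees the negative edge $x_4y_0$ at its other end, so that choice is essential and must be stated.
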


\begin{proof}
Let $\Omega_2$ be the signed bipartite graph of Figure~\ref{fig:Omega2} consisting of a bipartition $(X, Y)$ where $X=\{x_0, x_1, x_2, x_3, x_4\}$ and $Y=\{y_0, y_1, y_2, y_3\}$.

As in the previous lemma, we can assume that of the two edges incident to $v$ exactly one is negative. Again, we consider two cases depending on to which part $v$ belongs.

\noindent
\textbf{Case 1.} $v$ is added to the $X$ part. We consider three possibilities.

\begin{itemize}
\item $v$ is adjacent to $y_2$. By a switching at $v$, if needed, we assume  $vy_2$ is negative. The only reason Theorem~\ref{thm:C4dual} may not work is by the positive edge $vy_0$. This can be taken care of by switching at $y_0$.

\item $v$ is adjacent to $y_0$ but not adjacent to $y_2$.  By considering $vy_0$ as the negative edge incident to $v$, the resulting signed graph satisfies the condition of Theorem~\ref{thm:C4dual}.

\item $v$ is adjacent to both $y_1$ and $y_3$. We may assume $vy_3$ is a negative edge. The subgraph induced by $x_1,x_2,x_3, y_1, y_2, y_3$ and $v$ is (switching) isomorphic to $\hat{W}$.  To see this isomorphism, using labeling of $W$ as in the Figure~\ref{fig:W}, it is enough to switch at $x_2$ and $y_2$, and relabel $v$ as $x_4$ while keeping all other labels the same. Finally, to see that the full graph of this case maps to $\hat{W}$, it is enough to extend the previous isomorphism (which we name $\psi$) to a mapping. This is done, for example, by setting $\psi(x_0)=(+,x_1)$ and $\psi(y_0)=(+,y_2)$ and $\psi(x_4)=(+,x_3)$.
\end{itemize}

\noindent
\textbf{Case 2.} $v$ is added to the $Y$ part. We consider four possibilities.

\begin{itemize}
\item $v$ is adjacent to $x_1$.  We choose $vx_1$ to be negative.  The only obstacle against Theorem~\ref{thm:C4dual} then can come from the positive edge $vx_4$, but we can switch at $y_0$ to resolve this issue.

\item $v$ is adjacent to $x_4$ but not adjacent to $x_1$. We choose $vx_4$ to be negative. Then we already have a signature satisfying conditions of Theorem~\ref{thm:C4dual}.

\item $v$ is adjacent to $x_0$ but to neither of $x_1$ and $x_4$. Assuming that $vx_0$ is negative, we can switch at $y_0$ to apply Theorem~\ref{thm:C4dual}.

\item $v$ is adjacent to both $x_2$ and $x_3$. We assume $vx_3$ is negative. The subgraph induced by $x_1,x_2,x_3, y_1, y_2, y_3$ and $v$ is (switching) isomorphic to $\hat{W}$. One such isomorphism $\phi$ is defined as follows:
$\phi(x_1)=(-,y_2)$,
$\phi(x_2)=(+,y_1)$,
$\phi(x_3)=(+,y_3)$,
$\phi(y_1)=(-,x_2)$,
$\phi(y_2)=(+,x_1)$,
$\phi(y_3)=(+,x_3)$,
$\phi(v)=(+,x_4)$.
To complete this isomorphism  to a homomorphism of the full graph to $\hat{W}$ we map $x_0$, $x_4$ and $y_0$ as follows:
$\phi(x_0)=(-,y_2)$,
$\phi(x_4)=(+,y_3)$,
$\phi(y_0)=(-,x_1)$.
\end{itemize}

We note that for a better correspondence we have used same or similar labels for vertices of the graphs. In the mappings $\psi$ and $\phi$ thus the vertices of the domains are those of the graphs we work with but the images are those of $\hat{W}$ as labeled in Figure~\ref{fig:W}. 
\end{proof}

Some general structural properties of a $C_{\Four}$-critical signed graph are as follows.

\begin{lemma}\label{lem:2connected}
Every $C_{\Four}$-critical signed graph is $2$-connected. 
\end{lemma}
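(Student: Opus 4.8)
The plan is to rule out the two ways $2$-connectivity can fail: disconnectedness and the presence of a cut vertex, in each case producing a homomorphism to $C_{\Four}$ to contradict criticality.

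First I would dispose of connectedness. Suppose $(G,\sigma)$ were disconnected. Since a (switching) homomorphism to $C_{\Four}$ can be defined independently on each connected component, $(G,\sigma)\to C_{\Four}$ would follow if every component mapped to $C_{\Four}$. As $(G,\sigma)\not\to C_{\Four}$, some component $(G_i,\sigma)$ fails to map to $C_{\Four}$; but $G_i$ is a proper subgraph of $G$, contradicting the third defining condition of a $C_{\Four}$-critical signed graph. Hence $(G,\sigma)$ is connected.

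Next, suppose for contradiction that $v$ is a cut vertex. Letting $C_1,\dots,C_m$ (with $m\geq 2$) be the components of $G-v$, I set $G_1=G[V(C_1)\cup\{v\}]$ and $G_2=G[V(C_2)\cup\cdots\cup V(C_m)\cup\{v\}]$. Then $V(G_1)\cap V(G_2)=\{v\}$, the edge sets of $G_1$ and $G_2$ partition $E(G)$, and each $G_i$ is connected and contains a neighbor of $v$ (by connectivity of $G$), hence each is a proper subgraph carrying at least one edge. By criticality, each of $(G_1,\sigma)$ and $(G_2,\sigma)$ admits a homomorphism to $C_{\Four}$; write them in the two-part form $\phi^{(1)}=(\phi_1^{(1)},\phi_2^{(1)})$ and $\phi^{(2)}=(\phi_1^{(2)},\phi_2^{(2)})$, where $\phi_1$ is the switching part and $\phi_2$ the vertex map. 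My goal is to glue these into a single homomorphism $(G,\sigma)\to C_{\Four}$. Because $G_1$ and $G_2$ share only $v$ and no edge, and because once $\phi_1$ and $\phi_2$ are fixed the homomorphism condition reduces to the edge-local requirement that each edge $xy$ satisfy $\pi(\phi_2(x)\phi_2(y))=\phi_1(x)\phi_1(y)\sigma(xy)$ with $\phi_2(x)\phi_2(y)\in E(C_{\Four})$, the glued map is automatically a valid homomorphism provided $\phi^{(1)}$ and $\phi^{(2)}$ agree at $v$ in both coordinates.

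The heart of the argument, and the step I expect to require the most care, is forcing this agreement at $v$ through the symmetries of $C_{\Four}$. First, since $C_{\Four}$ is vertex-transitive with respect to (switching) homomorphisms, there is a switching automorphism $\tau$ of $C_{\Four}$ sending $\phi_2^{(2)}(v)$ to $\phi_2^{(1)}(v)$; replacing $\phi^{(2)}$ by $\tau\circ\phi^{(2)}$, which is still a homomorphism of $(G_2,\sigma)$ as a composition of homomorphisms, I may assume $\phi_2^{(1)}(v)=\phi_2^{(2)}(v)$. Second, to reconcile the switching coordinate at $v$, I would invoke the fact that $(\phi_1^{(2)},\phi_2^{(2)})$ and $(-\phi_1^{(2)},\phi_2^{(2)})$ denote the same homomorphism: thus if $\phi_1^{(1)}(v)\neq\phi_1^{(2)}(v)$, replacing $\phi^{(2)}$ by $(-\phi_1^{(2)},\phi_2^{(2)})$ flips the switching value at $v$ while leaving $\phi_2^{(2)}$ intact, yielding agreement in both coordinates. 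Gluing then produces a homomorphism $(G,\sigma)\to C_{\Four}$, the desired contradiction. The delicate points to check are that the decomposition can be taken with both parts connected and proper, and that aligning the vertex coordinate via a possibly part-swapping automorphism of $C_{\Four}$ causes no clash with the bipartition of $G$; both follow from the connectivity of each $G_i$ and the stated vertex-transitivity of $C_{\Four}$.
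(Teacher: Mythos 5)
Your proof is correct and matches the paper's intended approach: the paper disposes of Lemma~\ref{lem:2connected} in one line, saying it is ``an easy consequence of the fact that $C_{\Four}$ is vertex transitive'' and leaving the details as an exercise. Your cut-vertex decomposition, with the image of $v$ aligned by a switching automorphism of $C_{\Four}$ and the switching coordinate reconciled via the identification of $(\phi_1,\phi_2)$ with $(-\phi_1,\phi_2)$, supplies exactly those details and is sound.
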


\begin{proof}
This is an easy consequence of the fact that $C_{\Four}$ is vertex transitive and we leave the details as an exercise.
\end{proof}

We say a path $P$ of length $k$ in $\hat{G}$ is a \emph{$k$-thread} if all of its $k-1$ internal vertices are of degree $2$ in $G$. It is easily observed that the maximum length of a thread in an $(H, \pi)$-critical graph is bounded by a function of $(H,\pi)$. For $C_{\Four}$-critical signed graphs, we have:

\begin{lemma}\label{lem:3thread}
A $C_{\Four}$-critical signed graph $\hat{G}$ does not contain a $3$-thread.
\end{lemma}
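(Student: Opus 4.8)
My plan is to argue by contradiction, using criticality to delete the middle edge of the thread and then re-extending the resulting homomorphism across it, exploiting that $C_{\Four}$ is, as an unsigned graph, the complete bipartite graph $K_{2,2}$. Suppose $\hat{G}$ contains a $3$-thread $u-x-y-v$ with $\deg_G x=\deg_G y=2$. Since $G$ is bipartite, the vertices $u,x,y,v$ are distinct and $u,v$ lie in opposite parts (they are at odd distance $3$). By $C_{\Four}$-criticality, the proper subgraph $\hat{G}-xy$ admits a homomorphism $\phi=(\phi_1,\phi_2)$ to $C_{\Four}$. In $\hat{G}-xy$ the vertices $x$ and $y$ are pendant, attached only to $u$ and to $v$ respectively. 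I keep $\phi$ unchanged on $V(\hat{G})\setminus\{x,y\}$ and re-choose it on $x$ and $y$; since these two vertices interact with the rest of the graph only through $u$ and $v$, this affects no other constraint.

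Because $C_{\Four}=K_{2,2}$ and $\phi$ respects the bipartition, $\phi_2(u)$ and $\phi_2(v)$ lie in opposite parts of $C_{\Four}$. A pendant vertex can be sent to either of the two vertices in the part opposite its neighbour's image, with the switch value then forced by the incident edge; hence $\phi_2(x)$ and $\phi_2(y)$ each have two admissible values. Since $C_{\Four}=K_{2,2}$, every admissible pair $(\phi_2(x),\phi_2(y))$ is adjacent, so the deleted edge $xy$ can always be reinstated as far as adjacency is concerned.

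The only remaining point, and the crux of the argument, is the sign of the reinstated edge $xy$. Once $\phi_2(x)$ and $\phi_2(y)$ are chosen, $\phi_1(x)$ and $\phi_1(y)$ are determined by the edges $ux$ and $vy$, and the required identity becomes $A\cdot\pi(\phi_2(u)\phi_2(x))\cdot\pi(\phi_2(v)\phi_2(y))=\pi(\phi_2(x)\phi_2(y))$, where $A=\sigma(ux)\sigma(xy)\sigma(vy)\phi_1(u)\phi_1(v)$ is a fixed sign. I would finish with a short case analysis over the $2\times 2$ admissible choices of $(\phi_2(x),\phi_2(y))$, reduced by the switching vertex- and edge-transitivity of $C_{\Four}$ to a single position of the pair $(\phi_2(u),\phi_2(v))$, checking that for either value of $A$ at least one choice satisfies the identity. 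This produces a homomorphism $\hat{G}\to C_{\Four}$, contradicting $\hat{G}\not\to C_{\Four}$.

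The main obstacle is precisely this sign bookkeeping: the unique negative edge of $C_{\Four}$ must not obstruct the re-insertion of $xy$, and it is here that the ``sign-flexibility'' of $C_{\Four}$ is used, although the verification itself is routine. An equivalent route would phrase everything through the duality of Theorem~\ref{thm:C4dual}: starting from a switching of $\hat{G}-xy$ with no $(-,+,-)$ subwalk, one uses the switchings freely available at the degree-$2$ vertices $x$ and $y$ to set the signs of $ux$, $xy$, $vy$ so that no forbidden $(-,+,-)$ pattern is created along the thread.
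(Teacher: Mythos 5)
Your proposal is correct and takes essentially the same approach as the paper: the paper deletes the two interior vertices of the thread (which is what your restriction of a homomorphism of $\hat{G}-xy$ to $V(G)\setminus\{x,y\}$ amounts to) and then re-extends across the thread, and your deferred $2\times 2$ sign check indeed succeeds, for exactly the reason the paper uses directly — since the $4$-cycle is negative, the edge joining the images of the thread's endpoints and the complementary $3$-path of $C_{\Four}$ have opposite signs, so a $3$-thread of either sign can be mapped onto one of them. The one step you should make explicit is that bipartition preservation (hence adjacency of the images of the endpoints) requires $\hat{G}-xy$ to be connected, which the paper derives from $2$-connectivity (Lemma~\ref{lem:2connected}).
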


\begin{proof}
Assume to the contrary that $G$ has a 3-thread $P=x_0x_1x_2x_3$. Recall that a $C_{\Four}$-critical signed graph is bipartite. As $x_0$ and $x_3$ are connected by a path of length $3$, they are  in different parts of $G$. Since $\hat{G}$ is $C_{\Four}$-critical, the signed graph $\hat{G'}=\hat{G}-\{x_1,x_2\}$ maps to $C_{\Four}$. 
Let $\varphi$ be such a mapping. Observe that, by Lemma~\ref{lem:2connected}, $\hat{G'}$ is connected, thus $\varphi$ preserves the bipartition of $G'$. In particular, $\varphi(x_0)$ and $\varphi(x_3)$ are in two different parts of $C_{\Four}$ and thus adjacent. We note that $\varphi$ has possibly applied switchings on some vertices of $G'$. Working with the resulting signature obtained from the same switching on $\hat{G}$, we let $\hat{P}$ be the signed graph induced on $P$. If $\hat{P}$ has the same sign as the edge $\varphi(x_0)\varphi(x_3)$, then $\varphi$ can be extended by mapping $\hat{P}$ to this edge as well. Otherwise, $\varphi$ can be extended by mapping $\hat{P}$ to the rest of the $C_{\Four}$ (that is $C_{\Four}-\varphi(x_0)\varphi(x_3)$).
\end{proof}

In this lemma, length 3 for a forbidden thread is the best one can do. We have already seen examples of $C_{\Four}$-critical signed graphs with vertices of degree $2$, that correspond to 2-threads. However, we may still apply some restriction on such threads:

\begin{observation}
\label{obs:signofpositive4cycle}
Given a $C_{\Four}$-critical signed graph, a vertex of degree 2 cannot be on a $C_{\!\scriptscriptstyle +4}$.
\end{observation}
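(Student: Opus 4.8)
The plan is to argue by contradiction, in the spirit of the proof of Lemma~\ref{lem:3thread}. Suppose $\hat{G}$ is $C_{\Four}$-critical and contains a vertex $v$ of degree $2$ lying on a positive $4$-cycle. Write $x$ and $y$ for the two neighbours of $v$ (these are \emph{all} of its neighbours, as $\deg(v)=2$), and let $u$ be the fourth vertex of that $4$-cycle, so the cycle is $vxuy$. Since a $C_{\Four}$-critical signed graph is bipartite and $v$ is adjacent to both $x$ and $y$, the vertices $x$ and $y$ lie in a common part. By criticality the proper subgraph $\hat{G}'=\hat{G}-v$ admits a homomorphism $\varphi$ to $C_{\Four}$; after applying the switchings prescribed by $\varphi$ (none of them at $v$, which is absent from $\hat{G}'$), I may assume $\varphi$ is edge-sign preserving for the resulting equivalent signature $\sigma'$. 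The whole point is to show that $\varphi$ always extends to $v$, yielding $\hat{G}\to C_{\Four}$, which contradicts $\hat{G}\not\to C_{\Four}$.

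The key reduction is that extending $\varphi$ to $v$ is equivalent to choosing a vertex $z$ of $C_{\Four}$ adjacent to both $\varphi(x)$ and $\varphi(y)$ so that the $2$-path $\varphi(x)\,z\,\varphi(y)$ has the same sign as the $2$-path $x\,v\,y$ under $\sigma'$. Since switching at $v$ leaves $\hat{G}'$ untouched and flips both $\sigma'(vx)$ and $\sigma'(vy)$, the only relevant invariant at $v$ is the sign $\sigma'(vx)\sigma'(vy)$ of that $2$-path. By Lemma~\ref{lem:2connected} the graph $\hat{G}'$ is connected, so $\varphi$ respects the bipartition and $\varphi(x),\varphi(y)$ land in a common part of $C_{\Four}$, say $\{u_1,u_3\}$ with the labelling of Figure~\ref{fig:C_4}. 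If $\varphi(x)\neq\varphi(y)$, then the two common neighbours $u_2$ and $u_4$ yield respectively a positive $2$-path $u_1u_2u_3$ and a negative $2$-path $u_1u_4u_3$, so both signs are realizable and $\varphi$ extends no matter what the sign of $x\,v\,y$ is.

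The remaining case, $\varphi(x)=\varphi(y)=u_1$, is where the hypothesis on the $4$-cycle is needed and is the main obstacle: the only common neighbours are again $u_2$ and $u_4$, but now both closed $2$-walks $u_1u_2u_1$ and $u_1u_4u_1$ are \emph{positive} (each sign is a square), so extension is possible precisely when the $2$-path $x\,v\,y$ is positive under $\sigma'$. To see that it is, observe that since $\varphi(x)=\varphi(y)=u_1$, the path $x\,u\,y$ maps to the closed $2$-walk $u_1\,\varphi(u)\,u_1$, whose sign is likewise a square and hence positive; by sign preservation the path $x\,u\,y$ is positive under $\sigma'$. As the sign of the whole $4$-cycle $vxuy$ is switching-invariant and equal to the product of the signs of the two complementary $2$-paths $x\,v\,y$ and $x\,u\,y$, positivity of the cycle forces $x\,v\,y$ to be positive as well. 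Thus $\varphi$ extends here too (mapping $v$ to $u_2$, after a switching at $v$ if needed), completing the contradiction. I expect the bookkeeping of signs in this last case, together with the clean statement that in $C_{\Four}$ every closed $2$-walk is positive while between distinct same-part vertices both $2$-path signs occur, to be the crux of the argument.
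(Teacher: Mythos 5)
Your proof is correct: criticality gives a homomorphism $\varphi$ of $\hat{G}-v$ to $C_{\Four}$, Lemma~\ref{lem:2connected} guarantees $\varphi(x)$ and $\varphi(y)$ lie in a common part of $C_{\Four}$, the reduction to the single invariant $\sigma'(vx)\sigma'(vy)$ via a possible switching at $v$ is legitimate, and both cases of the extension argument are sound. However, the paper's route is different: it records the statement as an observation with no written proof, and the sentence following it (``this is generally true for every $(H,\pi)$-critical signed graph and also for any signed graph which admits no homomorphism to a subgraph of itself'') points to a target-independent \emph{folding} argument. Namely, since the $4$-cycle $vxuy$ is positive, the $2$-paths $xvy$ and $xuy$ have equal sign, so after a possible switching at $v$ the edges $vx,vy$ carry the same signs as $ux,uy$; identifying $v$ with $u$ is then an edge-sign preserving homomorphism of $\hat{G}$ onto its proper subgraph $\hat{G}-v$, and composing with the map $\hat{G}-v\to C_{\Four}$ given by criticality contradicts $\hat{G}\not\to C_{\Four}$. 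Your two cases are exactly this fold viewed from the far side of $\varphi$: in both of them the uniform choice $v\mapsto\varphi(u)$ already works, because the sign of $xvy$ equals that of $xuy$, whose image $\varphi(x)\varphi(u)\varphi(y)$ has the same sign again; so the case split, and in particular the $C_{\Four}$-specific fact that distinct vertices in one part of $C_{\Four}$ are joined by $2$-paths of both signs, can be dispensed with. The trade-off: the folding argument never inspects the target, so it proves the stronger claim in the paper's remark verbatim, whereas your argument is tied to $C_{\Four}$; in exchange, your Case 1 isolates a refinement the fold does not reveal, namely that positivity of the $4$-cycle is only needed when $\varphi$ identifies the two neighbours of $v$.
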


In fact, this is generally true for every $(H,\pi)$-critical signed graph and also for any signed graph which admits no homomorphism to a subgraph of itself.

We are now ready to state and prove our main result on the structure of $C_{\Four}$-critical signed graphs.

\subsection{Edge-density of $C_{\Four}$-critical signed graphs}\label{sec:densityC4Critical}

We will use the notion of potential developed in \cite{KY14C} (and then further used in \cite{DP17} and \cite{PS19}) to prove the following.

\begin{theorem}\label{thm:densitystronger}
	If $\hat{G}$ is a $C_{\Four}$-critical signed graph that is not isomorphic to $\hat{W}$, then $$|E(G)|\geq \dfrac{4|V(G)|}{3}.$$
\end{theorem}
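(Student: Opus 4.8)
The plan is to run the Kostochka--Yancey potential method. For a subgraph $H\subseteq G$ I set $\rho(H)=4|V(H)|-3|E(H)|$, so that the desired conclusion is precisely $\rho(G)\le 0$. I first note that the two relevant small critical graphs lie right at the boundary, $\rho(\Gamma)=24-24=0$ and $\rho(\hat W)=28-27=1$, which already explains why $\hat W$ is the sole exception: it is exactly the one with positive potential. So I would assume for contradiction that $\hat G$ is a $C_{\Four}$-critical signed graph with $\hat G\not\cong\hat W$ and $\rho(G)\ge 1$, chosen with $|V(G)|$ minimum and, subject to that, with $|E(G)|$ minimum. By Lemma~\ref{lem:2connected} and Lemma~\ref{lem:3thread}, $\hat G$ is bipartite, $2$-connected, has minimum degree at least $2$, and contains no $3$-thread; hence no two degree-$2$ vertices are adjacent and each degree-$2$ vertex has two neighbours of degree at least $3$, while Observation~\ref{obs:signofpositive4cycle} forbids a degree-$2$ vertex on a $C_{\!\scriptscriptstyle +4}$.

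The engine of the argument is a lower bound on the potential of \emph{proper} subgraphs. Here criticality is crucial: every proper subgraph $R$ of $\hat G$ admits a homomorphism to $C_{\Four}$, which by Theorem~\ref{thm:C4dual} I can certify by exhibiting a switching of $R$ in which no positive edge meets a negative edge. The decisive claim I would establish is that no proper subgraph $R$ with $2\le |V(R)|<|V(G)|$ can have potential below the fixed threshold $\rho(C_{\Four})=4$ (with $\hat W$ minus an edge, which has $\rho=28-24=4$, showing this is sharp). I would prove this by a minimality/identification argument: given a proper subgraph $R$ of too-small potential, fix a homomorphism $\varphi\colon R\to C_{\Four}$, contract $R$ along the fibres of $\varphi$ (equivalently, glue the rest of $G$ onto the four-vertex image of $R$, using that $C_{\Four}$ is vertex- and edge-transitive under switching to place the image compatibly), and verify that the resulting smaller signed graph is still bipartite, simple, $C_{\Four}$-critical, and still violates the bound --- contradicting minimality of $|V(G)|$.

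With the subgraph potential bound in hand, the remaining work is to control the degree-$2$ vertices, and this is exactly what the reducible configurations already isolated in the paper accomplish. A naive discharging with charge $\deg(v)-\tfrac{8}{3}$ fails at degree-$3$ vertices all of whose neighbours have degree $2$, so the point is to exclude such local structure. But a degree-$3$ vertex whose neighbourhood is saturated with degree-$2$ vertices produces a copy of a doubly subdivided $\hat W$ carrying an extra degree-$2$ vertex; Lemmas~\ref{lem:Omega1} and~\ref{lem:Omega2} say that each such configuration either forces a copy of $\hat W$ --- which maps to $C_{\Four}$ by the argument of Proposition~\ref{prop:W^NoMapC4} --- or maps to $C_{\Four}$ outright, and neither can occur inside a minimal counterexample different from $\hat W$. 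Feeding this into the potential count (equivalently, a refined discharging that redistributes $\tfrac13$ from branch vertices to their degree-$2$ neighbours, now legitimate because the obstructing clusters are reducible) yields $2|E(G)|-\tfrac{8}{3}|V(G)|\ge 0$, i.e. $\rho(G)\le 0$, contradicting $\rho(G)\ge 1$.

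The hard part will be the identification/reduction step that underlies the subgraph potential bound. Unlike ordinary colouring, one must carry the switching component $\phi_1$ along with the vertex map $\phi_2$, keep the contracted object simple and bipartite, and ensure the signs of \emph{all} closed walks are preserved after gluing --- otherwise the reduced graph need not even satisfy the no-homomorphism hypotheses. The whole reason $\Omega_1$ and $\Omega_2$ are singled out in advance is that they are precisely the borderline configurations where this contraction is delicate; confirming that they are reducible, rather than giving birth to new sparse critical signed graphs, is where the genuine technical effort lies, and it is what separates the generic counterexample from the genuine exception $\hat W$.
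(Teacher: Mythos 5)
Your proposal follows the same general route as the paper (Kostochka--Yancey potentials, an identification argument to bound potentials of subgraphs, $\Omega_1/\Omega_2$ reducibility feeding a discharging with threshold $\frac{8}{3}$), but its central claim is both unprovable by the sketch you give and inconsistent with the rest of your own plan. You assert that in a minimal counterexample \emph{every} proper subgraph $R$ has potential at least $4$. The identification argument cannot deliver this uniform threshold: if $\hat G=P_2(\hat H)$, i.e.\ $\hat G$ is $\hat H$ plus one new vertex of degree $2$, then $p(\hat H)=p(\hat G)+2$, which is only guaranteed to be at least $3$. This exception is genuine, not an artifact: in the tight critical graph $T_{2}(\tilde{C}_{\!\scriptscriptstyle\, 5})$ (potential $0$), deleting a subdivision vertex leaves a proper subgraph of potential $2$, so nothing in the structure of criticality forces proper subgraphs above $3$ when they arise this way. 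Accordingly, the paper's Lemma~\ref{lem:SubgraphDensity} is a three-tier statement ($\geq 1$ for $\hat G$ itself, $\geq 3$ when $\hat G=P_2(\hat H)$, $\geq 4$ otherwise), and the borderline case $p(\hat H)=3$ with $\hat G=P_2(\hat H)$ survives all the way to Lemma~\ref{lem:2thread-2}, where it is pinned down as $\hat G=P_2(\Omega_1)$ or $P_2(\Omega_2)$ and only then killed by Lemmas~\ref{lem:Omega1} and~\ref{lem:Omega2}. Note the internal inconsistency in your plan: if your uniform bound were available, the $\Omega_1/\Omega_2$ lemmas you invoke later would be superfluous (the subgraph $\hat G_3$ arising in Lemma~\ref{lem:2thread-2} has potential at most $3$ and would already contradict your bound); the fact that you still need them is a symptom that the uniform bound is exactly what cannot be proved directly.

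Second, your proof sketch of the subgraph bound fails at the contraction step. After contracting $R$ along the fibres of $\varphi$, the image is a homomorphic image of $\hat G$ and hence does not map to $C_{\Four}$, but it is \emph{not} $C_{\Four}$-critical in general, and there is no reason it ``still violates the bound'': contraction changes the potential uncontrollably. The paper instead argues that the image either contains a $C_{\!\scriptscriptstyle -2}$ or contains some critical subgraph $\hat G_2$ with fewer vertices than $\hat G$; minimality then gives $p(\hat G_2)\leq 0$ \emph{or} $\hat G_2=\hat W$ with $p(\hat W)=1$ --- your sketch ignores the possibility that the critical subgraph of the contraction is $\hat W$ itself, which is precisely how $\hat W$ enters as the exception --- and then one must lift $\hat G_2$ back into $\hat G$ (the graph $\hat G_3$ built by choosing preimage edges), carry out the accounting $p(\hat G_3)=p(\hat G_2)-p(\hat X)+p(\hat H)\leq p(\hat H)-3$, and contradict a \emph{maximal} choice of the low-potential subgraph $\hat H$. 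Finally, your path to the discharging is incomplete: ``a degree-$3$ vertex with two degree-$2$ neighbours produces a doubly subdivided $\hat W$'' is not a direct consequence of anything you established. One first needs Lemma~\ref{lem:2thread-2} (identify the two non-thread neighbours, find a critical subgraph of the image, and apply the three-tier potential lemma), and then the exclusion of the unsigned configurations $\Theta_1$ (that is, $K_{2,3}$) and $\Theta_2$ (two $4$-cycles sharing an edge) via Claims~\ref{clm:s} and~\ref{clm:theta}, to obtain Corollary~\ref{lem:3_2vertex}; your proposal never addresses these configurations, and without them the discharging hypothesis that each degree-$3$ vertex has at most one degree-$2$ neighbour is unjustified.
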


Thus the natural \emph{potential} function of graphs we may work with is: $$p(G)=4|V(G)|-3|E(G)|.$$ 

We note that the potential of a signed graph is the potential of  its underlying graph.

\begin{observation}\label{obs:potential4vertices}
We have $p(K_1)=4$, $p(K_2)=5$, $p(P_3)=6$ and $p(C_{\!\scriptscriptstyle \, 4})=4$. Thus any signed bipartite graph on at most $4$ vertices has potential at least $4$.
\end{observation}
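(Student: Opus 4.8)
The plan is to verify the four displayed equalities by direct substitution into the definition $p(G)=4|V(G)|-3|E(G)|$, and then to deduce the concluding sentence by a short monotonicity argument. For the equalities I would simply count vertices and edges: $K_1$ has one vertex and no edge, $K_2$ has two vertices and one edge, $P_3$ has three vertices and two edges, and $C_{\!\scriptscriptstyle \, 4}$ has four vertices and four edges, giving $4$, $5$, $6$, and $4$ respectively.

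For the final claim, I would first recall (as noted in the line immediately preceding the observation) that the potential of a signed graph equals the potential of its underlying simple graph, so the signature plays no role and it suffices to bound $p$ over all \emph{simple bipartite} graphs on at most $4$ vertices. The key point is that $p$ is strictly decreasing in the number of edges: deleting a single edge raises the potential by exactly $3$. Hence, among all bipartite graphs on a fixed vertex set, the minimum of $p$ is attained at an edge-maximal one, that is, at a complete bipartite graph. It therefore suffices to evaluate $p$ on the complete bipartite graphs on $1$, $2$, $3$, and $4$ vertices, rather than enumerating every bipartite graph.

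Carrying this out, the relevant graphs are $K_1$, $K_{1,1}=K_2$, $K_{1,2}=P_3$, and, on four vertices, $K_{1,3}$ (three edges, potential $7$) together with $K_{2,2}=C_{\!\scriptscriptstyle \, 4}$ (four edges, potential $4$). Since $K_{2,2}$ is the densest bipartite graph on four vertices, every bipartite graph on at most four vertices has potential at least $\min\{4,5,6,4\}=4$, as claimed. There is no real obstacle here: the statement reduces to a finite computation, and the only remark worth isolating is the edge-monotonicity of $p$, which is precisely what lets us restrict attention to the complete bipartite graphs and conclude with the minimum value $4$.
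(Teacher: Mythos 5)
Your proposal is correct, and it matches the intended justification: the paper states this as an observation without proof, the four equalities being immediate from $p(G)=4|V(G)|-3|E(G)|$, and the concluding bound being a finite check over bipartite graphs on at most four vertices. Your one organizing remark---that removing an edge increases $p$ by $3$, so it suffices to examine the complete bipartite graphs $K_1$, $K_{1,1}$, $K_{1,2}$, $K_{1,3}$, and $K_{2,2}=C_{\!\scriptscriptstyle\,4}$---is a clean way to package exactly that check and introduces nothing beyond what the paper's definition already supplies.
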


In the rest of this section,  we let $\hat{G}=(G, \sigma)$ be a minimum counterexample to Theorem~\ref{thm:densitystronger}. That is to say, $\hat{G}$ is a $C_{\Four}$-critical signed graph which is not isomorphic to $\hat{W}$, it satisfies $p(\hat{G})\geq 1$, and for any signed graph $\hat{H}$, $\hat{H}\neq \hat{W}$, with $|V(\hat{H})|< |V(\hat{G})|$ satisfying $p(\hat{H})\geq 1$, $\hat{H}$ admits a homomorphism to $C_{\Four}$.

Given a signed graph $\hat{H}$, we denote a signed graph obtained from $\hat{H}$ by adding a new vertex and joining it to two vertices of $\hat{H}$ (where the signs of the two new edges are arbitrary) by $P_2(\hat{H})$. The notation $P_2(\hat{H})$ here follows previous works (\cite{DP17}, \cite{PS19})) where $2$ denotes the length of the path. To denote a path as a graph we use $P_n$ where $n$ is the number of the vertices. 

In the following lemma, we list the plausible potential of the subgraphs of the minimum counterexample $\hat{G}$.

\begin{lemma}\label{lem:SubgraphDensity}
Let $\hat{G}=(G, \sigma)$ be a minimum counterexample to Theorem~\ref{thm:densitystronger} and let $\hat{H}$ be a subgraph of $\hat{G}$. Then 
\begin{enumerate}
\item $p(\hat{H})\geq 1$ if $\hat{G}=\hat{H}$;
\item $p(\hat{H})\geq 3$ if $\hat{G}=P_2(\hat{H})$;
\item $p(\hat{H})\geq 4$ otherwise.
\end{enumerate}
\end{lemma}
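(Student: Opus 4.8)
The plan is to treat the three items in increasing order of difficulty, the first two being bookkeeping and the third being the heart of the matter.

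For item (1) there is nothing to prove: it is exactly the standing assumption that the minimum counterexample $\hat G$ satisfies $p(\hat G)\geq 1$. For item (2), if $\hat G=P_2(\hat H)$ then $\hat G$ is obtained from $\hat H$ by adding one vertex together with its two incident edges, so $|V(G)|=|V(H)|+1$ and $|E(G)|=|E(H)|+2$; a substitution into $p(G)=4|V(G)|-3|E(G)|$ gives $p(\hat H)=p(\hat G)+2\geq 3$. Both items thus reduce to a one-line count.

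For item (3) I first clear away the easy regimes. If $V(H)=V(G)$, then $\hat H$ is obtained from $\hat G$ by deleting at least one edge, so $p(\hat H)=p(\hat G)+3(|E(G)|-|E(H)|)\geq 1+3=4$. If $|V(H)|\leq 4$, then Observation~\ref{obs:potential4vertices} already gives $p(\hat H)\geq 4$. This leaves the substantive case $5\leq |V(H)|\leq |V(G)|-1$, which I would attack by contradiction: assume $p(\hat H)\leq 3$ and produce a smaller graph violating minimality. Since $\hat H$ is a proper subgraph of the $C_{\Four}$-critical graph $\hat G$, criticality yields a homomorphism $\varphi=(\varphi_1,\varphi_2)\colon \hat H\to C_{\Four}$. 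Form $\hat G'$ from $\hat G$ by deleting $V(H)$, inserting a fresh copy of $C_{\Four}$, and re-attaching every edge $uw$ with $u\in V(H)$ and $w\notin V(H)$ to the vertex $\varphi_2(u)$ of the new $C_{\Four}$ (with its sign switched by $\varphi_1(u)$), merging any equal-sign parallel edges so produced.

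The arithmetic is then designed to land exactly on the value $4$. One has $|V(G')|=|V(G)|-|V(H)|+4<|V(G)|$ because $|V(H)|\geq 5$, and, writing $r\geq 0$ for the number of merged parallel edges and using $p(C_{\Four})=4$, a direct count gives $p(\hat G')=p(\hat G)-p(\hat H)+4+3r$. Under the assumption $p(\hat H)\leq 3$ this yields $p(\hat G')\geq 1-3+4=2$. In particular $p(\hat G')\geq 2>1=p(\hat W)$, so $\hat G'\neq \hat W$, while $p(\hat G')\geq 1$ and $|V(G')|<|V(G)|$, so minimality forces $\hat G'\to C_{\Four}$. Composing such a map with $\varphi$ on the inserted copy of $C_{\Four}$, and keeping it unchanged on $V(G)\setminus V(H)$, lifts to a homomorphism $\hat G\to C_{\Four}$, contradicting criticality. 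Hence $p(\hat H)\geq 4$. The same computation also explains why the bound cannot be pushed to $5$: when $p(\hat H)=4$ one only gets $p(\hat G')\geq 1$, and the case $\hat G'=\hat W$ is then no longer excluded, which is precisely the role of the exceptional graph.

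The step I expect to fight hardest for is checking that $\hat G'$ is a legitimate (simple) signed graph to which minimality may be applied, i.e. that the identification does not create a negative $2$-cycle. Equal-sign parallel edges are harmless, since they are merged and only improve the potential through $r$; but two edges $u_1w$ and $u_2w$ with $\varphi_2(u_1)=\varphi_2(u_2)$ whose switched signs disagree would produce a negative digon and destroy the argument. I would control this using the freedom in the choice of $\varphi$ — independent switchings and the automorphisms of $C_{\Four}$ on the components of $\hat H$, together with the $2$-connectivity of $\hat G$ (Lemma~\ref{lem:2connected}) and the absence of short negative cycles in $\hat G$ — to argue that any such sign conflict would already force a forbidden configuration inside $\hat G$. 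Verifying this simplicity condition, rather than the potential count, is where the real content of the lemma lies.
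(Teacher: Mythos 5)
Your items (1) and (2), the two easy regimes of item (3), and the construction of $\hat{G}'$ together with the potential count $p(\hat{G}')=p(\hat{G})-p(\hat{H})+4+3r$ are all sound; indeed $\hat{G}'$ is essentially the quotient graph $\hat{G}_1$ used in the paper's proof. The gap is the single step ``minimality forces $\hat{G}'\to C_{\Four}$.'' Being a minimum counterexample to Theorem~\ref{thm:densitystronger} constrains only $C_{\Four}$-\emph{critical} signed graphs on fewer vertices: each such graph is either $\hat{W}$ or has potential at most $0$. It does \emph{not} say that every smaller signed graph with potential at least $1$ and different from $\hat{W}$ maps to $C_{\Four}$, and that blanket principle is false, because potential is not inherited by subgraphs (deleting edges raises it, and deleting vertices of degree at most one lowers it). For instance, $\Gamma$ of Figure~\ref{fig:Gamma} is $C_{\Four}$-critical with $p(\Gamma)=0$, and attaching a pendant path of length $t$ to it produces a connected signed graph of potential $t\geq 1$, not isomorphic to $\hat{W}$, which still does not map to $C_{\Four}$. (The paper's own informal restatement of minimality reads the way you use it, but its proof only ever applies minimality to critical subgraphs, which is the correct, weaker form.) So knowing $p(\hat{G}')\geq 2$, $|V(\hat{G}')|<|V(\hat{G})|$ and $\hat{G}'\neq\hat{W}$ does not give $\hat{G}'\to C_{\Four}$: you must additionally exclude that $\hat{G}'$ contains a negative digon $C_{\!\scriptscriptstyle -2}$, a copy of $\hat{W}$ as a proper subgraph, or any $C_{\Four}$-critical subgraph of potential at most $0$; your bound $p(\hat{G}')\geq 2$ only rules out $\hat{G}'$ being \emph{equal} to $\hat{W}$, and says nothing about its subgraphs.

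This exclusion is precisely the machinery your proposal lacks and the paper supplies, via an extremal choice you do not make: $\hat{H}$ is taken to maximize $|V(\hat{H})|+|E(\hat{H})|$ among subgraphs of potential at most $3$ (which in particular makes $\hat{H}$ induced --- a property your construction also tacitly needs, since edges of $G$ joining two vertices of $V(H)$ but lying outside $E(H)$ have no home in your $\hat{G}'$). With that choice, both bad events die: if the quotient has a negative digon, then $\hat{G}$ has a negative $2$-path $\hat{P}$ joining two vertices of $\hat{H}$ through an outside vertex, and $\hat{H}+\hat{P}$ is a strictly larger subgraph of potential $p(\hat{H})-2$, contradicting maximality (the paper's Case 1); if the quotient contains a $C_{\Four}$-critical subgraph $\hat{G}_2$ --- which by the correct use of minimality is $\hat{W}$ or has $p(\hat{G}_2)\leq 0$ --- the paper lifts $\hat{G}_2$ back into $\hat{G}$ as a subgraph $\hat{G}_3\supsetneq\hat{H}$ with $p(\hat{G}_3)\leq p(\hat{H})-3\leq 0$, again contradicting maximality (Case 2). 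Your own diagnosis that the digon check is where the content lies is only half right: the plan to avoid digons by exploiting the freedom in $\varphi$ is unsubstantiated (nothing guarantees that some homomorphism of $\hat{H}$ avoids every sign conflict, and the paper never attempts this), and even if it succeeded, the critical-subgraph obstruction would remain untreated. As written, item (3) is not proved.
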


\begin{proof}
The first claim is our assumption on $\hat{G}$. If $\hat{G}=P_2(\hat{H})$, then $p(\hat{G})=p(\hat{H})+4\times 1-3\times 2$, and then, since $p(\hat{G})\geq 1$, we have $p(\hat{H})\geq 3$. We now prove that for any other subgraph of $\hat{G}$, $p(\hat{H})\geq 4$.

Suppose to the contrary that $\hat{G}$ contains a proper subgraph $\hat{H}$ which does not satisfy $\hat{G}=P_2(\hat{H})$, and satisfies $p(\hat{H})\leq 3$. Among all such subgraphs, let $\hat{H}$ be chosen so that $|V(\hat{H})|+|E(\hat{H})|$ is maximized. As adding an edge to a graph only decreases the potential, the assumption of the maximality implies that $\hat{H}$ is an induced subgraph of $\hat{G}$. 

By Observation~\ref{obs:potential4vertices}, $|V(\hat{H})|\geq 5$. As $\hat{G}$ is $C_{\Four}$-critical and $\hat{H}$ is a proper  subgraph, there is a homomorphism $\varphi$ of $\hat{H}$ to $C_{\Four}$. Since $C_{\Four}$ is vertex transitive, we may assume that $\varphi$ preserves the bipartition of $\hat{H}$ induced by the bipartition of $\hat{G}$. This is automatic if $H$ is connected, but important if $H$ is not connected.
 
Observe that the mapping $\varphi$ may have applied switching on some vertices of $\hat{H}$. Applying switching on the same set of vertices of $\hat{G}$, we get a switching equivalent signed graph. For simplicity, and without loss of generality, we may assume that $\hat{G}$ was given with this signature already. In other words, we may assume, without loss of generality, that $\varphi_1(x)=+$ for every vertex $x$ of $\hat{H}$ (recall that $\varphi=(\varphi_1, \varphi_2)$).

Define $\hat{G}_1$ to be a signed (multi)graph obtained from $\hat{G}$ by first identifying vertices of $\hat{H}$ which are mapped to the same vertex of $C_{\Four}$ under $\varphi$, and then identifying all parallel edges of the same sign. Observe that $\hat{G}_1$ is a homomorphic image of $\hat{G}$ and that $\varphi(\hat{H})$ is (isomorphic to) the image of $\hat{H}$ in this mapping. Recall that in the mapping of $\hat{G}$ to $\hat{G}_1$ the bipartition is preserved. Therefore, $\hat{G}_1$ is bipartite. Since homomorphism is an associative relation, and since $\hat{G}\not\to C_{\Four}$, we have $\hat{G}_1\not\to C_{\Four}$.  This can only be for one of two reasons: Either $\hat{G}_1$ contains a $C_{\!\scriptscriptstyle -2}$, or $\hat{G}_1$ contains a $C_{\Four}$-critical subgraph. We consider the two cases separately:

\textbf{Case 1.} $\hat{G}_1$ contains a $C_{\!\scriptscriptstyle -2}$.

This implies that $\hat{G}$ contains a negative path $\hat{P}$ of length $2$ with both endpoints in $\hat{H}$ and with its internal vertex in $V(\hat{G})\setminus V(\hat{H})$. We have that 
\begin{equation}\label{equ:1}
p(\hat{H}+\hat{P})=p(\hat{H})+4\times 1-3\times 2=p(\hat{H})-2< p(\hat{H}).
\end{equation}
Recall that $\hat{H}$ is a maximum proper subgraph satisfying that $\hat{G}\neq P_2(\hat{H})$ and $p(\hat{H})\leq 3$. Noting that $\hat{H}\subsetneq \hat{H}+\hat{P}$ and $\hat{H}+\hat{P}$ is a subgraph of $\hat{G}$, and by the maximality of $\hat{H}$, there are two possibilities: either $\hat{H}+\hat{P}$ is not a proper subgraph of $\hat{G}$, i.e., $\hat{G}=\hat{H}+\hat{P}$, or $\hat{G}=P_2(\hat{H}+\hat{P})$.
The former case is impossible as $\hat{G}\neq P_2(\hat{H})$. So $\hat{G}=P_2(\hat{H}+\hat{P})$ and then 
\begin{equation}\label{equ:2}
p(\hat{H}+\hat{P})=p(\hat{G})-4\times 1+3\times 2\geq 1-4+6=3\geq p(\hat{H}),
\end{equation} 
which is in contradiction with (\ref{equ:1}).

\textbf{Case 2.} $\hat{G_1}$ contains a $C_{\Four}$-critical subgraph $\hat{G}_2$. 

We classify the vertices of $\hat{G}_2$ into two parts: Those of the images of $V(\hat{H})$, and the remaining vertices.  We denote the former set by $X_1$, more precisely $X_1=\varphi(V(\hat{H}))\cap V(\hat{G}_2)$, and the latter set by $A$, more precisely $A=V(\hat{G}_2)\setminus X_1$.
The subgraph induced by $X_1$ is denoted by $\hat{X}$, in other words $\hat{X}=\varphi(\hat{H})\cap \hat{G}_2$. Observe that since $\hat{G}_2 \not \to C_{\Four}$ and $\varphi(\hat{H})\subset C_{\Four}$, $A\neq \emptyset$. 

Since $|V(\hat{H})|\geq 5$ and $\varphi$ is a mapping of $\hat{H}$ to $C_{\Four}$, at least two vertices are identified and, therefore, $|V(\hat{G}_2)|\leq |V(\hat{G}_1)|<|V(\hat{G})|$. As $\hat{G}$ is a minimum counterexample to Theorem~\ref{thm:densitystronger}, we have either $p(\hat{G}_2)\leq 0$ or $\hat{G}_2=\hat{W}$. Since $p(\hat{W})=1$, in all cases we have $p(\hat{G}_2)\leq 1$.

We now define a subgraph $\hat{G}_3$ of $\hat{G}$ as follows: Vertices of $\hat{G}_3$ are those vertices of $\hat{G}$ each of which is either a vertex of $\hat{G}_2$ or a vertex of $\hat{H}$. That is to say $V(\hat{G}_3) =A\cup V(\hat{H})=\{V(\hat{G}_2) \cup V(\hat{H})\} \setminus X_1$. To give the edge set of $\hat{G}_3$, we first choose a set $E'$ of the edges of $\hat{G}$ as follows: If a vertex $u\in A$ is adjacent to a vertex $v\in X_1$, then we choose a vertex $v'\in V(\hat{H})$ such that first of all $\varphi(v')=v$, second of all $uv'\in E(G)$. From the construction of $\hat{G}_1$, it is clear that there is such a vertex $v'$. We note that, there might be more than one choice for $v'$, in which case we select exactly one at random, and then let $uv'$ be an edge in $E'$. The edge set of $\hat{G}_3$ is then defined to be the set of edges of $\hat{G}$ that are either induced by $A$, or by $V(H)$ or edges in $E'$, with signature induced from the fixed signature of $\hat{G}$. In other words, $E(\hat{G}_3)=E(\hat{G}_2- \hat{X}) + E(\hat{H}) + E'$. Since each connection between the vertices of $\hat{X}$ and $\hat{G}_2-\hat{X}$ has a unique corresponding edge in $E'$, it follows that $|E(\hat{G}_3)|=|E(\hat{G}_2)|-|E(\hat{X})|+|E(\hat{H})|$ and, therefore, $p(\hat{G}_3)=p(\hat{G}_2)-p(\hat{X})+p(\hat{H})$. 

Since $\hat{G}$ and $\hat{G}_2$ are both $C_{\Four}$-critical signed graphs, $\hat{G}_2$ is not a subgraph of $\hat{G}$, that is to say $\hat{X}\neq \emptyset$. As $\hat{X}$ is a subgraph of $C_{\Four}$, by Observation~\ref{obs:potential4vertices}, $p(\hat{X})\geq 4$. Then we obtain that 
\begin{equation}\label{equ:3}
p(\hat{G}_3)=p(\hat{G}_2)-p(\hat{X})+p(\hat{H})\leq1-4+p(\hat{H})=p(\hat{H})-3\leq 0.
\end{equation}
Since $\hat{G}_3$ is a subgraph of $\hat{G}$ and $\hat{H}\subsetneq \hat{G}_3$ (because $A\neq \emptyset$), by the maximality of $\hat{H}$ and noting that $p(\hat{G}_3)< p(\hat{H})$,  either $\hat{G}=\hat{G}_3$ or $\hat{G}=P_2(\hat{G}_3)$.
If $\hat{G}=\hat{G}_3$, then $p(\hat{G}_3)\geq 1$; if $\hat{G}=P_2(\hat{G}_3)$, then $p(\hat{G}_3)\geq 3$, each of which is contradicting (\ref{equ:3}).
\end{proof}

Towards proving Theorem~\ref{thm:densitystronger}, next we show that the underlying graph $G$ of the minimum counterexample $\hat{G}$ does not contain two $4$-cycles sharing edges. 

\begin{figure}[htbp]
		\centering
		\begin{minipage}[t]{.3\textwidth}
			\centering
		\begin{tikzpicture}
		[scale=.26]
		\foreach \i in {1,2,3,4}
		{
			\draw[rotate=90*(\i-1)] (0, 3.5) node[circle, draw=black!80, inner sep=0mm, minimum size=2mm] (x_\i){\scriptsize $x_{_\i}$};
		}
	\foreach \i in {0}
	{
		\draw[rotate=90*(\i)-30] (0, 0) node[circle, draw=black!80, inner sep=0mm, minimum size=2mm] (x_\i){\scriptsize $x_{_\i}$};
	}
	
		\foreach \i/\j in {2/1, 1/4, 2/3, 3/4, 2/0, 0/4}
		{
			\draw[line width=0.5mm, gray] (x_\i) -- (x_\j);
			}
		\end{tikzpicture}
		\caption{ $\Theta_1$}
		\label{fig:2C4s}
		\end{minipage}
		\begin{minipage}[t]{.3\textwidth}
		\centering
		\begin{tikzpicture}
		[scale=.3]
		\foreach \i in {1,2}
		{
			\draw[rotate=60*(\i-1)-30] (0, 3.5) node[circle, draw=black!80, inner sep=0mm, minimum size=2mm] (x_\i){\scriptsize $x_{_\i}$};
		}
		\foreach \i in {4,5}
		{
			\draw[rotate=60*(\i-1)-30] (0, 3.5) node[circle, draw=black!80, inner sep=0mm, minimum size=2mm] (x_\i){\scriptsize $x_{_\i}$};
		}
		\foreach \i in {3}
		{
			\draw[rotate=60*(\i-1)-30] (0, 3.5) node[circle, draw=black!80, inner sep=0mm, minimum size=2mm] (x_\i){\scriptsize $x_{_\i}$};
		}
		\foreach \i in {6}
		{
			\draw[rotate=60*(\i-1)-30] (0, 3.5) node[circle, draw=black!80, inner sep=0mm, minimum size=2mm] (x_\i){\scriptsize $x_{_\i}$};
		}
		\foreach \i/\j in {3/6, 1/2, 4/5, 2/3, 1/6, 3/4, 5/6}
		{
			\draw[line width=0.5mm, gray] (x_\i) -- (x_\j);
		}
		\end{tikzpicture}
		\caption{ $\Theta_2$}
		\label{fig:2C4}
	\end{minipage}
\end{figure}

\begin{claim}\label{clm:s}	
Given a minimum counterexample $\hat{G}$ to Theorem~\ref{thm:densitystronger}, the underlying graph $G$ does not contain the graph $\Theta_1$ of Figure~\ref{fig:2C4s} as a subgraph. 
\end{claim}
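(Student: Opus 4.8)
The plan is to observe that $\Theta_1$ is nothing other than $K_{2,3}$, with the two vertices $x_2,x_4$ forming the part of size $2$ and $x_0,x_1,x_3$ forming the part of size $3$, and to turn this into a pure potential argument against the minimum counterexample $\hat{G}$. The first step is to compute the potential of this configuration: since $\Theta_1$ has $5$ vertices and $6$ edges, $p(\Theta_1)=4\cdot 5-3\cdot 6=2$. The whole proof will hinge on comparing this value $2$ with the lower bounds supplied by Lemma~\ref{lem:SubgraphDensity}.

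So suppose for contradiction that $G$ contains $\Theta_1$ as a subgraph, and let $\hat{H}$ be the corresponding signed subgraph of $\hat{G}$ (with the induced signature) on the six edges of $\Theta_1$. The key observation I would use is that $\hat{H}$ is a signed bipartite graph having a part, namely $\{x_2,x_4\}$, of size $2$; as noted earlier in the paper, \emph{any} signed bipartite graph with at most two vertices on one of its two parts maps to $C_{\Four}$, regardless of the signature. Hence $\hat{H}\to C_{\Four}$ for whatever signature $\hat{G}$ induces on it. Because $\hat{G}\not\to C_{\Four}$, this rules out the possibility $\hat{G}=\hat{H}$, i.e.\ the first case of Lemma~\ref{lem:SubgraphDensity} cannot occur.

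It then remains to dispose of the other two cases of Lemma~\ref{lem:SubgraphDensity}, and here the potential computation finishes the job immediately. If $\hat{G}=P_2(\hat{H})$, the lemma forces $p(\hat{H})\geq 3$; and in the remaining ``otherwise'' case it forces $p(\hat{H})\geq 4$. Both of these contradict $p(\hat{H})=p(\Theta_1)=2$. Since the three cases of Lemma~\ref{lem:SubgraphDensity} are exhaustive for a subgraph $\hat{H}$ of $\hat{G}$, and all three have been excluded, we conclude that $\Theta_1$ cannot be a subgraph of $G$.

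I do not expect a genuine obstacle here: once $\Theta_1$ is recognized as $K_{2,3}$, the argument is essentially bookkeeping against Lemma~\ref{lem:SubgraphDensity}. The only point that needs a little care is that the mapping $\hat{H}\to C_{\Four}$ must hold for \emph{every} signature $\hat{G}$ could induce on the configuration (not just one), so that $\hat{G}=\hat{H}$ is excluded no matter what; this is exactly why invoking the part-size-$\leq 2$ fact (which is signature-independent) rather than checking individual signed copies is the clean move. After that, the inequality $2<3$ does all the remaining work.
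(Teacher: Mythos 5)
Your proof is correct, and it diverges from the paper's at the decisive step. The paper follows the same potential bookkeeping: $p(\Theta_1)=2$ rules out the cases $\hat{G}=P_2(\hat{H})$ and ``otherwise'' of Lemma~\ref{lem:SubgraphDensity}, forcing $\hat{G}=(\Theta_1,\sigma)$ for some signature $\sigma$. But to kill that last case the paper argues signature-by-signature: among the three $4$-cycles of $\Theta_1$ each edge lies in exactly two of them, so at least one $4$-cycle must be positive, and since every $4$-cycle of $\Theta_1$ passes through a degree-$2$ vertex ($x_0$, $x_1$, or $x_3$), this contradicts Observation~\ref{obs:signofpositive4cycle} (no degree-$2$ vertex of a $C_{\Four}$-critical signed graph lies on a $C_{\!\scriptscriptstyle +4}$). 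You instead invoke the signature-independent fact, stated earlier in the paper, that any signed bipartite graph with a part of size at most $2$ maps to $C_{\Four}$; since $\Theta_1\cong K_{2,3}$, this immediately contradicts $\hat{G}\not\to C_{\Four}$ no matter what $\sigma$ is. Your route is cleaner for this particular configuration, avoiding any parity analysis of cycle signs, though it leans on an ``easily verified'' unproved assertion of the paper, whereas the paper's argument exercises a forbidden-configuration tool (positive $4$-cycles through degree-$2$ vertices) of independent use. Both are complete and sound.
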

	
\begin{proof}
By contradiction, assume $\Theta_1$ is a subgraph of $G$ and let $x_0, x_1, \ldots, x_4$ be the labeling of its vertices in $G$ as well. Observe that $p(\Theta_1)=2$. Thus, by Lemma~\ref{lem:SubgraphDensity}, $G=(\Theta_1, \sigma)$ for some signature $\sigma$. We note that there are three $4$-cycles in $\Theta_1$, of which at least one must be a positive $4$-cycle. By Observation~\ref{obs:signofpositive4cycle} and as $d(x_0)=d(x_1)=d(x_3)=2$, no signature on $\Theta_1$ would result in a $C_{\Four}$-critical signed graph. 
\end{proof}

\begin{claim}\label{clm:theta}
Given a minimum counterexample $\hat{G}$ to Theorem~\ref{thm:densitystronger}, the underlying graph $G$ does not contain the graph $\Theta_2$ of Figure~\ref{fig:2C4} as a subgraph. 
\end{claim}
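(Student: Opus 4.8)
The plan is to first record that $p(\Theta_2)=4\cdot 6-3\cdot 7=3$, and to fix notation: $\Theta_2$ is the theta graph consisting of the two $4$-cycles $Q_1=x_6x_1x_2x_3$ and $Q_2=x_3x_4x_5x_6$ sharing the chord $x_3x_6$, with $x_3,x_6$ of degree $3$ and $x_1,x_2,x_4,x_5$ of degree $2$; its bipartition is $\{x_1,x_3,x_5\}\cup\{x_2,x_4,x_6\}$. Since $p(\Theta_2)=3<4$, Lemma~\ref{lem:SubgraphDensity} leaves exactly two possibilities for a minimum counterexample containing $\Theta_2$, namely $\hat{G}=\Theta_2$ or $\hat{G}=P_2(\Theta_2)$, and the goal is to rule out both.

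For $\hat{G}=\Theta_2$: the only two $4$-cycles are $Q_1$ and $Q_2$, each carrying a degree-$2$ vertex, so Observation~\ref{obs:signofpositive4cycle} forces both to be negative. As $\{Q_1,Q_2\}$ is a basis of the cycle space, this pins down the signature up to switching, and a convenient representative is the one making the chord $x_3x_6$ negative and every other edge positive. Then the unique negative edge is $x_3x_6$, so no positive edge is adjacent to a negative edge at each of its ends; by Theorem~\ref{thm:C4dual}, $\hat{G}$ maps to $C_{\Four}$, contradicting criticality.

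For $\hat{G}=P_2(\Theta_2)$, the added degree-$2$ vertex $v$ joins two vertices of one part. Using the two symmetries of $\Theta_2$ (swapping the poles $x_3\leftrightarrow x_6$ and swapping the two $4$-cycles), I would reduce the six attachment positions to two orbits, separated by the invariant ``number of common neighbours of the two neighbours of $v$''. In \emph{Type~I} (two common neighbours, say $v$ adjacent to $x_1,x_3$), the vertices $\{x_1,x_3\}$ together with $\{x_2,x_6,v\}$ span a $K_{2,3}$ whose three $4$-cycles $Q_1$, $C_a=vx_1x_2x_3$, $C_b=vx_1x_6x_3$ obey the parity relation $C_b=Q_1\,\triangle\,C_a$, whence the product of their three signs is $+$. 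But $v$ (degree $2$) lies on $C_a,C_b$ and $x_2$ (degree $2$) lies on $Q_1$, so Observation~\ref{obs:signofpositive4cycle} forces all three negative, giving a sign product $(-)^3=-$, a contradiction; thus Type~I cannot occur at all.

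In \emph{Type~II} (one common neighbour, say $v$ adjacent to $x_1,x_5$ with common neighbour $x_6$), the three $4$-cycles are $Q_1$, $Q_2$ and $C_c=vx_1x_6x_5$; since $C_c$ is the only cycle through $v$, they are independent and form a cycle basis, so demanding all three negative (forced by the degree-$2$ vertices $x_2,x_4,v$) determines the signature up to switching. A direct identification then shows this signed graph is switching-isomorphic to $\hat{W}$ — indeed $\hat{W}$ is precisely $\Theta_2$ with such a vertex attached, with $x_6$ playing the role of the degree-$3$ apex adjacent to all of $x_1,x_3,x_5$. As $\hat{G}\ne\hat{W}$ by hypothesis, this case is excluded as well, completing the proof. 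I expect the main obstacle to be exactly this $P_2(\Theta_2)$ analysis: correctly collapsing the six attachment positions to the two genuine types, and, for Type~II, verifying that the forced signature really is $\hat{W}$ and not some new signed graph that maps to $C_{\Four}$. The parity obstruction disposing of Type~I is the cleanest ingredient, so I would present it first.
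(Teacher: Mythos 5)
Your proposal is correct, but it runs on different machinery than the paper's. The paper disposes of both cases with Lemma~\ref{lem:3thread}: $\Theta_2$ itself contains a $3$-thread (e.g.\ $x_3x_2x_1x_6$), and in $P_2(\Theta_2)$ the new vertex must be adjacent to one of $x_1,x_2$ and to one of $x_4,x_5$, since otherwise a $3$-thread survives; together with bipartiteness and symmetry this forces exactly your Type~II attachment, and the paper then simply cites the ``moreover'' part of Proposition~\ref{prop:W^NoMapC4} (uniqueness, up to switching, of the signature on $W$ that does not map to $C_{\Four}$) to conclude $\hat{G}\cong\hat{W}$. You instead lean on Observation~\ref{obs:signofpositive4cycle} throughout: in the six-vertex case you force both $4$-cycles negative and invoke Theorem~\ref{thm:C4dual} to produce a homomorphism to $C_{\Four}$ (the paper never needs the duality here); you exclude Type~I by a clean $K_{2,3}$ sign-parity obstruction which the paper never needs, because $3$-threads already rule out those attachments; and in Type~II you re-derive, via the forced negativity of the basis $\{Q_1,Q_2,C_c\}$, essentially the uniqueness statement that Proposition~\ref{prop:W^NoMapC4} already supplies. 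The paper's route is shorter because it reuses two results already established; yours is more self-contained on the signature side, and the Type~I parity argument is a nice independent observation.

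Two small repairs are needed in your Type~II step. First, your justification that $\{Q_1,Q_2,C_c\}$ is a cycle basis is wrong as stated: $C_c$ is \emph{not} the only cycle through $v$ (there are three $6$-cycles through $v$); it is the only $4$-cycle through $v$. Independence holds for a simpler reason — each of the three cycles contains an edge lying on neither of the other two — and since the cycle space has dimension $9-7+1=3$, they form a basis. Second, the conclusion that the forced signature is switching-isomorphic to $\hat{W}$ needs the (easily checked) fact that all three facial $4$-cycles of $\hat{W}$ are negative, each carrying exactly one negative edge; only then does agreement on a cycle basis give switching equivalence via Zaslavsky's theorem. You wave at this with ``a direct identification,'' and it is true, but it should be stated — or short-circuited by citing Proposition~\ref{prop:W^NoMapC4} as the paper does.
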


\begin{proof}

By contradiction, assume $\Theta_2$ is a subgraph of $G$ and let $x_1, x_2, \ldots, x_6$ be its vertices in $G$ as well. Observe that $p(\Theta_2)=3$, thus, by Lemma~\ref{lem:SubgraphDensity}, either $G$ has only six vertices, or it has seven vertices and $G=P_2(\Theta_2)$. By Lemma~\ref{lem:3thread}, no signature on $\Theta_2$ would result in a $C_{\Four}$-critical signed graph. If we add one or more edges to $\Theta_2$, then we will have a graph on 6 vertices and at least $\frac{4}{3}\times 6=8$ edges. This cannot form a counterexample. We note that after adding an edge one may assign a signature to get the only $C_{\Four}$-critical signed graph on six vertices $\Gamma$.

The remaining possibility is that $G=P_2(\Theta_2)$. Let $w$ be the added vertex.
If $w$ is not adjacent to one of $x_1$ or $x_2$, then we have a contradiction by Lemma~\ref{lem:3thread}. Similarly, $w$ must also be adjacent to one of $x_4$ and $x_5$. As $G$ is bipartite and by the symmetries of $\Theta_2$, we may assume that $w$ is adjacent to $x_1$ and $x_5$. Thus the underlying graph of $G$ is the same as that of $\hat{W}$, and by Proposition~\ref{prop:W^NoMapC4} it must be (switching) isomorphic to $\hat{W}$.
\end{proof}

In the next lemma, we imply further structure on the neighborhood of a 2-thread.

\begin{lemma}\label{lem:2thread-2}
Let $vv_1u$ be a $2$-thread in $\hat{G}$. Suppose that $v$ is a vertex of degree 3 and let $v_2, v_3$ be the other two neighbors of $v$. Then the path $v_2vv_3$ must be contained in a negative $4$-cycle in $\hat{G}$.
\end{lemma}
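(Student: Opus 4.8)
The plan is to argue by contradiction: assume the $2$-path $v_2vv_3$ lies in no negative $4$-cycle and deduce that $\hat{G}$ itself maps to $C_{\Four}$. First I would normalize the signature, switching at $v_2$ and at $v_3$ if necessary so that $\sigma(vv_2)=\sigma(vv_3)=+$. Since the sign of a $4$-cycle is switching-invariant, the hypothesis then reads: for every common neighbour $w$ of $v_2,v_3$ (other than $v$) the $2$-path $v_2wv_3$ is positive, i.e.\ $\sigma(v_2w)=\sigma(v_3w)$. This is precisely the condition under which identifying $v_2$ and $v_3$ into a single vertex $z$ is a sign-preserving homomorphism creating no $C_{\!\scriptscriptstyle -2}$: parallel edges to a common neighbour carry equal signs and merge, while $vv_2,vv_3$ merge into one positive edge $vz$. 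Call the result $\hat{H}$; it is bipartite, simple, of negative girth at least $4$, and satisfies $\hat{G}\to\hat{H}$, whence $\hat{H}\not\to C_{\Four}$. The crucial feature is that $v$ now has degree $2$ (neighbours $v_1,z$), so $uv_1vz$ is a $3$-thread in $\hat{H}$.

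Next I would track the potential. Passing from $\hat{G}$ to $\hat{H}$ removes one vertex and $1+c$ edges, where $c$ counts the common neighbours of $v_2,v_3$ besides $v$, giving $p(\hat{H})=p(\hat{G})-1+3c$. If $c\ge 1$, or if $p(\hat{G})\ge 2$, then $p(\hat{H})\ge 1$ with $|V(\hat{H})|<|V(\hat{G})|$, so minimality of $\hat{G}$ forces $\hat{H}=\hat{W}$; but $\hat{W}$ is a subdivision of $K_4$ with no two adjacent degree-$2$ vertices, hence no $3$-thread, a contradiction.

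The hard case, and the main obstacle, is $c=0$ with $p(\hat{G})=1$, where $p(\hat{H})=0$ and minimality does not apply to $\hat{H}$ directly. Here I would peel off the interior of the thread: set $\hat{H}'=\hat{H}-\{v_1,v\}$, so that $p(\hat{H}')=p(\hat{H})+1=1$. By the thread-extension argument of Lemma~\ref{lem:3thread}, any homomorphism $\hat{H}'\to C_{\Four}$ extends across $uv_1vz$ to all of $\hat{H}$; since $\hat{H}\not\to C_{\Four}$ this yields $\hat{H}'\not\to C_{\Four}$, and now minimality forces $\hat{H}'=\hat{W}$. Thus $u,z\in V(\hat{W})$, and $\hat{G}$ is recovered from $\hat{W}$ by splitting $z$ back into $v_2,v_3$ (a genuine partition of $N(z)$ because $c=0$), inserting $v$ with $v\sim v_2,v_3$, and hanging $v_1$ between $v$ and $u$.

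To finish I would identify $\hat{G}-v_1$ with one of $\Omega_1,\Omega_2$. Since $\hat{W}$ is a subdivision of $K_4$ whose edges fall into exactly the two orbits used to define $\Omega_1$ and $\Omega_2$, replacing $z$ by the positive $2$-path $v_2vv_3$ realizes, up to switching isomorphism, a double subdivision of either a spoke or a triangle-edge of $\hat{W}$, according to the degree and position of $z$; that is, $\hat{G}-v_1\cong\Omega_1$ or $\Omega_2$. Consequently $\hat{G}=P_2(\Omega_i)$, with $v_1$ the new vertex joined to $v$ and $u$, so Lemma~\ref{lem:Omega1} or Lemma~\ref{lem:Omega2} applies: $\hat{G}$ either maps to $C_{\Four}$ or contains $\hat{W}$. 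The latter is impossible, since a $C_{\Four}$-critical graph cannot properly contain the non-$C_{\Four}$-colourable $\hat{W}$ and $\hat{G}\ne\hat{W}$ by hypothesis; hence $\hat{G}\to C_{\Four}$, the desired contradiction. I expect the genuinely delicate points to be (i) checking that the signatures, not merely the underlying graphs, match in $\hat{G}-v_1\cong\Omega_i$, and (ii) enumerating the degree/position sub-cases of $z$ so that every split lands in $\{\Omega_1,\Omega_2\}$.
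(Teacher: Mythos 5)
Your skeleton---normalize the signature so that $vv_2,vv_3$ are positive, identify $v_2,v_3$ into one vertex, use the hypothesis to rule out a $C_{\!\scriptscriptstyle -2}$, and finish by recognizing $\Omega_1$ or $\Omega_2$ and invoking Lemmas~\ref{lem:Omega1} and~\ref{lem:Omega2}---is the same as the paper's, and your endgame is correct. But the middle step has a genuine gap: you apply the minimality of $\hat{G}$ directly to the quotient $\hat{H}$ (and later to $\hat{H}'$). Minimality of a counterexample to Theorem~\ref{thm:densitystronger} constrains only \emph{$C_{\Four}$-critical} signed graphs on fewer vertices: every such graph other than $\hat{W}$ has potential at most $0$. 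It does not say that every smaller signed graph with potential at least $1$ that fails to map to $C_{\Four}$ must equal $\hat{W}$. A non-critical graph can have large potential and still not map to $C_{\Four}$, simply by containing a denser critical subgraph, because potential is not monotone under taking subgraphs (deleting a vertex lowers it by $4$, while deleting an edge raises it by $3$); for instance, the disjoint union of $\Gamma$ (potential $0$) with an isolated vertex has potential $4$, is smaller than $\hat{G}$, is not $\hat{W}$, and does not map to $C_{\Four}$. Your $\hat{H}$ is not a subgraph of $\hat{G}$, so Lemma~\ref{lem:SubgraphDensity} does not apply to it, and there is no reason for it to be critical: a proper subgraph of $\hat{H}$ is a quotient of a proper subgraph of $\hat{G}$, and a quotient of a $C_{\Four}$-colorable graph need not be $C_{\Four}$-colorable. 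So from ``$p(\hat{H})\geq 1$ and $\hat{H}\not\to C_{\Four}$'' you may only conclude that $\hat{H}$ \emph{contains} a critical subgraph which is $\hat{W}$ or has potential at most $0$, not that $\hat{H}=\hat{W}$; the same objection applies verbatim to $\hat{H}'$ in your hard case.

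Repairing this forces essentially the paper's argument, which your proposal skips. Take a $C_{\Four}$-critical subgraph $\hat{G}_2$ of the quotient; show that $v,v_1\notin V(\hat{G}_2)$ (they lie on a $3$-thread, impossible in a critical graph by Lemmas~\ref{lem:2connected} and~\ref{lem:3thread}), while the identified vertex must lie in $\hat{G}_2$ (otherwise $\hat{G}_2$ would be a proper subgraph of $\hat{G}$ not mapping to $C_{\Four}$, contradicting criticality of $\hat{G}$). Minimality now legitimately gives $p(\hat{G}_2)\leq 1$, with equality only if $\hat{G}_2=\hat{W}$. Then pull $\hat{G}_2$ back into $\hat{G}$ by splitting the identified vertex and re-inserting $v$ with its two positive edges: this produces a subgraph $\hat{G}_3$ of $\hat{G}$ missing $v_1$, with $p(\hat{G}_3)=p(\hat{G}_2)+2\leq 3$. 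Lemma~\ref{lem:SubgraphDensity}---the one tool that does control potentials of subgraphs of $\hat{G}$, and which your proposal never invokes---then forces $p(\hat{G}_3)=3$, $\hat{G}=P_2(\hat{G}_3)$ and $\hat{G}_2=\hat{W}$. From there your final paragraph (a split of a degree-$2$ or degree-$3$ vertex of $\hat{W}$ is a double edge-subdivision, so $\hat{G}_3$ is $\Omega_1$ or $\Omega_2$, and Lemmas~\ref{lem:Omega1} and~\ref{lem:Omega2} finish) agrees with the paper; note that once the argument is routed through $\hat{G}_2$ and $\hat{G}_3$, your case analysis on $c$ and $p(\hat{G})$ becomes unnecessary.
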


\begin{proof}
Suppose to the contrary that the path $v_2vv_3$ is not contained in a negative $4$-cycle. If needed, by switching at $v_2$ or $v_3$, we may assume that both $vv_2$ and $vv_3$ are of positive sign. Then by identifying $v_2$ and $v_3$ to a new vertex $v_0$, we get a homomorphic image $\hat{G}_1$ of $\hat{G}$. Observe that, since $v_2$ and $v_3$ are in the same part of $G$, $G_1$ is also bipartite. Furthermore, by our assumption, $\hat{G}_1$ does not contain a $C_{\!\scriptscriptstyle -2}$ and, therefore, $g_{ij}(\hat{G}_1)\geq g_{ij}(C_{\Four})$ for every $ij \in \mathbb{Z}_2^2$.

As $\hat{G}$ does not map to $C_{\Four}$, its homomorphic image, $\hat{G}_1$, does not map to it either. Thus there must be a $C_{\Four}$-critical subgraph $\hat{G}_2$ of $\hat{G}_1$. By Lemmas \ref{lem:3thread} and \ref{lem:2connected}, neither of the vertices $v$ and $v_1$ is a vertex of $\hat{G}_2$. On the other hand, $v_0 \in V(\hat{G}_2)$, as otherwise $\hat{G}_2$ is a proper subgraph of $\hat{G}$ which does not map to $C_{\Four}$, contradicting the fact that $\hat{G}$ is $C_{\Four}$-critical.
Since $\hat{G}$ is a minimum counterexample to the Theorem and $|V(\hat{G}_2)|<|V(\hat{G})|$, we have either $p(\hat{G}_2)\leq 0$ or $\hat{G}_2=\hat{W}$ in which case $p(\hat{G}_2)=1$.

Let $\hat{G}_3$ be the signed graph obtained from $\hat{G}_2$ by splitting $v_0$ back to $v_2$ and $v_3$, adding the vertex $v$ and adding the positive edges $vv_2$ and $vv_3$ back. Note that $\hat{G}_3$ is a subgraph of $\hat{G}$. We observe that
\begin{equation}\label{equ:G_2G_3}
p(\hat{G}_3)=p(\hat{G}_2)+4\times 2-3\times 2=p(\hat{G}_2)+2\leq 3.
\end{equation}
Furthermore, the equality is only possible if $\hat{G}_2=\hat{W}$. As $v_1\not\in V(\hat{G}_3)$, we know that $\hat{G}_3\neq \hat{G}$. By Lemma~\ref{lem:SubgraphDensity}, we must have $p(\hat{G}_3)=3$ and $\hat{G}=P_2(\hat{G}_3)$. And since equality in (\ref{equ:G_2G_3}) must hold, we also have $\hat{G}_2=\hat{W}$. 

As $\hat{G}_2=\hat{W}$, vertices of $\hat{G}_2$ are of degree 2 or 3, and, thus, the splitting operation on $v_0$ (that we considered in order to build $\hat{G}_3$) is the same as subdividing one of its edges twice. Recall that there are two types of edges in $\hat{W}$ up to (switching) isomorphism. Thus the subdivided signed graph $\hat{G}_3$ is one of the two signed graphs: either $\Omega_1$ of Figure~\ref{fig:Omega1} or $\Omega_2$ of Figure~\ref{fig:Omega2}.
Thus either $\hat{G}=P_2(\Omega_1)$ or $\hat{G}=P_2(\Omega_2)$. In the former case, by Lemma~\ref{lem:Omega1}, $\hat{G}$ maps to $C_{\Four}$. In the latter case, by Lemma~\ref{lem:Omega2}, either $\hat{G}$ maps to $C_{\Four}$ or it contains $\hat{W}$ as a proper subgraph but this contradicts the fact that $\hat{G}$ is $C_{\Four}$-critical.
\end{proof}

By combining Lemma~\ref{lem:2thread-2} with Claims~\ref{clm:s} and \ref{clm:theta}, we have our main forbidden configuration as follows:

\begin{corollary}\label{lem:3_2vertex}
A vertex of degree $3$ in the minimum counterexample $\hat{G}$ does not have two neighbors of degree $2$.
\end{corollary}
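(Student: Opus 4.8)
The plan is to derive Corollary~\ref{lem:3_2vertex} directly from Lemma~\ref{lem:2thread-2}, using Claims~\ref{clm:s} and~\ref{clm:theta} to rule out the configurations that Lemma~\ref{lem:2thread-2} would otherwise force. Suppose for contradiction that $\hat{G}$ contains a vertex $v$ of degree $3$ with two neighbors $v_1$ and $v_1'$ of degree $2$, and let $v_2,v_3$ denote the remaining neighbor(s). Since $v_1$ has degree $2$, it lies on a $2$-thread $v v_1 u$ (where $u$ is the other neighbor of $v_1$), and similarly $v_1'$ lies on a $2$-thread $v v_1' u'$. Applying Lemma~\ref{lem:2thread-2} to the thread $v v_1 u$, the degree-$3$ vertex $v$ and its two \emph{other} neighbors force a negative $4$-cycle through the appropriate pair of neighbors of $v$; I would apply the lemma to both threads and read off that $v$ must lie on two negative $4$-cycles.

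The key step is then bookkeeping on how these two forced $4$-cycles sit around $v$. Since $v$ has degree exactly $3$ and two of its incident edges go to the degree-$2$ vertices $v_1,v_1'$, the two negative $4$-cycles guaranteed by Lemma~\ref{lem:2thread-2} must each pass through $v$ via two of its three incident edges. I would enumerate the small number of ways two such $4$-cycles can overlap at $v$: they either share exactly one edge at $v$ or share two edges at $v$ (i.e.\ coincide in their two edges at $v$). In the first case the union of the two $4$-cycles is (a supergraph of) $\Theta_1$ of Figure~\ref{fig:2C4s} — two $4$-cycles sharing a single edge — which is excluded by Claim~\ref{clm:s}. In the remaining case the two $4$-cycles share two edges incident to $v$, and their union yields (a supergraph of) $\Theta_2$ of Figure~\ref{fig:2C4}, the theta-like configuration of two $4$-cycles sharing edges, excluded by Claim~\ref{clm:theta}. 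In either case we contradict the assumption that $\hat{G}$ is a minimum counterexample, completing the proof.

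The main obstacle I anticipate is the case analysis at the center: one must verify that the two negative $4$-cycles produced by the two applications of Lemma~\ref{lem:2thread-2} genuinely assemble into a copy of $\Theta_1$ or $\Theta_2$ as a \emph{subgraph} of $G$, rather than degenerating. In particular, one should check that the degree-$2$ vertices $v_1,v_1'$ are not themselves vertices of the forced $4$-cycles (they cannot be, since each such $4$-cycle uses the edges of $v$ going to $v_2,v_3$, not the edges to the degree-$2$ neighbors), and that the two $4$-cycles do not force a repeated vertex creating a shorter negative cycle, which would already be excluded by the negative-girth condition. Once the incidence pattern at $v$ is pinned down, the reduction to $\Theta_1$/$\Theta_2$ is immediate and the two claims close the argument. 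I expect this to be short enough that the corollary follows essentially as an observation, which is presumably why the authors phrase it as a corollary of the preceding lemma and claims.
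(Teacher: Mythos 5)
Your overall plan is the paper's: apply Lemma~\ref{lem:2thread-2} once for each degree-$2$ neighbor of $v$ and show that the two forced negative $4$-cycles assemble into a configuration forbidden by Claims~\ref{clm:s} and~\ref{clm:theta}. But the execution at the center is wrong, and not just cosmetically. Since $\deg(v)=3$ and two of its neighbors $v_1,v_1'$ have degree $2$, the vertex $v$ has exactly \emph{one} remaining neighbor, say $w$; your ``$v_2,v_3$'' do not both exist. Applying Lemma~\ref{lem:2thread-2} to the thread $uv_1v$, the ``other two neighbors'' of $v$ in the lemma's sense are $v_1'$ and $w$, so the forced negative $4$-cycle contains the path $v_1'vw$, i.e.\ it passes \emph{through} the other degree-$2$ vertex $v_1'$; symmetrically, the thread $u'v_1'v$ forces a negative $4$-cycle through the path $v_1vw$. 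Your parenthetical assertion that the degree-$2$ vertices cannot lie on the forced $4$-cycles is exactly backwards, and their lying on these cycles is the crux of the argument: because $\deg(v_1')=2$, the fourth vertex of the first cycle is forced to be $v_1'$'s other neighbor $u'$, and likewise the fourth vertex of the second cycle is forced to be $u$ (bipartiteness rules out the degeneracies $u=w$, $u'=w$, $u=v_1'$, $u'=v_1$).

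With the picture corrected, your case dichotomy must also change. The two cycles use the edge pairs $\{vv_1',vw\}$ and $\{vv_1,vw\}$ at $v$, so they \emph{always} share exactly one edge at $v$, namely $vw$; your second case (``coincide in their two edges at $v$'') is vacuous, and your first-case conclusion is false in general. The genuine dichotomy is whether $u=u'$. If $u=u'$, the union of the two cycles is $K_{2,3}$ with parts $\{v,u\}$ and $\{v_1,v_1',w\}$, which is precisely $\Theta_1$ of Figure~\ref{fig:2C4s} (4-cycles pairwise sharing a $2$-path), excluded by Claim~\ref{clm:s}. If $u\neq u'$, the union is the $6$-cycle $vv_1uwu'v_1'$ plus the chord $vw$, which is precisely $\Theta_2$ of Figure~\ref{fig:2C4} (two $4$-cycles sharing a single edge), excluded by Claim~\ref{clm:theta}. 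Note you also have the two figures swapped: $\Theta_1$ is the $K_{2,3}$-type configuration, not ``two $4$-cycles sharing a single edge.'' Once these corrections are made, the proof closes exactly as the paper intends.
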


We are now ready to prove Theorem~\ref{thm:densitystronger}. 

\begin{proof}
(Of Theorem~\ref{thm:densitystronger})
We will employ the discharging technique. We assign an initial charge of $c(v)=d(v)$ to each vertex of $G$. Observe that the total charge is $2|E(G)|$. We apply the following discharging rule:\\

{\em ``Every vertex of degree 2 receives a charge of $\frac{1}{3}$ from each of its neighbors.'' \\}

For each vertex $v$, let $c'(v)$ be the charge of $v$ after the discharging procedure. Since there is no 3-thread in $G$, each degree 2 vertex $v$ receives a total of $\frac{2}{3}$ from its two neighbors and thus $c'(v)=2+\frac{2}{3}=\frac{8}{3}$. Each degree 3 vertex $u$ has at most one neighbor of degree $2$, so $c'(u)\geq 3-\frac{1}{3}=\frac{8}{3}$. Each vertex $w$ of degree at least $4$ has charge $c'(w)\geq d(w)-\frac{d(w)}{3}=\frac{2d(w)}{3}\geq\frac{8}{3}$. Thus the total charge is at least $\frac{8|V(G)|}{3}$. 
That contradicts the assumption that $p(\hat{G})=4|V(G)|-3|E(G)|\geq 1$. 
\end{proof}

Applying this result in terms of maximum average degree of the (underlying) graph, denoted $mad(G)$, we have the following. 

\begin{corollary}\label{cor:mad}
	Given a signed bipartite (simple) graph $\hat{G}$, if $mad(G)<\frac{8}{3}$ and $\hat{G}$ does not contain $\hat{W}$ as a subgraph, then $\hat{G}\to C_{\Four}$.
\end{corollary}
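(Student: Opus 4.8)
The plan is to derive Corollary~\ref{cor:mad} as a direct consequence of Theorem~\ref{thm:densitystronger} via the standard critical-subgraph argument. Suppose for contradiction that $\hat{G}$ is a signed bipartite simple graph with $\mad(G)<\frac{8}{3}$, containing no copy of $\hat{W}$ as a subgraph, yet $\hat{G}\not\to C_{\Four}$. Since $\hat{G}$ does not map to $C_{\Four}$, it must contain a minimal subgraph $\hat{H}$ with respect to the failure of the homomorphism. More precisely, among all subgraphs of $\hat{G}$ that do not admit a homomorphism to $C_{\Four}$, choose one, $\hat{H}$, that is edge-minimal. Then $\hat{H}$ itself does not map to $C_{\Four}$ while every proper subgraph does, so $\hat{H}$ is $C_{\Four}$-critical once we verify the no-homomorphism-lemma conditions; these hold automatically because $\hat{H}$ is a subgraph of a signed bipartite graph (hence bipartite, giving $g_{01}=g_{11}=\infty$ as required) and cannot contain a shorter negative structure than $C_{\Four}$ without already failing minimality.

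Next I would split into two cases according to whether $\hat{H}\cong\hat{W}$. Since $\hat{H}$ is a subgraph of $\hat{G}$ and we assumed $\hat{G}$ contains no $\hat{W}$, the case $\hat{H}\cong\hat{W}$ is excluded by hypothesis. Hence $\hat{H}$ is a $C_{\Four}$-critical signed graph not isomorphic to $\hat{W}$, and Theorem~\ref{thm:densitystronger} applies to give
\begin{equation}\label{eq:madbound}
|E(H)|\geq \frac{4|V(H)|}{3}.
\end{equation}
This means the average degree of $H$ is at least $\frac{2|E(H)|}{|V(H)|}\geq \frac{8}{3}$.

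Finally I would reach the contradiction by relating this to $\mad(G)$. By definition, $\mad(G)$ is the maximum of $\frac{2|E(H')|}{|V(H')|}$ over all subgraphs $H'$ of $G$; since $H$ is a subgraph of $G$, the bound~\eqref{eq:madbound} yields
\begin{equation}
\mad(G)\geq \frac{2|E(H)|}{|V(H)|}\geq \frac{8}{3},
\end{equation}
contradicting the assumption $\mad(G)<\frac{8}{3}$. Therefore no such $\hat{G}$ exists, and every signed bipartite simple graph with $\mad(G)<\frac{8}{3}$ and no $\hat{W}$-subgraph maps to $C_{\Four}$.

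I expect the only genuinely delicate point to be the verification that the edge-minimal non-mappable subgraph $\hat{H}$ is legitimately $C_{\Four}$-critical in the sense defined in the excerpt, i.e.\ that it satisfies the no-homomorphism-lemma conditions on all four $g_{ij}$ values and not merely the ``no homomorphism, all proper subgraphs map'' part. The rest is a routine application of the definition of $\mad$. One should also note that taking the subgraph $\hat{H}$ with the induced signature is compatible with the switching-homomorphism framework, so that ``$\hat{H}\to C_{\Four}$ fails'' is switching-invariant and the minimality selection is well defined.
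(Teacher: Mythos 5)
Your proposal is correct and is exactly the argument the paper intends: Corollary~\ref{cor:mad} is stated as an immediate consequence of Theorem~\ref{thm:densitystronger}, obtained by passing to a minimal subgraph $\hat{H}$ of $\hat{G}$ that does not map to $C_{\Four}$, noting that $\hat{H}$ is $C_{\Four}$-critical and not $\hat{W}$ by hypothesis, so its average degree is at least $\frac{8}{3}$, contradicting $\mad(G)<\frac{8}{3}$. Two small touch-ups: take $\hat{H}$ minimal with respect to subgraph containment (not merely edge-minimal, else isolated vertices could spoil criticality as defined), and the condition $g_{10}(\hat{H})\geq g_{10}(C_{\Four})=4$ follows from the simplicity of $G$ (a negative closed walk of length $2$ requires parallel edges of opposite signs), not from the minimality of $\hat{H}$.
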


\section{Constructions of (sparse) $C_{\Four}$-critical signed graphs}\label{sec:construction}

We have already seen that $\chi(G) \leq k$ if and only if $T_{k-2}(G,+)\to C_{\CK}$. Next we extend this connection based on the notion of $0$-free coloring of signed graphs introduced by Zaslavsky in \cite{Za82}.

The notion of $0$-free coloring of signed graphs is one of the most natural extensions of the notion of proper coloring of graphs. Given the set $X_{2k}=\{\pm1, \pm2, \ldots, \pm k\}$, a signed multigraph $(G,\sigma)$ is said to be \emph{$X_{2k}$-colorable} if there exists an assignment $c:V(G)\to X_{2k}$ such that for each edge $e$ with endpoints $x$ and $y$ (not necessarily distinct), we have $$c(x)\neq \sigma(e)c(y).$$ Furthermore, $(G,\sigma)$ is said to be \emph{$X_{2k}$-critical} if it does not admit an $X_{2k}$-coloring but each of its proper subgraphs does.

Using this notion, Lemma~\ref{lem:T_k-2} can be extended to the following theorem. A proof of this theorem is also obtained by revising the proof of Lemma~\ref{lem:T_k-2} given in Section~\ref{sec:CL} and we leave the details to the reader.

\begin{theorem}\label{thm:T_k-2}
A signed multigraph $\hat{G}$ admits an $X_{2k}$-coloring if and only if $T_{2k-2}(\hat{G}) \to C_{\!\scriptscriptstyle -2k}$. Moreover, $\hat{G}$ is $X_{2k}$-critical if and only if $T_{2k-2}(\hat{G})$ is $C_{\!\scriptscriptstyle -2k}$-critical.
\end{theorem}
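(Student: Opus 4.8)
The plan is to mirror the proof of Lemma~\ref{lem:T_k-2} for the even case, replacing the color set $\{+,-\}\times X_1$ with $X_{2k}=\{\pm1,\dots,\pm k\}$ and carefully tracking the signs coming from the signature of $\hat G$. First I would fix a labelling $v_1,v_2,\dots,v_{2k}$ of $C_{\!\scriptscriptstyle -2k}$ in cyclic order, with parts $V_1=\{v_1,v_3,\dots,v_{2k-1}\}$ and $V_2=\{v_2,\dots,v_{2k}\}$, and record the bijection $X_{2k}\to\{+,-\}\times V_1$ sending the color $\varepsilon j$ (with $\varepsilon\in\{+,-\}$ and $1\le j\le k$) to the pair $(\varepsilon,v_{2j-1})$. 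Since $2k-2$ is even, every cycle of $\hat G$ is blown up to an even cycle, so $T_{2k-2}(\hat G)$ is bipartite with all original vertices in one part; arguing component by component and using vertex-transitivity of $C_{\!\scriptscriptstyle -2k}$, a homomorphism $\varphi=(\varphi_1,\varphi_2)$ may be assumed to send every original vertex to $V_1$, so its restriction to $V(\hat G)$ lands in $\{+,-\}\times V_1$, i.e.\ in $X_{2k}$ through the bijection.

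The heart of the argument is a single local equivalence for an edge $e=xy$ of $\hat G$ with sign $\sigma(e)$: the signed path of length $2k-2$ and sign $-\sigma(e)$ replacing $e$ extends a prescribed choice of endpoint images and endpoint switches to a homomorphism if and only if the corresponding colors satisfy $c(x)\neq\sigma(e)c(y)$. I would split into the two cases of Lemma~\ref{lem:T_k-2}. If $\varphi_2(x)\neq\varphi_2(y)$, the two arcs of $C_{\!\scriptscriptstyle -2k}$ between these distinct vertices of $V_1$ are even, of length at most $2k-2$, and of opposite signs; folding an arc (traversing an edge forth and back, contributing a squared sign, hence $+$) extends it to length exactly $2k-2$ while preserving its sign, so a walk of either sign is available, the path always maps, and $c(x)\neq\sigma(e)c(y)$ holds automatically since the images differ. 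If $\varphi_2(x)=\varphi_2(y)$, the path becomes a closed walk of length $2k-2<2k$ at a single vertex; being too short to wind once around the negative cycle, every such closed walk is positive, so it maps if and only if the sign of the path after switching its two ends, namely $\varphi_1(x)\varphi_1(y)\cdot(-\sigma(e))$, equals $+$, i.e.\ $\varphi_1(x)=-\sigma(e)\varphi_1(y)$. Translating through the bijection (here $|c(x)|=|c(y)|$), this is exactly $c(x)\neq\sigma(e)c(y)$. Assembling these local equivalences over the pairwise-disjoint internal paths yields the first statement in both directions.

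For the moreover part I would argue as in the moreover part of Lemma~\ref{lem:T_k-2}. If $\hat G$ is $X_{2k}$-critical, then $T_{2k-2}(\hat G)\not\to C_{\!\scriptscriptstyle -2k}$ by the first part, and for any edge $f$ of $T_{2k-2}(\hat G)$, lying on the path of some $e\in E(\hat G)$, a coloring of $\hat G-e$ lifts to a homomorphism of $T_{2k-2}(\hat G-e)$; the leftover vertices of the $e$-path form two pendant paths, which extend greedily because every vertex of $C_{\!\scriptscriptstyle -2k}$ has two neighbours, giving $T_{2k-2}(\hat G)-f\to C_{\!\scriptscriptstyle -2k}$. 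The converse uses that $T_{2k-2}(\hat G-e)$ is a proper subgraph of $T_{2k-2}(\hat G)$ together with the first part. One also records the no-homomorphism conditions: $T_{2k-2}(\hat G)$ is bipartite, and (excluding a positive loop, which makes $\hat G$ non-colorable) any cycle of $\hat G$ of length $\ge 2$ is blown up to a cycle of length at least $2(2k-2)\ge 2k$, so the negative girth of $T_{2k-2}(\hat G)$ is at least $2k=g_{10}(C_{\!\scriptscriptstyle -2k})$.

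The step I expect to be most delicate is the sign bookkeeping in the single-image case: verifying that a length-$(2k-2)$ closed walk in $C_{\!\scriptscriptstyle -2k}$ is \emph{forced} to be positive, and that switching at the two endpoints of the path contributes precisely the factor $\varphi_1(x)\varphi_1(y)$, so the homomorphism constraint coincides with $c(x)\neq\sigma(e)c(y)$ when $|c(x)|=|c(y)|$. A secondary subtlety is the multigraph bookkeeping (parallel edges and negative loops, which are harmless, versus positive loops, which render $\hat G$ non-colorable and must be set aside as a degenerate exception before invoking the negative-girth bound).
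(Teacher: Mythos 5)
Your proposal is correct and follows exactly the route the paper prescribes: the paper gives no standalone proof of this theorem, stating only that it is obtained by revising the proof of Lemma~\ref{lem:T_k-2}, and your argument is precisely that revision --- the identification of $X_{2k}$ with $\{+,-\}\times V_1$, the two-case local analysis (distinct images handled via the two even arcs of opposite sign plus folding, equal images via the fact that every closed walk of length $2k-2$ in $C_{\!\scriptscriptstyle -2k}$ is positive, giving the condition $\varphi_1(x)=-\sigma(e)\varphi_1(y)$, i.e.\ $c(x)\neq\sigma(e)c(y)$), and the pendant-path extension for the criticality part all match the even case of that lemma's proof. Your explicit bookkeeping for multigraph degeneracies (negative loops harmless, positive loops set aside since they violate the negative-girth condition required in the paper's definition of $C_{\!\scriptscriptstyle -2k}$-critical) is a point of care that the paper, having left the details to the reader, never addresses.
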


Given a graph $G$, let $\tilde{G}$ be the signed multigraph obtained from $G$ by replacing each edge of $G$ with a pair of edges: one of positive sign, another of negative sign. It is easily observed that: 

\begin{observation}\label{obs:k-X_2k-coloring}
A graph $G$ is $k$-colorable if and only if the signed multigraph $\tilde{G}$ is $X_{2k}$-colorable.
\end{observation}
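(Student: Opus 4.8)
\textbf{Proof proposal for Observation~\ref{obs:k-X_2k-coloring}.}

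The plan is to establish both directions of the equivalence by directly translating a $k$-coloring of $G$ into an $X_{2k}$-coloring of $\tilde{G}$ and vice versa, exploiting the structure of the color set $X_{2k}=\{\pm 1, \pm 2, \ldots, \pm k\}$ and the defining constraint $c(x)\neq \sigma(e)c(y)$. The key observation driving the whole argument is that in $\tilde{G}$ every original edge $uv$ of $G$ has been replaced by exactly one positive edge \emph{and} one negative edge between $u$ and $v$. A valid $X_{2k}$-coloring $c$ must therefore satisfy \emph{both} constraints simultaneously: $c(u)\neq (+1)\,c(v)$ (from the positive edge) and $c(u)\neq (-1)\,c(v)$ (from the negative edge). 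In other words, for each original edge of $G$ we get $c(u)\neq c(v)$ and $c(u)\neq -c(v)$, so the pair of edges collapses the sign distinction and forces $|c(u)|\neq |c(v)|$.

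First I would prove the forward direction. Suppose $G$ is $k$-colorable via a proper coloring $f:V(G)\to\{1,2,\ldots,k\}$. Define $c:V(G)\to X_{2k}$ by setting $c(v)=f(v)$ (viewing each color in $\{1,\ldots,k\}$ as the corresponding positive element of $X_{2k}$). For any original edge $uv$ of $G$ we have $f(u)\neq f(v)$ and both values are positive, hence $c(u)\neq c(v)=(+1)c(v)$ and also $c(u)=f(u)>0\neq -f(v)=(-1)c(v)$; thus both the positive and negative copies of the edge in $\tilde{G}$ are satisfied. Therefore $c$ is an $X_{2k}$-coloring of $\tilde{G}$.

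Conversely, suppose $\tilde{G}$ admits an $X_{2k}$-coloring $c:V(G)\to X_{2k}$. Define $f:V(G)\to\{1,\ldots,k\}$ by $f(v)=|c(v)|$. For any edge $uv\in E(G)$, the two parallel edges of $\tilde{G}$ give $c(u)\neq c(v)$ and $c(u)\neq -c(v)$, which together force $|c(u)|\neq|c(v)|$, i.e.\ $f(u)\neq f(v)$. Hence $f$ is a proper $k$-coloring of $G$. I do not anticipate a genuine obstacle here; the main point requiring care is simply recording that the simultaneous presence of a positive and a negative edge between each pair is precisely what erases the sign information and identifies the $2k$ colors of $X_{2k}$ in antipodal pairs with the $k$ colors of an ordinary coloring. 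One should also note that $\tilde{G}$ is genuinely a signed \emph{multigraph} (parallel edges are essential to the argument), which is consistent with the setting of Theorem~\ref{thm:T_k-2}.
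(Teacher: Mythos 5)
Your proof is correct, and it is precisely the argument the paper has in mind: the paper states this as an unproved ``easily observed'' fact, and the intended justification is exactly your point that the positive--negative parallel pair over each edge $uv$ enforces $c(u)\neq \pm c(v)$, i.e.\ $|c(u)|\neq |c(v)|$, so $X_{2k}$-colorings of $\tilde{G}$ correspond to proper $k$-colorings of $G$ via $f(v)=|c(v)|$. Nothing is missing; both directions are handled cleanly.
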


Next we develop another technique to build $C_{\!\scriptscriptstyle -2k}$-critical signed graphs.

\begin{theorem}\label{thm:T-of-Tilde}
Given a graph $G$, we have $\chi(G)\leq k$ if and only if $T_{2k-2}(\tilde{G})\to C_{\!\scriptscriptstyle -2k}$. Moreover, $G$ is $(k+1)$-critical if and only if $T_{2k-2}(\tilde{G})$ is $C_{\!\scriptscriptstyle -2k}$-critical.
\end{theorem}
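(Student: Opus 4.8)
The plan is to derive this theorem by combining Theorem~\ref{thm:T_k-2} (the $X_{2k}$-coloring characterization) with Observation~\ref{obs:k-X_2k-coloring} (the equivalence of $k$-colorability of $G$ and $X_{2k}$-colorability of $\tilde{G}$). Chaining these two facts immediately handles the first ``if and only if'' statement: by Observation~\ref{obs:k-X_2k-coloring}, $\chi(G)\leq k$ iff $\tilde{G}$ is $X_{2k}$-colorable, and by Theorem~\ref{thm:T_k-2} applied to the signed multigraph $\tilde{G}$, the latter holds iff $T_{2k-2}(\tilde{G})\to C_{\!\scriptscriptstyle -2k}$. So the coloring equivalence is essentially a formal composition, requiring only that one check $\tilde{G}$ is a legitimate signed multigraph to which Theorem~\ref{thm:T_k-2} applies.

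The moreover part about criticality requires more care, because criticality is not simply preserved under composition of equivalences: we would need that $G$ being $(k+1)$-critical corresponds exactly to $T_{2k-2}(\tilde{G})$ being $C_{\!\scriptscriptstyle -2k}$-critical. My first move would be to try to route this through the criticality half of Theorem~\ref{thm:T_k-2}, which says $\hat{G}$ is $X_{2k}$-critical iff $T_{2k-2}(\hat{G})$ is $C_{\!\scriptscriptstyle -2k}$-critical; taking $\hat{G}=\tilde{G}$, it suffices to prove the bridging statement that $G$ is $(k+1)$-critical if and only if $\tilde{G}$ is $X_{2k}$-critical. Thus the real content is an analogue of Observation~\ref{obs:k-X_2k-coloring} at the level of criticality rather than mere colorability.

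The hard part will be this bridging step, and the subtlety is that $\tilde{G}$ has \emph{two} parallel edges (one positive, one negative) for each edge of $G$, so a ``proper subgraph'' of $\tilde{G}$ may delete just one of the two parallel copies rather than a whole edge of $G$. I would therefore argue as follows. For the forward direction, assume $G$ is $(k+1)$-critical; then $\chi(G)=k+1$ gives $\tilde{G}\not\to X_{2k}$ by Observation~\ref{obs:k-X_2k-coloring}, and I must show every proper subgraph of $\tilde{G}$ is $X_{2k}$-colorable. Deleting any single edge of $\tilde{G}$ leaves at least one parallel copy of some original edge $e$; but an $X_{2k}$-coloring must satisfy both $c(x)\neq c(y)$ (from the positive copy) and $c(x)\neq -c(y)$ (from the negative copy), so removing only one of the two copies relaxes exactly one of these two constraints on the pair $\{x,y\}$. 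I expect to show that a proper $k$-coloring of $G-e$ (which exists by criticality of $G$) can be promoted to an $X_{2k}$-coloring of $\tilde{G}$ minus one copy of $e$, by a suitable sign choice on the endpoints of $e$ exploiting the remaining freedom. For the converse, assuming $\tilde{G}$ is $X_{2k}$-critical, I would observe that deleting both copies of an edge $e$ of $G$ gives $\widetilde{G-e}$, a proper subgraph of $\tilde{G}$, hence $X_{2k}$-colorable, hence $G-e$ is $k$-colorable by Observation~\ref{obs:k-X_2k-coloring}; combined with $\chi(G)=k+1$ this yields $(k+1)$-criticality of $G$. The one technical point to confirm is that $X_{2k}$-criticality with respect to the multigraph's genuine edge set is still compatible with the edge-pair structure, i.e.\ that the single-copy deletions, which give colorable subgraphs, do not cause trouble; I anticipate handling this cleanly by noting that the constraint $c(x)\neq\pm c(y)$ is exactly the $K_k$-type constraint, so the two parallel signed edges jointly encode a single ``color-distinctness'' requirement matching proper coloring of $G$.
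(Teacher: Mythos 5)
Your proposal is correct, but its treatment of the ``moreover'' part follows a genuinely different route from the paper's. The first equivalence is handled identically in both (formal composition of Observation~\ref{obs:k-X_2k-coloring} with Theorem~\ref{thm:T_k-2}). For criticality, you prove a bridging lemma --- $G$ is $(k+1)$-critical if and only if $\tilde{G}$ is $X_{2k}$-critical --- and then apply the criticality half of Theorem~\ref{thm:T_k-2} to $\hat{G}=\tilde{G}$. Your bridge does work: when the negative copy of $e=xy$ is removed, take a $k$-coloring $\psi$ of $G-xy$ with values in $\{1,\dots,k\}$; necessarily $\psi(x)=\psi(y)$ (else $G$ would be $k$-colorable), and replacing $\psi(x)$ by $-\psi(x)$ yields the required $X_{2k}$-coloring, while if the positive copy is removed, $\psi$ itself already works; all other proper subgraphs follow by monotonicity. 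The paper, by contrast, never considers $X_{2k}$-criticality of $\tilde{G}$ and uses only the colorability halves of the two ingredients: given an edge $e$ in the $uv$-thread of $T_{2k-2}(\tilde{G})$, it observes that $G'=G/uv$ is $k$-colorable (for the same underlying reason as in your argument: $u$ and $v$ are forced to share a color in every $k$-coloring of $G-uv$), hence $T_{2k-2}(\tilde{G'})\to C_{\!\scriptscriptstyle -2k}$ by the first part; it then reads this map as a map of $T_{2k-2}(\widetilde{G-uv})$ in which $u$ and $v$ are identified, with or without a prior switching at $u$ according to whether $e$ lies on the negative or the positive thread, and extends it over the broken thread. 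The trade-off: your decomposition is more modular, and the bridging lemma (criticality transfers under the doubling $G\mapsto\tilde{G}$) is of independent interest; but it consumes the criticality half of Theorem~\ref{thm:T_k-2}, which the paper only states and leaves as an exercise, whereas the paper's own argument needs nothing beyond the colorability statements, at the cost of an explicit hands-on extension of homomorphisms over threads. Both arguments ultimately pivot on the same observation, that the endpoints of a deleted edge of a $(k+1)$-critical graph are forced to receive equal colors.
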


\begin{proof}
The first part of the theorem follows from Theorem~\ref{thm:T_k-2} and Observation~\ref{obs:k-X_2k-coloring}. 

For the moreover part, we first assume that
$T_{2k-2}(\tilde{G})$ is $C_{\!\scriptscriptstyle -2k}$-critical and need to show that $G$ is $(k+1)$-critical. For this it is enough to show that every proper subgraph $H$ of $G$ is $k$-colorable. Since $T_{2k-2}(\tilde{H})$ is a proper subgraph of $T_{2k-2}(\tilde{G})$ and any proper subgraph of $T_{2k-2}(\tilde{G})$ maps to $C_{\!\scriptscriptstyle -2k}$, by the first part of the theorem, $H$ is $k$-colorable.

Next we assume $G$ is $(k+1)$-critical. Let $e$ be an edge of $T_{2k-2}(\tilde{G})$. Our goal is to show that $T_{2k-2}(\tilde{G})-e$ maps $C_{\!\scriptscriptstyle -2k}$. Let $uv$ be the edge in $G$ which corresponds to the thread (of $T_{2k-2}(\tilde{G})$) to which $e$ belongs. 

Let $G'=G/uv$ be the graph obtained from $G$ by contracting the edge $uv$ and let $H=G-uv$. We observe that $G'$ is $k$-colorable because first of all $H$ is $k$-colorable, and, secondly, in any such coloring $u$ and $v$ must receive the same color.

Next we consider two signed graphs obtained from $T_{2k-2}(\tilde{H})$: (1) The signed graph $T_{uv}^+$ is obtained by identifying $u$ and $v$. (2) The signed graph $T_{uv}^-$ is obtained by switching at $u$ and then identifying $u$ and $v$.   
One may easily observe that these two signed graphs are (switching) isomorphic, and in fact each of them can be regarded as $T_{2k-2}(\tilde{G'})$. Since $G'$ is $k$-colorable, and by the first part of the theorem, $T_{2k-2}(\tilde{G'})$ maps to $C_{\!\scriptscriptstyle -2k}$.

If $e$ is deleted from the negative $uv$-thread, then we consider a mapping of $T_{uv}^+$ to  $C_{\!\scriptscriptstyle -2k}$. This mapping can also be viewed as a mapping of $T_{2k-2}(\tilde{H})$ to $C_{\!\scriptscriptstyle -2k}$ where $u$ and $v$ are identified without any switching on them. As the positive $uv$-thread is of even length $2k-2$, this mapping can easily be extended to a mapping of $T_{2k-2}(\tilde{G})-e$ to $C_{\!\scriptscriptstyle -2k}$. When $e$ is on the positive $uv$-thread, we will consider a mapping of $T_{uv}^-$ to  $C_{\!\scriptscriptstyle -2k}$. We note that after switching at $u$, the negative path becomes positive. The mapping can then be extended as in the previous case.
\end{proof}

This theorem further emphasizes on the importance of the study of $C_{\!\scriptscriptstyle -2k}$-critical graphs, and, more generally, of homomorphisms of signed bipartite graphs. The operation $T'_{2k-1}$ applied on graphs (as defined in the introduction) connects $(2k+1)$-coloring problem of graphs to $C_{\!\scriptscriptstyle \, 2k+1}$-coloring problem of graphs. Thus only odd values of the chromatic number are captured by  $C_{\!\scriptscriptstyle \, 2k+1}$-coloring problem. The operation $T_{2k-2}$, when applied on signed multigraphs $\tilde{G}$,  connects the $k$-coloring problem of $G$ to $C_{\!\scriptscriptstyle -2k}$-coloring problem of signed graphs. Thus $C_{\!\scriptscriptstyle -2k}$-coloring problem captures $k$-coloring problem for all the values of $k$. We note that $T_{2}(\tilde{G})$ is the same as $S(G)$ defined in \cite{NRS15} and refer to this reference for more on the importance of the homomorphisms of signed bipartite graphs.

By Theorem~\ref{thm:T-of-Tilde} and noting that odd cycles are the only 3-critical graphs, we have $T_{2}(\tilde{C}_{\!\scriptscriptstyle \, 2k+1})$ as an example of $C_{\Four}$-critical signed graph for each value of $k$. See Figures~\ref{fig:T2C_3} and \ref{fig:T2C_5} for $T_{2}(\tilde{C}_{\!\scriptscriptstyle \, 3})$ and $T_{2}(\tilde{C}_{\!\scriptscriptstyle \, 5})$. The signed bipartite graph $\hat{G}_{2k+1}=T_{2}(\tilde{C}_{\!\scriptscriptstyle \, 2k+1})$ has $6k+3$ vertices and $8k+4$ edges. Thus $T_{2}(\tilde{C}_{\!\scriptscriptstyle \, 2k+1})$ is an example of a $C_{\Four}$-critical signed graph for which the bound of Theorem~\ref{thm:densitystronger} is tight. 

Let $\hat{G}'_{2k+1}$ be the signed (bipartite) graph obtained from $\hat{G}_{2k+1}$ by identifying two vertices of degree $2$ which are at distance 2 and their common neighbor is adjacent to both with positive edges. See Figure~\ref{fig:T2C'_5} for an illustration of $\hat{G}'_5$. Observe that $\hat{G}'_{3}$ contains $\hat{W}$ as a proper subgraph and, thus, is not $C_{\Four}$-critical. For $k\geq2$, $\hat{G}'_{2k+1}$ does not map to $C_{\Four}$ because it is a homomorphic image of $\hat{G}_{2k+1}$. Moreover, it has average degree of $\frac{8|V(G'_{2k+1})|+2}{3|V(G'_{2k+1})|}$, it does not contain $\hat{W}$ as a subgraph and any proper subgraph of it has average degree strictly less than $\frac{8}{3}$. Thus, by Corollary~\ref{cor:mad}, it is a $C_{\Four}$-critical signed graph for which the bound of Theorem~\ref{thm:densitystronger} is tight.
Further identification of vertices of degree $2$ would lead to other examples for which the bound of Theorem~\ref{thm:densitystronger} is either tight or nearly tight.

\begin{figure}[htbp]
	\centering
\begin{minipage}[t]{.3\textwidth}
			\centering
			\begin{tikzpicture}
		[scale=.2]
			\foreach \i in {1,2,3}
			{
				\draw[rotate=120*(\i-1)] (0, 5) node[circle, draw=black!80, inner sep=0mm, minimum size=2.4mm] (x_\i){};
			}
			
			\foreach \i in {1,2,3}
			{
				\draw[rotate=120*(\i-1)-60] (0, 5) node[circle, draw=black!80, inner sep=0mm, minimum size=2.4mm] (z_\i){};
			}
			
			\foreach \i in {1,2,3}
			{
				\draw[rotate=120*(\i-1)] (0, 3) node[circle, draw=black!80, inner sep=0mm, minimum size=2.4mm] (y_\i){};
			}
					
			\foreach \i in {1,2,3}
			{
				\draw[dashed, line width=0.5mm, red] (x_\i) -- (z_\i);
				\draw[line width=0.5mm, blue] (y_\i) -- (z_\i);
	
				}
				
				\foreach \i/\j in {1/2,2/3,3/1}
			{
				\draw[line width=0.5mm, blue] (y_\i) -- (z_\j);
				\draw[line width=0.5mm, blue] (x_\i) -- (z_\j);
				}		
			\end{tikzpicture}
			\caption{$T_{2}(\tilde{C}_{\!\scriptscriptstyle \, 3})$}
			\label{fig:T2C_3}
		\end{minipage}
	\begin{minipage}[t]{.3\textwidth}
		\centering
			\begin{tikzpicture}
			[scale=.2]
			\foreach \i in {1,2,3,4,5}
			{
				\draw[rotate=72*(\i-1)] (0, 5) node[circle, draw=black!80, inner sep=0mm, minimum size=2.4mm] (x_\i){};
			}
			
			\foreach \i in {1,2,3,4,5}
			{
				\draw[rotate=72*(\i-1)-36] (0, 5) node[circle, draw=black!80, inner sep=0mm, minimum size=2.4mm] (z_\i){};
			}
			
			\foreach \i in {1,2,3,4,5}
			{
				\draw[rotate=72*(\i-1)] (0, 3) node[circle, draw=black!80, inner sep=0mm, minimum size=2.4mm] (y_\i){};
			}
					
			\foreach \i in {1,2,3,4,5}
			{
				\draw[dashed, line width=0.5mm, red] (x_\i) -- (z_\i);
				\draw[line width=0.5mm, blue] (y_\i) -- (z_\i);
	
				}
				
				\foreach \i/\j in {1/2,2/3,3/4,4/5,5/1}
			{
				\draw[line width=0.5mm, blue] (y_\i) -- (z_\j);
				\draw[line width=0.5mm, blue] (x_\i) -- (z_\j);
				}		
			\end{tikzpicture}
			\caption{$T_{2}(\tilde{C}_{\!\scriptscriptstyle \, 5})$}
			\label{fig:T2C_5}
			\end{minipage}
\begin{minipage}[t]{.3\textwidth}
		\centering
			\begin{tikzpicture}
			[scale=.2]
			\foreach \i in {1,2,3,4,5}
			{
				\draw[rotate=72*(\i-1)] (0, 5) node[circle, draw=black!80, inner sep=0mm, minimum size=2.4mm] (x_\i){};
			}

			\foreach \i in {1,2,3,4,5}
			{
				\draw[rotate=72*(\i-1)-36] (0, 5) node[circle, draw=black!80, inner sep=0mm, minimum size=2.4mm] (z_\i){};
			}
			
			\foreach \i in {3,4,5}
			{
				\draw[rotate=72*(\i-1)] (0, 3) node[circle, draw=black!80, inner sep=0mm, minimum size=2.4mm] (y_\i){};
			}
			\foreach \i in {1,2}
			{		
                \draw[rotate=36] (0, 3) node[circle, draw=black!80, inner sep=0mm, minimum size=2.4mm] (y_\i){};
            }

			\foreach \i in {1,2,3,4,5}
			{
				\draw[dashed, line width=0.5mm, red] (x_\i) -- (z_\i);
				\draw[line width=0.5mm, blue] (y_\i) -- (z_\i);
	
				}
				
				\foreach \i/\j in {1/2,2/3,3/4,4/5,5/1}
			{
				\draw[line width=0.5mm, blue] (y_\i) -- (z_\j);
				\draw[line width=0.5mm, blue] (x_\i) -- (z_\j);
				}		
			\end{tikzpicture}
			\caption{$\hat{G}'_5$}
			\label{fig:T2C'_5}
			\end{minipage}
		\end{figure}

Another method of building $C_{\Four}$-critical signed graphs is as follows. Let $\hat{G}_1$ and $\hat{G}_2$ be two $C_{\Four}$-critical signed graphs each with a vertex of degree 2. Suppose $u$ is a vertex of degree 2 in $\hat{G}_1$ with $u_1$ and $u_2$ as its neighbors, and $v$ is a vertex of degree 2 in $\hat{G}_2$ with $v_1$ and $v_2$ as its neighbors. As $\hat{G}_1$ is a $C_{\Four}$-critical signed graph, $\hat{G}_1-u$ maps to $C_{\Four}$. But any such mapping must map $u_1$ and $u_2$ to the same vertex of $C_{\Four}$ and must have applied a switching on $\hat{G}_1-u$ so that with the same switching on $\hat{G}_1$, the path $u_1uu_2$ is negative. We consider $\hat{G}_1$ with this signature and do the same on $\hat{G}_2$. We then build a signed graph ${\cal F}(\hat{G}_1,\hat{G}_2)=\hat{G}$ from disjoint union of $\hat{G}_1$ and $\hat{G}_2$ by deleting $u$ and $v$, and adding a positive edge $u_1v_1$ and a negative edge $u_2v_2$. We leave it to the reader to verify that the result is a $C_{\Four}$-critical signed graph. In Figure~\ref{fig:CriticalConstruction},  we have depicted the signed graph obtained from this operation on two disjoint copies of $\hat{W}$. We note that this is an example of a $C_{\Four}$-critical signed graph on 12 vertices for which the bound of Theorem~\ref{thm:densitystronger} is tight. One may note that, furthermore, the same technique can be applied to build a new $C_{\!\scriptscriptstyle -2k}$-critical signed graph from two $C_{\!\scriptscriptstyle -2k}$-critical signed graphs each having a vertex of degree 2. Moreover, towards building a $C_{\!\scriptscriptstyle -2k}$-critical signed graph of lower edge-density, instead of connecting $u_iv_i$ directly, one may use paths of length $k-1$, one of positive sign, one of negative sign.

\begin{figure}[htbp]
\centering
\begin{minipage}[t]{.4\textwidth}
	\centering
		\begin{tikzpicture}
		[scale=.2]
		\foreach \i in {1,2,3,4,5,6}
		{
			\draw[rotate=60*(\i-1)-30] (0, 4) node[circle, draw=black!80, inner sep=0mm, minimum size=2.4mm] (x_\i){};
                 }
                 \foreach \i in {6,8,12,14}
		{
			\draw[rotate=30*(\i-1)-90] (0, 7) node[circle, draw=black!80, inner sep=0mm, minimum size=2.4mm] (y_{\i}){};
                 }
                  \foreach \i in {7,13}
		{
			\draw[rotate=30*(\i-1)-90] (0, 8) node[circle, draw=black!80, inner sep=0mm, minimum size=2.4mm] (y_{\i}){};
		}
		\foreach \i/\j in {4/5, 2/3, 1/6, 3/4, 5/6}
		{
			\draw[line width=0.5mm, blue] (x_\i) -- (x_\j);
		}
		
		\draw[line width=0.5mm, blue] (x_2) -- (y_{6}) -- (y_{7}) -- (y_{8}) -- (x_4);
		\draw[line width=0.5mm, blue] (x_1) -- (y_{14}) -- (y_{13}) -- (y_{12}) -- (x_5);
		\draw[dashed, line width=0.5mm, red] (x_3) -- (y_{7});
		\draw[dashed, line width=0.5mm, red] (x_6) -- (y_{13});
		\draw[dashed, line width=0.5mm, red] (x_1) -- (x_2);
		\end{tikzpicture}
		\caption{${\cal F}(\hat{W}, \hat{W})$}
		\label{fig:CriticalConstruction}
		\end{minipage}
	\begin{minipage}[t]{.4\textwidth}
		\centering
		\begin{tikzpicture}
		[scale=.2]
		\foreach \i in {1,2,3,4,5}
		{
			\draw (4*\i, 4) node[circle, draw=black!80, inner sep=0mm, minimum size=2.4mm] (x_{\i}){};
		}
	
         \foreach \i in{1,2,3,4,5}
		{
			\draw (4*\i, 0 ) node[circle, draw=black!80, inner sep=0mm, minimum size=2.4mm] (y_{\i}){};
		}

\foreach \i/\j in {2/3,3/4,4/5}
			{
				\draw[line width=0.5mm, blue] (x_{\i}) -- (x_{\j});
				}
				
\foreach \i/\j in {1/2,2/3,4/5}
			{
				\draw[line width=0.5mm, blue] (y_{\i}) -- (y_{\j});
				}

	    	\draw[ line width=0.5mm, blue] (x_{1}) -- (y_{1});
            \draw[ line width=0.5mm, blue] (x_{2}) -- (y_{2});
            \draw[ line width=0.5mm, blue] (x_{4}) -- (y_{4});

            \draw[dashed, line width=0.5mm, red] (x_{1}) -- (x_{2});
            \draw[dashed, line width=0.5mm, red] (y_{3}) -- (y_{4});
			\draw[dashed, line width=0.5mm, red] (x_{5}) -- (y_{5});
		    
            \draw[line width=0.5mm, blue] (x_{1}) edge[bend left] (x_{4});
            \draw[line width=0.5mm, blue] (y_{2}) edge[bend right] (y_{5});
		
		\end{tikzpicture}
		\caption{${\cal H}(\Gamma,\Gamma)$}
		\label{fig:HajosConstruction}
	\end{minipage}
\end{figure}

{\bf Analogue of Haj\'{o}s construction.}
The Haj\'{o}s construction of $k$-critical graphs can be adapted to build $C_{\Four}$-critical signed graphs from two given $C_{\Four}$-critical signed graphs. The general case will be addressed in a forthcoming work.  Let $\hat{G}_1$ be a $C_{\Four}$-critical signed graph and let $x_1y_1$ be a positive edge of $\hat{G}_1$. Then $\hat{G}_1-x_1y_1$ admits a homomorphism $\phi$ to $C_{\Four}$. Since $C_{\Four}$ is vertex transitive, and since $(-\phi_1, \phi_2)$ is the same as $(\phi_1, \phi_2)$, we may consider only the mappings for which $\phi(x_1)=(+,u_2)$ (where $u_2$ refers to the labeling of $C_{\Four}$ in Figure~\ref{fig:C_4}). Then we must have $\phi_1(y_1)=-$ as otherwise, $\phi$ is also a mapping of $\hat{G}_1$ to $C_{\Four}$. Furthermore, for any other edge $e$,  if we take a mapping $\phi'$ of $\hat{G}-e$ satisfying $\phi'(x_1)=(+,u_2)$, then we must have $\phi(y_1)=+$.  
Similarly, consider a $C_{\Four}$-critical signed graph $\hat{G}_2$ with a negative edge $x_2y_2$. Then by a similar argument, for any mapping $\psi$ of $\hat{G}_2-x_2y_2$ for which $\psi(x_2)=(+,u_2)$, we must have $\psi(y_2)=+$. 

We now build a new $C_{\Four}$-critical signed graph $\hat{H}={\cal H}(\hat{G}_1,\hat{G}_2)$ as follows: $\hat{H}$ is obtained from vertex disjoint copies of $\hat{G}_1$ and $\hat{G}_2$ by deleting $x_1y_1, x_2y_2$ and identifying $x_1$ with $x_2$ to get a vertex $x$ and $y_1$ with $y_2$ to get a vertex $y$. We observe that if there exists a homomorphism $\varphi$ of $\hat{H}$ to $C_{\Four}$, then, by symmetries, we may assume $\varphi(x)=(+,u_2)$. Then the restriction on $\hat{G}_1$ implies  $\varphi_1(y)=-$ and the restriction on $\hat{G}_2$ implies $\varphi_1(y)=+$, a contradiction, implying that $\hat{H}$ does not map to $C_{\Four}$. Removing an edge from one part of $\hat{H}$ then leads in mappings of the two different parts that can be merged together, which shows that $\hat{H}$ is $C_{\Four}$-critical. An example of this construction, using two disjoint copies of the unique $C_{\Four}$-critical signed graph $\Gamma$ on six vertices (see Figure~\ref{fig:Gamma}) is given in Figure~\ref{fig:HajosConstruction}.

The signed graph ${\cal H}(\hat{G}_1, \hat{G}_2)$ has $|V(\hat{G}_1)|+|V(\hat{G}_2)|-2$ vertices. Using the techniques mentioned above one can easily build $C_{\Four}$-critical signed graphs of orders $9,10,11,12$. Then applying Haj\'{o}s construction to a previously built $C_{\Four}$-critical signed graph and $\Gamma$ (on 6 vertices), one can build a $C_{\Four}$-critical signed graph on any number $n$ of vertices for $n\geq 9$. 

\medskip
Given positive integers $k$ and $n$ ($n\geq k+2$), let $f(n,k)$ be the minimum number of edges of a $k$-critical graph on $n$ vertices. We refer to \cite{KY14} for an almost precise value of $f(n,k)$ and for historical background on the study of this function.
We similarly may define $g(n, k)$ to be the minimum number of edges of a $C_{\CK}$-critical signed graph on $n$ vertices. As noted above, $g(n,4)$ is well-defined for $n\geq 9$. It can be similarly shown that $g(n,k)$ is well-defined for $n\geq N_k$ where $N_k$ is an integer depending on $k$ only. 

Lemma~\ref{lem:T_k-2} and Theorem~\ref{thm:T-of-Tilde} imply the following relations between $f(n,k)$ and $g(n,k)$.

\begin{itemize}
\item By Lemma~\ref{lem:T_k-2},  
\begin{equation}\label{equ:f-g}
g(n+(k-3)f(n,k), k)\leq (k-2)f(n,k).
\end{equation}
\item By Theorem~\ref{thm:T-of-Tilde},

\begin{equation}\label{equ:2f-g}
 g(n+2(l-1)f(n,l), 2l) \leq 2(l-1)f(n,l).
\end{equation}
\end{itemize}

Authors of \cite{DP17} and \cite{PS19} suggest that for $k=5$ and $k=7$  the inequality~(\ref{equ:f-g}) is almost tight. Our work here shows that for $C_{\Four}$-critical signed graphs the inequality of (\ref{equ:2f-g}) provides a tight bound. For $k=6$, the two inequalities provide similar bounds where the only difference is in the constant (in the favor of inequality of (\ref{equ:f-g})). For other values of $k=2l$ the inequality of (\ref{equ:f-g}) provides a better bound than (\ref{equ:2f-g}) and it is tempting to suggest that (\ref{equ:f-g}) gives a nearly tight bound for $g(n,k)$ for $k\geq 5$. A point of hesitation here is that, while the notion of $k$-critical graphs is widely studied and the value and behavior of $f(n,k)$ are almost determined, the notion of critical signed graphs, aside from its relation to $(2k+1)$-critical graphs (with no sign), is a new notion and hardly anything is known about it. In particular, what can then be said about the minimum number of edges of an $X_{2k}$-critical signed graph? Constructions other than $\tilde{G}$, combined with Theorem~\ref{thm:T_k-2}, may provide better bounds on $g(n, 2k)$.

\section{Application to signed bipartite planar  graphs}\label{sec:Planar}

Introducing a bipartite analogue of Jaeger-Zhang conjecture, it was conjectured in $\cite{NRS15}$ that every signed bipartite planar graph, whose shortest negative cycles are of length at least $4k-2$, maps to $C_{\!\scriptscriptstyle -2k}$. In support of the conjecture, the claim is proved, in \cite{CNS20}, for a weaker condition when the negative girth is at least $8k-2$. Here we use the folding Lemma of \cite{NRS13} to prove that every signed bipartite planar graph whose  negative girth is at least $8$ maps to $C_{\Four}$ and we show that this bound is tight, thus disproving the exact claim of the conjecture for the case $k=2$.

\begin{lemma}\cite{NRS13}
\label{lem:folding}
Given a signed bipartite planar graph $(G,\sigma)$ with an embedding on the plane, if the length of the shortest negative cycles of $(G, \sigma)$ is at least $2k$ and a face $F$ is not a negative cycle of length $2k$, then there is a homomorphic image of $(G, \sigma)$ which identifies two vertices at distance 2 of $F$ and such that its shortest negative cycles are also of length at least $2k$.  
\end{lemma}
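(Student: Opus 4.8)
The plan is to adapt the folding lemma of Klostermeyer and Zhang for the odd-girth of ordinary planar graphs to the present signed bipartite, negative-girth setting. We fix the given signature throughout and work with the embedding in which $F$ is the outer face; as usual we may assume $(G,\sigma)$ is $2$-connected, so that every face is bounded by a cycle (a cut-vertex reduces the statement to a smaller block).

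First I would reformulate the effect of a fold in terms of signed distances. For vertices $a,b$ lying in the same part, write $d^-(a,b)$ (resp. $d^+(a,b)$) for the length of a shortest negative (resp. positive) $a$-$b$ walk. Two basic facts are needed: (i) a shortest negative walk is a path, since any repeated vertex splits off a closed subwalk which, by the hypothesis that the negative girth is at least $2k$, cannot be negative and hence may be deleted to shorten the walk; and (ii) concatenating a shortest positive and a shortest negative $a$-$b$ walk yields a negative closed walk, so $d^+(a,b)+d^-(a,b)\ge 2k$ whenever both are finite. Now if $u,w$ are the two neighbours on $F$ of a boundary vertex $v$, then identifying $u$ and $w$ (and merging parallel edges) produces a homomorphic image in which the new closed walks through the merged vertex are exactly the images of the $u$-$w$ walks of $(G,\sigma)$; hence the image again has negative girth at least $2k$ \emph{if and only if} $d^-(u,w)\ge 2k$. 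Note that the sign of the digon created from the $2$-path $uvw$ is switching-invariant, so a negative $2$-path $uvw$ can never be repaired and automatically gives $d^-(u,w)=2$; this is why the condition is exactly $d^-(u,w)\ge 2k$. Thus the whole lemma reduces to exhibiting one boundary vertex $x_i$ of the cycle $x_0x_1\cdots x_{\ell-1}=\partial F$ with $d^-(x_{i-1},x_{i+1})\ge 2k$.

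I would then argue by contradiction, assuming $d^-(x_{i-1},x_{i+1})\le 2k-2$ for every $i$. For each $i$ pick a shortest negative $x_{i-1}$-$x_{i+1}$ path $W_i$; closing it up with the boundary arc $x_{i-1}x_ix_{i+1}$ gives a negative closed walk of length at most $2k$, which by the girth bound is a negative $2k$-cycle $C_i$ through that arc (the degenerate positions, where $x_{i-1}x_ix_{i+1}$ is itself negative, are recorded separately as giving a negative $2$-path). Each $C_i$ bounds a disk $R_i$ not containing the outer face $F$. Choosing the position $j$ with $R_j$ inclusion-minimal, I would run a planar uncrossing argument on the shortest negative paths: when the path of a neighbouring bad position enters $R_j$, cutting and re-splicing it against $W_j$ while tracking the resulting signs (using that the two spliced walks have sign-product equal to $\sigma(W_i)\sigma(W_j)=+$) produces either a negative closed walk of length below $2k$, contradicting the girth, or a negative $2k$-cycle cutting off a strictly smaller disk, contradicting minimality. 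Pushing this to the base case would force $R_j$ to degenerate onto $F$, so that $F$ is itself the negative $2k$-cycle $C_j$, contradicting the hypothesis that $F$ is not a negative $2k$-cycle.

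The main obstacle, and the essential difference from the unsigned case, is the sign bookkeeping in this last step: in the odd-girth setting only the \emph{parity} of the rerouted paths matters, whereas here every uncrossing must simultaneously respect \emph{length} and \emph{sign}, and the switching-invariant but position-dependent signs of the boundary $2$-paths must be handled so that the negative-$2$-path positions are fed correctly into the same minimal-region comparison. Verifying that no admissible switching can turn a created negative digon positive, and that the minimal cut-off region cannot persist as a separate inner face but must coincide with $F$, are the delicate points; the remainder is a transcription of the Klostermeyer--Zhang argument supported by the inequality $d^+(a,b)+d^-(a,b)\ge 2k$.
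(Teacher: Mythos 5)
The paper offers no proof of Lemma~\ref{lem:folding} at all --- it is imported from \cite{NRS13} --- so your proposal has to stand on its own, and it contains a genuine error at its foundational step. You assert that ``the sign of the digon created from the $2$-path $uvw$ is switching-invariant, so a negative $2$-path $uvw$ can never be repaired,'' and you conclude that a fold at $v$ preserves negative girth if and only if $d^-(u,w)\geq 2k$. That is false: switching at the \emph{endpoint} $u$ (or $w$) flips the sign of exactly one of the edges $uv$, $vw$, and therefore flips the sign of the $2$-path; the paper itself stresses in Section~\ref{sec:Hom} that path signs are not switching-invariant. Since a homomorphic image in this theory may switch before identifying (Observation~\ref{obs:HomDef}), the correct criterion is that the fold at $v$ succeeds if and only if $d^-(u,w)\geq 2k$ \emph{or} $d^+(u,w)\geq 2k$. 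Under your criterion the statement you are trying to prove is simply false: for $k=2$, take $(G,\sigma)$ to be a $6$-cycle with alternating signs. It is bipartite and planar, its negative girth is $6$, and its faces are negative $6$-cycles rather than negative $4$-cycles, yet every boundary $2$-path is negative, so by your test no fold exists. In reality a fold does exist: switch at one endpoint of a boundary $2$-path and then identify its ends; the image is a negative $4$-cycle with a pendant edge, of negative girth exactly $4$.

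This is not a repairable bookkeeping issue: the ``degenerate positions'' you set aside are precisely the positions where your criterion diverges from the truth, and you concede you do not know how to feed them into the minimal-region comparison. With the corrected criterion they vanish altogether: if no fold is possible, then for \emph{every} $i$ there is a path of length at most $2k-2$ from $x_{i-1}$ to $x_{i+1}$ whose sign is \emph{opposite} to that of the $2$-path $x_{i-1}x_ix_{i+1}$, and the union of the two is a negative closed walk of length at most $2k$, hence (by the girth hypothesis) a negative $2k$-cycle through $x_{i-1}x_ix_{i+1}$ --- uniformly at every position, which is the correct input for the Klostermeyer--Zhang-style innermost-region argument. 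Finally, even granting a corrected setup, your proposal only asserts the crucial uncrossing step (``produces either a negative closed walk of length below $2k$ \ldots\ or a negative $2k$-cycle cutting off a strictly smaller disk'') and explicitly defers its sign bookkeeping as ``the delicate point''; since that step is the entire substance of the lemma in the signed setting, what you have written is a plan for a proof rather than a proof.
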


Observe that this identification preserves both planarity and bipartiteness. Thus, repeatedly applying the lemma, we get a homomorphic image where all faces are negative cycles of length $2k$. Taking $k=4$, starting from a signed bipartite planar graph whose shortest negative cycles are of length at least $8$, we get a homomorphic image $\hat{G}$ with a planar embedding where all faces are (negative) 8-cycles. Applying the Euler formula on this graph, we have $|E(G)|\leq \frac{4}{3}(|V(G)|-2)$. By taking $\hat{G}$ to be a smallest signed bipartite planar graph which does not map to $C_{\Four}$ and whose shortest negative cycle is of length 8, we conclude that on the one hand $\hat{G}$ must be $C_{\Four}$-critical, and thus, by Theorem~\ref{thm:densitystronger}, has at least $\frac{4}{3}|V(G)|$ edges, but on the other hand, by the argument above, it has at most $\frac{4}{3}(|V(G)|-2)$ edges. This contradiction is a proof that:

\begin{theorem}\label{thm:girth8}
Any signed bipartite planar graph of negative girth at least 8 maps to $C_{\Four}$.
\end{theorem}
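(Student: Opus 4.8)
The plan is to argue by contradiction, pitting the edge lower bound of Theorem~\ref{thm:densitystronger} against an edge upper bound obtained from Euler's formula. Suppose the statement fails. Among all signed bipartite planar graphs of negative girth at least $8$ that do not map to $C_{\Four}$, let $\hat{G}=(G,\sigma)$ be one minimizing first $|V(G)|$ and then, subject to that, $|E(G)|$.

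First I would check that $\hat{G}$ is $C_{\Four}$-critical. Being bipartite with negative girth at least $8$, it satisfies $g_{ij}(\hat{G})\geq g_{ij}(C_{\Four})$ for all $ij\in\mathbb{Z}_2^2$, and by assumption it does not map to $C_{\Four}$. Any vertex-deleted subgraph is again bipartite planar of negative girth at least $8$ but has fewer vertices, so it maps to $C_{\Four}$ by vertex-minimality; any edge-deleted subgraph keeps the vertex count while lowering the edge count, so it maps by edge-minimality. Hence every proper subgraph maps to $C_{\Four}$, so $\hat{G}$ is $C_{\Four}$-critical, and by Lemma~\ref{lem:2connected} it is $2$-connected, which guarantees that in any plane embedding each face is bounded by a cycle. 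Since $\hat{W}$ has negative girth $4$, we have $\hat{G}\neq\hat{W}$, so Theorem~\ref{thm:densitystronger} applies and gives $|E(G)|\geq\frac{4}{3}|V(G)|$.

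Next I would fix a plane embedding of $\hat{G}$ and show that every face is a negative $8$-cycle. If some face $F$ were not, then the folding Lemma~\ref{lem:folding} with $k=4$ would produce a homomorphic image identifying two vertices of $F$ at distance $2$, still planar, bipartite, and of negative girth at least $8$; since negative girth at least $8$ forbids $C_{\!\scriptscriptstyle -2}$, any parallel edges created share a sign and can be merged, leaving a simple signed graph on one fewer vertex. As $\hat{G}$ maps onto this image while $\hat{G}\not\to C_{\Four}$, the image does not map to $C_{\Four}$ either, contradicting the minimality of $|V(G)|$. Therefore all faces are negative $8$-cycles. Counting edge–face incidences gives $2|E(G)|=8|F|$, so Euler's formula $|V(G)|-|E(G)|+|F|=2$ yields $|E(G)|=\frac{4}{3}\bigl(|V(G)|-2\bigr)$. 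Combined with the lower bound this forces $\frac{4}{3}|V(G)|\leq\frac{4}{3}\bigl(|V(G)|-2\bigr)$, i.e. $0\leq-\frac{8}{3}$, which is absurd and closes the contradiction.

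The hard part will not be any single clever idea but the bookkeeping in the folding step: one must confirm that every application of Lemma~\ref{lem:folding} keeps us inside the class of \emph{simple} signed bipartite planar graphs of negative girth at least $8$, strictly decreases the vertex count, and transmits non-mappability to $C_{\Four}$ along the homomorphism. The only other point needing care is the choice of minimality measure, so that $C_{\Four}$-criticality, and hence $2$-connectivity and cyclic faces, is genuinely in hand at the moment Euler's formula is invoked.
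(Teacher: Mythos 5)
Your proposal is correct and follows essentially the same route as the paper: a smallest counterexample is $C_{\Four}$-critical, hence has at least $\frac{4}{3}|V(G)|$ edges by Theorem~\ref{thm:densitystronger}, while the folding Lemma~\ref{lem:folding} plus Euler's formula cap the edge count at $\frac{4}{3}\bigl(|V(G)|-2\bigr)$, a contradiction. Your write-up merely makes explicit some bookkeeping the paper leaves implicit (that folding transmits non-mappability and strictly drops the vertex count, and that $2$-connectivity makes the face count valid), which is a welcome but not substantively different refinement.
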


We now claim that the condition of shortest negative cycles being of length at least 8 in this theorem is tight. 

For this, it would be enough to build a signed planar (simple) graph $(G, \sigma)$ which is not $\{\pm1, \pm2\}$-colorable. Then, by Theorem~\ref{thm:T_k-2}, $T_2(G, \sigma)$ is a signed bipartite planar graph which does not map to $C_{\Four}$. Furthermore, that $G$ is simple implies that $T_2(G, \sigma)$ has no cycle of length smaller than 6. 

That every signed planar graph is $\{\pm1, \pm2\}$-colorable was conjectured in \cite{MRS16}. This conjecture was disproved in \cite{KN21}, we refer to \cite{NP21+} for a direct proof. Thus we have:

\begin{theorem}
There exists a bipartite planar graph $G$ of girth 6 with a signature $\sigma$ such that $(G,\sigma) \not\to C_{\Four}.$
\end{theorem}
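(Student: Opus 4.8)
The plan is to reduce the existence of a girth-6 signed bipartite planar graph that does not map to $C_{\Four}$ to the existence of a signed planar \emph{simple} graph that is not $\{\pm1,\pm2\}$-colorable, and then to invoke the disproof of the Máčajová--Raspaud--Škoviera conjecture from \cite{KN21} (or the direct construction of \cite{NP21+}). The crucial bridge is the operation $T_2$ together with Theorem~\ref{thm:T_k-2} in the case $2k=4$, which states that a signed (multi)graph $\hat{G}$ admits an $X_4$-coloring (that is, a $\{\pm1,\pm2\}$-coloring) if and only if $T_2(\hat{G})\to C_{\Four}$.

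First I would take $(G,\sigma)$ to be a signed planar simple graph that is \emph{not} $\{\pm1,\pm2\}$-colorable; its existence is precisely what \cite{KN21} and \cite{NP21+} provide, so this step is assumed rather than constructed here. Next I would form $T_2(G,\sigma)$, the signed graph obtained by subdividing each edge into a path of length $2$ and assigning the sign of each such path to be $-\sigma(e)$. By the definition of $T_l$, subdividing every edge once produces a bipartite underlying graph (one part being $V(G)$, the other being the new subdivision vertices), so $T_2(G,\sigma)$ is a signed \emph{bipartite} graph; moreover subdivision preserves planarity, so it is planar. Applying the ``only if'' direction of Theorem~\ref{thm:T_k-2}: since $(G,\sigma)$ has no $X_4$-coloring, $T_2(G,\sigma)$ does not map to $C_{\Four}$.

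It then remains to verify the girth claim, namely that $T_2(G,\sigma)$ has girth exactly $6$ (in particular no cycle shorter than $6$). Because $G$ is a simple graph, its shortest cycle has length at least $3$; subdividing every edge once doubles the length of every cycle of $G$, so the shortest cycle of $T_2(G,\sigma)$ has length at least $6$, and a triangle of $G$ (which exists in a typical non-$\{\pm1,\pm2\}$-colorable example, or can be arranged) yields a $6$-cycle, giving girth exactly $6$. The key point is simply that simplicity of $G$ forbids the length-$2$ and length-$4$ cycles that would otherwise arise. This establishes a signed bipartite planar graph of girth $6$ not mapping to $C_{\Four}$, completing the argument.

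The main obstacle is not in the logical structure, which is a short and direct chain of implications, but in having the input object at all: the entire weight of the theorem rests on the fact that there \emph{exists} a signed planar simple graph with no $\{\pm1,\pm2\}$-coloring, i.e.\ on the disproof of the conjecture of \cite{MRS16}. Since we are permitted to assume results stated earlier, I would cite \cite{KN21} and \cite{NP21+} for this input and treat the reduction via $T_2$ as the content of the proof. One small care point worth flagging is ensuring that the example $(G,\sigma)$ can be taken \emph{simple} (not a multigraph), since Theorem~\ref{thm:T_k-2} is stated for signed multigraphs but the girth-$6$ conclusion requires simplicity; this is exactly why the statement emphasizes ``$G$ is simple implies that $T_2(G,\sigma)$ has no cycle of length smaller than $6$.''
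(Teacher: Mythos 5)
Your proposal is correct and takes essentially the same route as the paper's own proof: both reduce the statement, via the operation $T_2$ and Theorem~\ref{thm:T_k-2} with $2k=4$, to the existence of a signed planar simple graph with no $\{\pm1,\pm2\}$-coloring, which is supplied by the disproof of the conjecture of \cite{MRS16} in \cite{KN21} (see \cite{NP21+} for a direct construction), with simplicity of $G$ ensuring that $T_2(G,\sigma)$ has no cycle of length less than $6$.
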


The smallest examples we have built so far in this way have $150$ vertices. However, such examples have the extra property that vertices on one part of the (bipartite) graph are all of degree 2. Perhaps simpler examples can be built which do not satisfy this property.

It is proved in \cite{DFMOP20} that $C_{\Four}$-coloring problem even when restricted to the class of signed (bipartite) planar graphs remains an NP-complete problem. Thus, one does not expect to find an efficient classification of signed bipartite planar graphs which map to $C_{\Four}$. However, some strong sufficient conditions could be provided. One such condition is based on the restatement of the 4CT given in Theorem~\ref{thm:4CT-restated}. Another is Theorem~\ref{thm:girth8} of this work that shows no negative cycle of length $2,4,6$ is a sufficient condition. As a generalization of Theorem~\ref{thm:4CT-restated} (the  4CT) which also captures essential cases of Theorem~\ref{thm:girth8}, we propose the following:

\begin{conjecture}
Let $G$ be a bipartite planar graph of girth at least 6. Let $\sigma$ be a signature on $G$ such that in $(G, \sigma)$ all 6-cycles are of the same sign. Then $(G, \sigma)\to C_{\Four}$. 
\end{conjecture}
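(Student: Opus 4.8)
The plan is to reduce the conjecture to the two results already at our disposal, Theorem~\ref{thm:girth8} and the Four-Color Theorem (in the form of Corollary~\ref{cor:T_2} and Theorem~\ref{thm:4CT-restated}). First observe that we may assume the girth is exactly $6$: if $(G,\sigma)$ is bipartite of girth at least $8$ then it has no cycle of length $2,4,6$ at all, so its negative girth is at least $8$ and Theorem~\ref{thm:girth8} already gives $(G,\sigma)\to C_{\Four}$. So assume $G$ has $6$-cycles, all of the same sign, and split into two cases according to that common sign. If all $6$-cycles are positive then, $G$ being bipartite of girth $6$, there is no negative cycle of length $2$, $4$ or $6$; hence the negative girth is at least $8$ and again Theorem~\ref{thm:girth8} finishes the argument. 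Thus the entire content of the conjecture is concentrated in the case where \emph{all $6$-cycles are negative}; call this Case A. Note that Case A genuinely contains Theorem~\ref{thm:4CT-restated}: for a planar graph $H$ the signed graph $T_2(H,+)$ is bipartite, planar, of girth $6$, and each of its $6$-cycles (coming from a triangle of $H$) is a product of three negative $2$-paths, hence negative.

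For Case A the density bound of Theorem~\ref{thm:densitystronger} is hopeless on its own: the signed graph $T_2(K_5,+)$ is bipartite of girth $6$, all of its $6$-cycles are negative, it is $C_{\Four}$-critical, and it has exactly $\frac{4}{3}$ times as many edges as vertices --- it is only its \emph{non}-planarity that prevents it from being a counterexample. Hence planarity must be used in an essential way, and I would do so through the folding Lemma~\ref{lem:folding}. Take a counterexample $(G,\sigma)$ in Case A with the fewest vertices, fix a plane embedding, and apply Lemma~\ref{lem:folding} with $k=3$ repeatedly, identifying distance-$2$ vertices of every face that is not a negative $6$-cycle. Since each identification is a homomorphism, the resulting signed graph $\hat{G}^{*}$ is a homomorphic image of $(G,\sigma)$ that is still bipartite and planar, and it suffices to prove $\hat{G}^{*}\to C_{\Four}$, because then $(G,\sigma)\to\hat{G}^{*}\to C_{\Four}$. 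After folding, every face of $\hat{G}^{*}$ is a negative $6$-cycle.

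It then remains to show that a bipartite plane graph all of whose faces are negative $6$-cycles maps to $C_{\Four}$. The cleanest route I would attempt is to recognise such a graph as being dominated by the $4$CT: construct an auxiliary planar graph $H$ by contracting faces appropriately. When one colour class consists of degree-$2$ vertices this is literally the inverse of the operation $T_2$, giving $\hat{G}^{*}=T_2(H,+)$ with $H$ a planar triangulation, whence Corollary~\ref{cor:T_2} and the $4$CT apply directly; in the general case the aim is to produce a homomorphism $\hat{G}^{*}\to T_2(H,+)$ for a suitable planar $H$. Alternatively one can try to run the coloring-to-$C_{\Four}$ translation of Lemma~\ref{lem:T_k-2} in place, using a $4$-coloring of $H$ supplied by the $4$CT together with the duality of Theorem~\ref{thm:C4dual} to build the switching under which no positive edge meets negative edges at both ends.

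I expect two steps to be the real obstacles. The first is that Lemma~\ref{lem:folding} is tailored to preserving the \emph{negative} girth, not the stronger hypothesis of Case A: a folding step might create a \emph{positive} $6$-cycle (violating ``all $6$-cycles negative'') or a short positive cycle (violating girth $6$), so one must either strengthen the folding lemma to respect the sign of $6$-cycles or argue that every newly created $6$-cycle is forced to be negative. The second, and deeper, obstacle is that a bipartite plane graph with all faces negative $6$-cycles need not literally be of the form $T_2(H,+)$ --- both colour classes may contain vertices of degree larger than $2$ --- so the reduction to the $4$CT is not automatic and requires a genuine structural argument identifying the right planar minor. Since Case A provably contains Theorem~\ref{thm:4CT-restated}, no self-contained elementary proof can be expected: any successful argument must invoke the Four-Color Theorem.
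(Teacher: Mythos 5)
You should note first that the statement you were asked to prove is not proved in the paper at all: it is posed there as a conjecture, and the paper itself points out that it generalizes Theorem~\ref{thm:4CT-restated}, i.e., the Four-Color Theorem. So there is no ``paper's proof'' to compare against, and no short self-contained argument should be expected. Your proposal is, by your own admission, a plan rather than a proof, and the two obstacles you flag are exactly where it breaks down. The parts you do establish are correct and worthwhile: the reduction to the case where all $6$-cycles are negative is valid (if there are no $6$-cycles, or all of them are positive, then bipartiteness and girth at least $6$ force negative girth at least $8$, and Theorem~\ref{thm:girth8} applies), and your example $T_2(K_5,+)$ --- bipartite, girth $6$, all $6$-cycles negative, $C_{\Four}$-critical by Lemma~\ref{lem:T_k-2}, of edge-density exactly $\frac{4}{3}$, and non-planar --- is a clean demonstration that the density machinery of Theorem~\ref{thm:densitystronger} cannot settle the remaining case on its own, so planarity must enter in an essential way.

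The gaps, however, are genuine and unresolved. First, Lemma~\ref{lem:folding} with $2k=6$ preserves only the property of having no negative cycle of length less than $6$; the identifications it performs may create positive $4$-cycles and positive $6$-cycles, so after folding you are no longer within the hypotheses of the conjecture (girth at least $6$, all $6$-cycles of one sign). Your post-folding target --- that every signed bipartite plane graph whose faces are all negative $6$-cycles maps to $C_{\Four}$ --- is therefore not a special case of the conjecture but a formally stronger, equally unproven claim, and nothing in the paper supports it. Second, even granting that target, such a graph need not be of the form $T_2(H,+)$, since both sides of the bipartition can contain vertices of degree at least $3$; you give no construction of the auxiliary planar graph $H$, nor of a homomorphism $\hat{G}^{*}\to T_2(H,+)$, and this structural step is precisely where all the difficulty lies, because (as you yourself observe) any successful argument must contain the Four-Color Theorem. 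So your proposal correctly maps the terrain and makes no false claims, but it closes neither gap; the statement remains open, exactly as the paper intends.
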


We note that, while one may use Lemma~\ref{lem:folding} to reduce facial 4-cycles of a signed graph which is the subject of Theorem~\ref{thm:girth8}, there could be separating 4-cycles in a signed bipartite planar graph to which this theorem may apply. Therefore, the conjecture does not capture all cases to which Theorem~\ref{thm:girth8} applies.

As a final remark, we would like to point out that some of the results in this work can be restated using the language of the circular coloring of signed graphs which is recently developed in \cite{NWZ21}.\\

{\bf Acknowledgment.} This work is supported by the ANR (France) project HOSIGRA (ANR-17-CE40-0022). It has also received funding from the European Union's Horizon 2020 research and innovation program under the Marie Sklodowska-Curie grant agreement No 754362. 

We would like to thank the referees and E. Sopena for carefully reading this work and for helping us to improve.

\bibliographystyle{acm}
\bibliography{CriticalGraph}

\end{document}